\newtheorem{remark}{Remark}
\newtheorem{problem}{Problem}
\numberwithin{theorem}{section}
\newcommand{\TheTitle}{A One-Field Energy-conserving Monolithic Fictitious Domain Method for Fluid-Structure Interactions} 
\newcommand{\TheAuthors}{Yongxing Wang, Peter K. Jimack, and Mark A. Walkley}
\newcommand{\AbrTitle}{A One-Field Energy-conserving Monolithic FDM for FSI}
\headers{\AbrTitle}{\TheAuthors}
\title{{\TheTitle}\thanks{Submitted to the editors on May 10th, 2017.}}
\author{
  Yongxing Wang\thanks{School of Computing, University of Leeds, Leeds, UK, LS2 9JT
%    (\email{scywa@leeds.ac.uk}, \url{http://www.imag.com/\string~ddoe/}).}
(\email{jungsirwang@gmail.com}, \email{scywa@leeds.ac.uk}).}
  \and
  Peter K. Jimack\thanks{School of Computing, University of Leeds, Leeds, UK, LS2 9JT (\email{P.K.Jimack@leeds.ac.uk},
    \email{M.A.Walkley@leeds.ac.uk}).}
  \and
  Mark A. Walkley\footnotemark[3]
}
\begin{document}
\nolinenumbers
\maketitle

% REQUIRED
\begin{abstract}
In this article, we analyze and numerically assess a new fictitious domain method for fluid-structure interactions in two and three dimensions. The distinguishing feature of the proposed method is that it only solves for one velocity field for the whole fluid-structure domain; the interactions remain decoupled until solving the final linear algebraic equations. To achieve this the finite element procedures are carried out separately on two different meshes for the fluid and solid respectively, and the assembly of the final linear system brings the fluid and solid parts together via an isoparametric interpolation matrix between the two meshes. In this article, an implicit version of this approach is introduced. The property of energy conservation is proved, which is a strong indication of stability. The solvability and error estimate for the corresponding stationary problem (one time step of the transient problem) are analyzed. Finally, 2D and 3D numerical examples are presented to validate the conservation properties.
\end{abstract}

% REQUIRED
\begin{keywords}
Fluid-Structure Interactions, Fictitious Domain Method, Monolithic Method, One-Field Fictitious Domain Method, Energy-Conserving Scheme.
\end{keywords}

% REQUIRED
\begin{AMS}
  97N40, 74S05, 76M10
\end{AMS}

%%%%%%%%%%%%%%%%%%%%%%%%%%%%%%%%%%%%
\section{Introduction}
\label{sec:introduction}
Three major questions arise when considering a finite element method for the problems of Fluid-Structure Interactions (FSI): (1) what kind of meshes are used (interface fitted or unfitted); (2) how to couple the fluid-structure interactions (monolithic/fully-coupled or partitioned/segregated); (3) what variables are solved (velocity and/or displacement). Combinations of the answers of these questions lead to different types of numerical methods. For examples, \cite{Degroote_2009, K_ttler_2008}  solve for fluid velocity and solid displacement sequentially (partitioned/segregated) using an Arbitrary Lagrangian-Eulerian (ALE) fitted mesh. Whereas \cite{Heil_2004, Heil_2008, Muddle_2012} use an ALE fitted mesh to solve for fluid velocity and solid displacement simultaneously (monolithic/fully-coupled) with a Lagrange Multiplier to enforce the continuity of velocity/displacement on the interface. The Immersed Finite Element Method (IFEM) \cite{Boffi_2015, peskin2002immersed, Wang_2011, Wang_2009,Wang_2013,Zhang_2007,zhang2004immersed} and the Fictitious Domain Method (FDM) \cite{baaijens2001fictitious,Boffi_2016,Glowinski_2001,Hesch_2014,Kadapa_2016,Yu_2005} use two meshes to represent the fluid and solid separately. Although IFEM could be monolithic \cite{Boffi_2015}, the classical IFEM only solves for velocity, while the solid information is arranged on the right-hand side of the fluid equation as a known force term. Although the FDM may be partitioned \cite{Yu_2005}, usually the FDM approach solves for both velocity in the whole domain (fluid plus solid) and displacement of the solid simultaneously via a distributed Lagrange multiplier (DLM) to enforce the consistency of velocity/displacement in the overlapped solid domain.

In the case of one-field and monolithic numerical methods for FSI problems, \cite{Auricchio_2014} introduces a 1D model using a one-field FD formulation based on two meshes, and \cite{Hecht_2017,pironneau2016energy} introduces an energy stable monolithic method (in 2D) based on one Eulerian mesh and discrete remeshing. In a previous study \cite{Wang_2017}, we present a one-field monolithic fictitious domain method which is straightforward to implement in both 2D and 3D. The main features of this method are: (1) only one velocity field is solved in the whole domain, based upon the use of an appropriate $L^2$ projection; (2) the fluid and solid equations are solved monolithically. In this paper, we construct an implicit version of this one-field fictitious domain and, developing the ideas from \cite{Auricchio_2014,pironneau2016energy}, we analyze properties of energy stability and error estimate of the proposed approach.

The paper is organized as follows. Control equations are introduced in \cref{sec:Control equations}. Sections \ref{sec:wfotcl} and \ref{sec:ecotcl} discuss the weak form and the energy estimate in the continuous case respectively. Sections \ref{sec:wfadit} and \ref{sec:ecadit} discuss the weak form and the energy estimate after time discretization respectively. In \cref{sec:aotstp}, the corresponding stationary problem (one time step of the transient problem) is analyzed after time discretization and further after space discretization. In \cref{sec:implementation}, implementation details are presented. Numerical examples are given in \cref{sec:numerical_exs}, and conclusions are presented in \cref{sec:conclusions}.

%%%%%%%%%%%%%%%%%%%%%%%%%%%%%%%%%%%%%%
\section{Control equations}
\label{sec:Control equations}

In the following context, $\Omega_t^f\subset\mathbb{R}^d$ and $\Omega_t^s\subset\mathbb{R}^d$ with $d=2,3$ denote the fluid and solid domain respectively which are time dependent regions as shown in \cref{fig1:Schematic diagram of FSI}. $\Omega=\Omega_t^f \cup \Omega_t^s $ is a fixed domain (with outer boundary $\Gamma$) and $\Gamma_t=\partial\Omega_t^f\cap\partial\Omega_t^s$ is the moving interface between fluid and solid. We denote by ${\bf X}$ the reference (material) coordinates of the solid, by ${\bf x}={\bf x}(\cdot,t)$ the current coordinates of the solid, and by ${\bf x}_0$ is the initial coordinates of the solid. We assume ${{\bf x}(t):\Omega_{\bf X}^s\to\Omega_t^s}$ is one-to-one and invertible with Lipschitz inverse, i.e.: for all $t\in\left[0, T\right]$ and ${\bf s}_1, {\bf s}_2\in\Omega_{\bf X}^s$, $\exists$ $c_0>0$ such that $\|{{\bf x}({\bf s}_1,t)}-{{\bf x}({\bf s}_2,t)}\|_2\ge c_0\|{\bf s}_1-{\bf s}_2\|_2$.

\begin{figure}[h!]
	\centering
	\includegraphics[width=3in,angle=0]{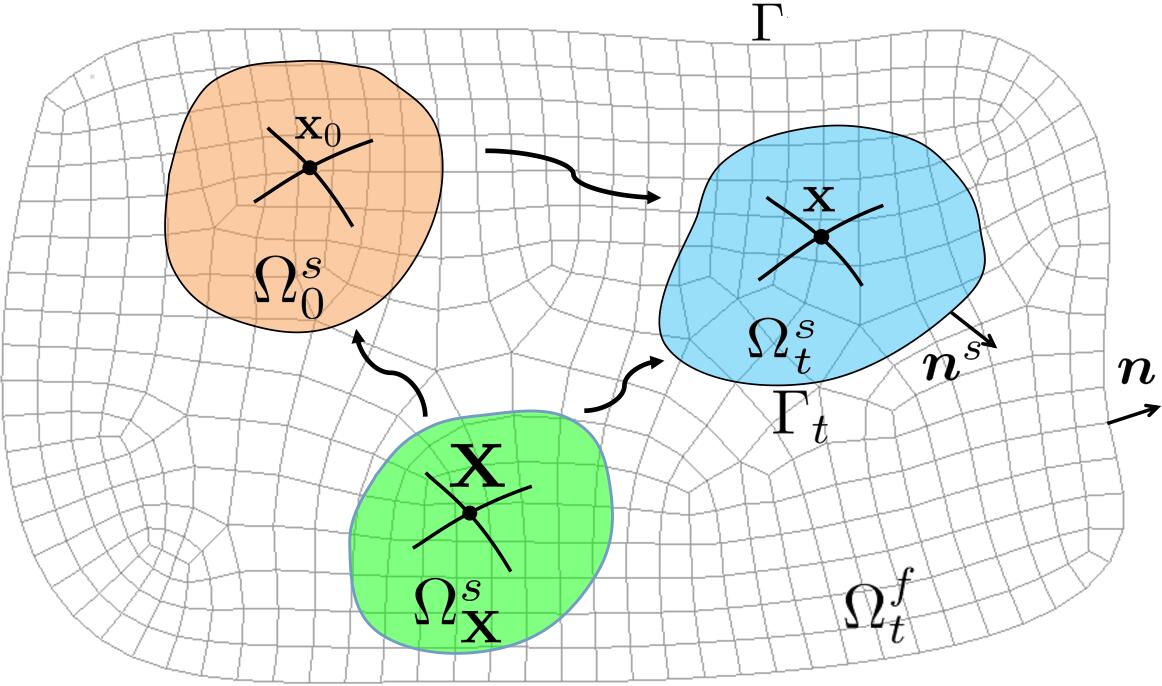}
	\caption {\scriptsize Schematic diagram of FSI, $\Omega=\Omega_t^f\cup \Omega_t^s$.} 
	\label{fig1:Schematic diagram of FSI}
\end{figure}

Let $\rho, {\bf u}, {\bm{\sigma}}$ denote the density, velocity and stress tensor respectively. We assume both an incompressible fluid and incompressible solid, then the conservation of momentum and conservation of mass take the same form as follows:

Momentum equation:
\begin{equation} \label{momentum_equation}
\rho\frac{d{\bf u}}{dt}
=\nabla \cdot {\bm\sigma},
\end{equation}
Continuity equation:
\begin{equation} \label{continuity_equation}
\nabla \cdot {\bf u}=0.
\end{equation}

Let superscripts $f$ and $s$ refer to the fluid and solid respectively, and ${\rm D}{\bf u}=\nabla {\bf u}+\nabla^{\rm T} {\bf u}$, then the constitutive equations may be expressed as follows:

Incompressible Newtonian fluid in $\Omega^f$:
\begin{equation} \label{constitutive_fluid}
{\bm\sigma}={\bm\sigma}^f=\nu^f {\rm D}{\bf u}^f-p^f{\bf I},
\end{equation}
Incompressible viscous-hyperelastic solid in $\Omega^s$ \cite{Boffi_2016}:
\begin{equation} \label{constitutive_solid}
{\bm\sigma}={\bm\sigma}^s={\bm\tau}^s+\nu^s {\rm D}{\bf u}^s-p^s{\bf I},
\end{equation}
where
\begin{equation}\label{expression_of_taus}
{\bm\tau}^s
=J^{-1}{\bf P}{\bf F}^{\rm T},
\end{equation}
with ${\bf P}=\frac{\partial\Psi\left({\bf F}\right)}{\partial{\bf F}}$ (${\bf P}_{ij}=\frac{\partial\Psi\left({\bf F}\right)}{\partial{ F}_{ij}}$) being the first Piola-Kirchhoff stress tensor and $\Psi\left({\bf F}\right)$ being the energy function for a hyperelastic solid material. $J=det_{\bf F}$ is the determinant of ${\bf F}$, where ${\bf F}=\frac{\partial {\bf x}}{\partial {\bf X}}=\frac{\partial {\bf x}}{\partial {\bf x}_0} \frac{\partial {\bf x}_0}{\partial {\bf X}}$=$\nabla_0{\bf x}\nabla_{\bf X}{\bf x}_0$ is deformation tensor of the solid. In the above, $\nu^f$ and $\nu^s$ are viscosity of the fluid and solid respectively, $p^f$ and $p^s$ are pressure of the fluid and solid respectively.

The system is complemented with the following boundary and initial conditions.
\begin{equation}\label{interfaceBC1}
{\bf u}^f={\bf u}^s\quad on \quad  \Gamma_t,
\end{equation}
\begin{equation}\label{interfaceBC2}
{\bf n}^s{\bm \sigma}^f= {\bf n}^s{\bm \sigma}^s\quad on \quad  \Gamma_t,
\end{equation}
\begin{equation}\label{homogeneous_boundary}
{\bf u}^f={\bf 0}\quad on \quad  \Gamma,
\end{equation}
\begin{equation} \label{initialcd_fluid}
\left. {\bf u}^f\right|_{t=0}={\bf u}_0^f,
\end{equation}
\begin{equation} \label{initialcd_solid}
\left. {\bf u}^s\right|_{t=0}={\bf u}_0^s.
\end{equation}
Other boundary conditions are possible on $\Gamma$ but \cref{homogeneous_boundary} are used here for simplicity.
%%%%%%%%%%%%%%%%%%%%%%%%%%%%%%%%%%%%%%%%%%%%%%%%%%%%%%%%%
\section{Weak form on the continuous level}
\label{sec:wfotcl}
In the following context, let $L^2(\omega)$ be the square integrable functions in domain $\omega$, endowed with norm $\left\|u\right\|_{0,\omega}^2=\int_\omega \left|u\right|^2$ ($u\in L^2(\omega)$). Let $H^1(\omega)=\left\{u:u, \nabla u\in L^2(\omega)\right\}$ with the norm denoted by $\left\|u\right\|_{1,\omega}^2=\left\|u\right\|_{0,\omega}^2+\left\|\nabla u\right\|_{0,\omega}^2$. We also denote by $H_0^1(\omega)$ the subspace of $H^1(\omega)$ whose functions have zero values on the boundary of $\omega$, and denote by $L_0^2(\omega)$ the subspace of $L^2(\omega)$ whose functions have zero mean value.

Let
$
{p}=\left \{ 
\begin{matrix}
{{p}^f \quad in \quad \Omega^f} \\
{{p}^s \quad in \quad \Omega^s} \\
\end{matrix}\right.
$.
Given ${\bf v}\in H_0^1(\Omega)^d$, we perform the following symbolic operations:
$$
\int_{\Omega_t^f}{\rm Eq.}\cref{momentum_equation}\cdot{\bf v}d{\bf x}
+\int_{\Omega_t^s}{\rm Eq.}\cref{momentum_equation}\cdot{\bf v}d{\bf x}.
$$

Integrating the stress terms by parts, the above operations, using constitutive equation \cref{constitutive_fluid} and \cref{constitutive_solid} and boundary condition \cref{interfaceBC2}, gives:
\begin{equation}\label{operation1}
\begin{split}
&\rho^f\int_{\Omega}\frac{d{\bf u}}{dt} \cdot{\bf v}d{\bf x}
+\frac{\nu^f}{2}\int_{\Omega}{\rm D}{\bf u}:{\rm D}{\bf v}d{\bf x}
-\int_{\Omega}p\nabla \cdot {\bf v}d{\bf x} \\
&+\rho^{\delta}\int_{\Omega_t^s}\frac{d{\bf u}}{dt}\cdot{\bf v}d{\bf x}
+\frac{\nu^{\delta}}{2}\int_{\Omega_t^s}{\rm D}{\bf u}:{\rm D}{\bf v}d{\bf x}
+\int_{\Omega_t^s}{\bm\tau}^s:\nabla{\bf v}d{\bf x}
=0,
\end{split}
\end{equation}
where $\rho^{\delta}=\rho^s-\rho^f$, $\nu^{\delta}=\nu^s-\nu^f$. Note that the integrals on the interface $\Gamma_t$ are cancelled out using boundary condition \cref{interfaceBC2}. This is not surprising because they are internal forces for the whole FSI system considered here. 

Substitute the expression of ${{\bm\tau}^s}$ \cref{expression_of_taus} into \cref{operation1} and transfer the integral of the last term to the reference coordinate system. Then, the following symbolic operations for $q\in L^2(\Omega)$,
$$
-\int_{\Omega_t^f}{\rm Eq.}\cref{continuity_equation}qd{\bf x}
-\int_{\Omega_t^s}{\rm Eq.}\cref{continuity_equation}qd{\bf x},
$$ 
lead to the weak form of the FSI system as follows.

\begin{problem}\label{problem:continuous_weak_form}
Find ${\bf u}(t)\in H_0^1(\Omega)^d$ and $p(t) \in L_0^2(\Omega)$, such that
\begin{equation}\label{weak_form1}
\begin{split}
&\rho^f\int_{\Omega}\frac{\partial{\bf u}}{\partial t} \cdot{\bf v}d{\bf x}
+\rho^f \int_{\Omega}\left({\bf u}\cdot\nabla\right){\bf u}\cdot{\bf v}d{\bf x}
+\frac{\nu^f}{2}\int_{\Omega}{\rm D}{\bf u}:{\rm D}{\bf v}d{\bf x}
-\int_{\Omega}p\nabla \cdot {\bf v}d{\bf x} \\
&+\rho^{\delta}\int_{\Omega_t^s}\frac{\partial{\bf u}}{\partial t}\cdot{\bf v}d{\bf x} 
+\frac{\nu^{\delta}}{2}\int_{\Omega_t^s}{\rm D}{\bf u}:{\rm D}{\bf v}d{\bf x}
+\int_{\Omega_{\bf X}^s}{\bf P}({\bf F}):\nabla_{\bf X}{\bf v}d{\bf X}
=0,
\end{split}
\end{equation}
$\forall {\bf v}\in H_0^1(\Omega)^d$ and
\begin{equation}\label{weak_form2}
-\int_{\Omega} q\nabla \cdot {\bf u}d{\bf x}=0,
\end{equation}		
$\forall q \in L^2(\Omega)$.
\end{problem}

\begin{remark}
Because domain $\Omega$ is stationary (the Eulerian description will be used) and $\Omega_t^s$ is transient which will be updated by its own velocity (the updated Lagrangian description), there is a convection term from the total derivative of time in $\Omega$, but there is no convection term in $\Omega_t^s$.
\end{remark}

%%%%%%%%%%%%%%%%%%%%%%%%%%%%%%%%%%%%%%%%%section 3
\section{Energy conservation on the continuous level}
A property of energy conservation for the weak forms \cref{weak_form1} and \cref{weak_form2} will be proved in this section.
\label{sec:ecotcl}
\begin{lemma}\label{lemma_potential_energy}
Assume the solid energy function $\Psi(\cdot)\in C^1$ over the set of second order tensors, then
\begin{equation}\label{potential_energy_relation}
\int_0^t\int_{\Omega_{\bf X}^s}{\bf P}:{\nabla}_{\bf X}{\bf u}d{\bf X}
=\int_{\Omega_{\bf X}^s}\Psi(\bf F)d{\bf X}
\end{equation}
\end{lemma}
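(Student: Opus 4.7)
The plan is to exploit the fact that the first Piola--Kirchhoff stress is defined as the gradient of the stored energy function $\Psi$ with respect to the deformation gradient ${\bf F}$, so the integrand on the left-hand side is essentially a total time derivative of $\Psi({\bf F})$. Once that identification is made, the result follows by interchanging time and space integration and by using the standard normalization $\Psi({\bf F}(0))=\Psi({\bf I})=0$ at the undeformed reference configuration.

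Concretely, I would proceed in three steps. First, because $\Omega_{\bf X}^s$ is the reference (time-independent) domain and ${\bf u}$ is the material velocity, in Lagrangian coordinates one has ${\bf u} = \partial{\bf x}/\partial t$, hence
\begin{equation*}
\nabla_{\bf X}{\bf u}\;=\;\nabla_{\bf X}\frac{\partial {\bf x}}{\partial t}\;=\;\frac{\partial}{\partial t}\nabla_{\bf X}{\bf x}\;=\;\frac{\partial {\bf F}}{\partial t}.
\end{equation*}
Second, since $\Psi\in C^1$ and ${\bf P}=\partial\Psi/\partial{\bf F}$, the chain rule for a function of a matrix argument gives
\begin{equation*}
\frac{d}{dt}\Psi({\bf F})\;=\;\frac{\partial\Psi({\bf F})}{\partial{\bf F}}:\frac{\partial{\bf F}}{\partial t}\;=\;{\bf P}:\nabla_{\bf X}{\bf u},
\end{equation*}
so the integrand is a perfect time derivative. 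Third, since the reference domain $\Omega_{\bf X}^s$ does not depend on $t$, Fubini's theorem lets me exchange the order of integration, and a one-dimensional integration in time yields
\begin{equation*}
\int_0^t\int_{\Omega_{\bf X}^s}{\bf P}:\nabla_{\bf X}{\bf u}\,d{\bf X}\,ds
=\int_{\Omega_{\bf X}^s}\bigl[\Psi({\bf F}(t))-\Psi({\bf F}(0))\bigr]d{\bf X}.
\end{equation*}
Invoking the standard normalization of hyperelastic stored-energy functions at the undeformed state, ${\bf F}(0)={\bf I}$ with $\Psi({\bf I})=0$, removes the boundary term and delivers the claimed identity.

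I do not anticipate a substantive obstacle here: the argument is essentially a single chain-rule application, and the $C^1$ hypothesis on $\Psi$ is precisely what is needed to legitimize it. The only point requiring a little care is the implicit normalization at $t=0$; if the paper does not wish to assume $\Psi({\bf I})=0$ a priori, then the lemma should really be read modulo a time-independent constant $\int_{\Omega_{\bf X}^s}\Psi({\bf F}(0))d{\bf X}$, which plays no role in the subsequent energy identity since it drops out upon differentiation in time.
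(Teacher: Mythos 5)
Your proof is correct and follows essentially the same route as the paper: both reduce the integrand to the total time derivative $\frac{d}{dt}\Psi({\bf F})={\bf P}:\nabla_{\bf X}{\bf u}$ via the chain rule and the identity $\nabla_{\bf X}{\bf u}=\partial{\bf F}/\partial t$, then integrate in time. Your explicit remark about the normalization $\Psi({\bf F}(0))=0$ is a point the paper leaves implicit, and is indeed satisfied by the neo-Hookean energy \cref{expression_of_psi} used later.
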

\begin{proof}
Using the fact ${\bf A:C}=tr_{{\bf AC}^{\rm T}}$ (${\bf A}$ and ${\bf C}$ are arbitrary matrices), we have:
\begin{equation*}
\begin{split}
\frac{d}{dt}\int_{\Omega_{\bf X}^s}\Psi({\bf F})d{\bf X}
=\int_{\Omega_{\bf X}^s}\frac{\partial\Psi}{\partial{\bf F}}:\frac{d{\bf F}}{dt}d{\bf X}
=\int_{\Omega_{\bf X}^s}{\bf P}:\frac{d}{dt}\left({\bf I}+\nabla_{\bf X}{\bf d}\right)d{\bf X}
=\int_{\Omega_{\bf X}^s}{\bf P}:\nabla_{\bf X}{\bf u}d{\bf X},
\end{split}
\end{equation*}
where ${\bf d}$ is displacement of the solid at time $t$ in the above.
\end{proof}

\begin{lemma}\label{lemma_convection_zero}
If $\left({\bf u}, p\right)$ is the solution pair of \cref{problem:continuous_weak_form}, then
\begin{equation}\label{convection_zero}
\int_{\Omega}\left({\bf u}\cdot\nabla\right){\bf u}\cdot{\bf u}d{\bf x}=0.
\end{equation}
\end{lemma}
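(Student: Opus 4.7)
The plan is to reduce the trilinear form to an integral involving $\nabla\cdot{\bf u}$ plus a boundary contribution, then kill both by the incompressibility constraint \cref{weak_form2} and the homogeneous boundary condition \cref{homogeneous_boundary}. The key algebraic identity is the rewriting
\begin{equation*}
\left({\bf u}\cdot\nabla\right){\bf u}\cdot{\bf u}
= u_j\,\partial_j u_i\,u_i
= \tfrac{1}{2}\,u_j\,\partial_j\bigl(u_i u_i\bigr)
= \tfrac{1}{2}\,{\bf u}\cdot\nabla\bigl(|{\bf u}|^2\bigr),
\end{equation*}
valid pointwise a.e. for ${\bf u}\in H_0^1(\Omega)^d$.

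Next I would integrate by parts on $\Omega$:
\begin{equation*}
\tfrac{1}{2}\int_\Omega {\bf u}\cdot\nabla\bigl(|{\bf u}|^2\bigr)\,d{\bf x}
= -\tfrac{1}{2}\int_\Omega (\nabla\cdot{\bf u})\,|{\bf u}|^2\,d{\bf x}
+ \tfrac{1}{2}\int_\Gamma |{\bf u}|^2\,{\bf u}\cdot{\bf n}\,dS.
\end{equation*}
The boundary integral vanishes because ${\bf u}={\bf 0}$ on $\Gamma$ by \cref{homogeneous_boundary}. For the volume integral, \cref{weak_form2} with $q\in L^2(\Omega)$ arbitrary yields $\nabla\cdot{\bf u}=0$ almost everywhere in $\Omega$, so this term also vanishes, giving \cref{convection_zero}.

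The only mild technicality is justifying the integration by parts: the natural test in \cref{weak_form2} would be $q=|{\bf u}|^2$, which sits in $L^1(\Omega)$ rather than $L^2(\Omega)$ for a general $H^1$ field in dimension $d=3$. I would handle this either by assuming enough regularity so that $|{\bf u}|^2\in L^2$ (e.g.\ ${\bf u}\in L^4(\Omega)^d$, which holds for $d\le 4$ by Sobolev embedding of $H_0^1$), or by a standard density argument: approximate ${\bf u}$ in $H_0^1(\Omega)^d$ by smooth, divergence-free, compactly supported fields, carry out the identity exactly on the smooth level, and pass to the limit using continuity of the trilinear form on $H_0^1\times H_0^1\times H_0^1$ for $d\le 3$. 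This is the only non-routine point; the algebraic manipulation itself is immediate.
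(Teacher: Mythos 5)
Your proof is correct and follows essentially the same route as the paper: rewrite the trilinear term so that integration by parts produces a boundary integral (killed by \cref{homogeneous_boundary}) and a $\nabla\cdot{\bf u}$ term (killed by \cref{weak_form2}), the only cosmetic difference being that you use the identity $({\bf u}\cdot\nabla){\bf u}\cdot{\bf u}=\tfrac12\,{\bf u}\cdot\nabla|{\bf u}|^2$ directly where the paper uses the $\nabla\cdot({\bf u}\otimes{\bf u})$ symmetrization trick. Your closing remark on the integrability of $|{\bf u}|^2$ as a test function is a legitimate technicality that the paper passes over in silence, and your proposed fixes are standard and adequate.
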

\begin{proof}
First,
\begin{equation*}
\int_{\Omega}\left({\bf u}\cdot\nabla\right){\bf u}\cdot{\bf u}d{\bf x}
=\int_{\Omega}\nabla\left({\bf u}\otimes{\bf u}\right)\cdot{\bf u}d{\bf x}
-\int_{\Omega}\left|{\bf u}\right|^2\nabla\cdot{\bf u}.
\end{equation*}
Integrate by parts:
\begin{equation*}
\int_{\Omega}\nabla\left({\bf u}\otimes{\bf u}\right)\cdot{\bf u}d{\bf x}
=\int_{\Gamma}\left|{\bf u}\right|^2{\bf u}\cdot {\bf n}
-\int_{\Omega}\left({\bf u}\cdot\nabla\right){\bf u}\cdot{\bf u}d{\bf x}.
\end{equation*}
According to the boundary condition \cref{homogeneous_boundary} and equation \cref{weak_form2}, we have
\begin{equation*}
\int_{\Gamma}\left|{\bf u}\right|^2{\bf u}\cdot {\bf n}	
=\int_{\Omega}\left|{\bf u}\right|^2\nabla\cdot{\bf u}
=0,
\end{equation*}
which further indicates equation \cref{convection_zero}.
\end{proof}

\begin{proposition} [Energy Conservation on the Continuous Level] Let $\left({\bf u}, p\right)$ be the solution pair of \cref{problem:continuous_weak_form}, then
\begin{equation}\label{energy balance}
\begin{split}
&\frac{\rho^f}{2}\int_{\Omega}|{\bf u}|^2d{\bf x}
+\frac{\nu^f}{2}\int_0^t\int_{\Omega}{\rm D}{\bf u}:{\rm D}{\bf u}d{\bf x} \\
+&\frac{\rho^\delta}{2}\int_{\Omega_t^s}|{\bf u}|^2d{\bf x}
+\frac{\nu^\delta}{2}\int_0^t\int_{\Omega_t^s}{\rm D}{\bf u}:{\rm D}{\bf u}d{\bf x}
+\int_{\Omega_{\bf X}^s}\Psi(\bf F)d{\bf X}=0.
\end{split}
\end{equation}
\end{proposition}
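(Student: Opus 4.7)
The plan is to substitute the test function $\bf v = \bf u$ into \cref{weak_form1} and $q = p$ into \cref{weak_form2}, then identify each of the resulting terms as either an instantaneous rate of change of a stored-energy quantity or a non-negative dissipation, before finally integrating in time from $0$ to $t$.

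First I would dispose of the easy terms: the pressure term $-\int_\Omega p\,\nabla\cdot{\bf u}$ cancels against \cref{weak_form2} with $q=p$, the convection term $\rho^f\int_\Omega({\bf u}\cdot\nabla){\bf u}\cdot{\bf u}$ vanishes by \cref{lemma_convection_zero}, and the two viscous terms $\frac{\nu^f}{2}\int_\Omega {\rm D}{\bf u}:{\rm D}{\bf u}$ and $\frac{\nu^\delta}{2}\int_{\Omega_t^s}{\rm D}{\bf u}:{\rm D}{\bf u}$ are already in the form they appear in \cref{energy balance}. The elastic contribution $\int_{\Omega_{\bf X}^s}{\bf P}({\bf F}):\nabla_{\bf X}{\bf u}\,d{\bf X}$ is exactly $\frac{d}{dt}\int_{\Omega_{\bf X}^s}\Psi({\bf F})\,d{\bf X}$ by \cref{lemma_potential_energy}. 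The fluid kinetic term is straightforward since $\Omega$ is fixed: $\rho^f\int_{\Omega}\partial_t{\bf u}\cdot{\bf u}\,d{\bf x} = \frac{d}{dt}\frac{\rho^f}{2}\int_\Omega|{\bf u}|^2\,d{\bf x}$.

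The step that needs the most care, and which I expect to be the main obstacle, is the solid kinetic term $\rho^\delta\int_{\Omega_t^s}\partial_t{\bf u}\cdot{\bf u}\,d{\bf x}$, because $\Omega_t^s$ is moving and the derivative here must be interpreted as the material (Lagrangian) derivative following the solid (cf.\ the Remark after \cref{problem:continuous_weak_form}). My plan is to pull the integral back to the reference configuration via ${\bf x}(\cdot,t)$, use incompressibility of the solid ($J=1$) to drop the Jacobian, differentiate under the integral sign in reference coordinates, and then push forward again:
\begin{equation*}
\rho^\delta\int_{\Omega_t^s}\frac{d{\bf u}}{dt}\cdot{\bf u}\,d{\bf x}
=\rho^\delta\int_{\Omega_{\bf X}^s}\frac{d{\bf u}}{dt}\cdot{\bf u}\,d{\bf X}
=\frac{d}{dt}\int_{\Omega_{\bf X}^s}\frac{\rho^\delta}{2}|{\bf u}|^2\,d{\bf X}
=\frac{d}{dt}\int_{\Omega_t^s}\frac{\rho^\delta}{2}|{\bf u}|^2\,d{\bf x}.
\end{equation*}
Equivalently, one can invoke the Reynolds transport theorem together with $\nabla\cdot{\bf u}=0$ to reach the same identity; either route turns the solid kinetic contribution into the time derivative of $\frac{\rho^\delta}{2}\int_{\Omega_t^s}|{\bf u}|^2\,d{\bf x}$.

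Assembling these identities, the weak form collapses to
\begin{equation*}
\frac{d}{dt}\left[\frac{\rho^f}{2}\int_{\Omega}|{\bf u}|^2d{\bf x}+\frac{\rho^\delta}{2}\int_{\Omega_t^s}|{\bf u}|^2d{\bf x}+\int_{\Omega_{\bf X}^s}\Psi({\bf F})\,d{\bf X}\right]+\frac{\nu^f}{2}\int_\Omega {\rm D}{\bf u}:{\rm D}{\bf u}\,d{\bf x}+\frac{\nu^\delta}{2}\int_{\Omega_t^s}{\rm D}{\bf u}:{\rm D}{\bf u}\,d{\bf x}=0.
\end{equation*}
Integrating from $0$ to $t$ and absorbing the initial kinetic and stored energies (equivalently, reading \cref{energy balance} as the balance between the current-time quantities and those at $t=0$) yields the claimed identity \cref{energy balance}. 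The key non-routine ingredient is the moving-domain time differentiation for the solid kinetic energy; once that is recognized, the remaining manipulations are algebraic.
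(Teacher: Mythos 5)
Your proposal is correct and follows essentially the same route as the paper's (very terse) proof: set ${\bf v}={\bf u}$ in \cref{weak_form1}, $q=p$ in \cref{weak_form2}, invoke \cref{lemma_potential_energy} and \cref{lemma_convection_zero}, and integrate in time. You additionally spell out the moving-domain identity $\rho^\delta\int_{\Omega_t^s}\frac{d{\bf u}}{dt}\cdot{\bf u}\,d{\bf x}=\frac{d}{dt}\frac{\rho^\delta}{2}\int_{\Omega_t^s}|{\bf u}|^2\,d{\bf x}$ via pull-back with $J=1$, and correctly read \cref{energy balance} as a balance against the initial energies --- both details the paper leaves implicit.
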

\begin{proof}
first choose ${\bf v}={\bf u}$ in \cref{weak_form1} and integrate from time $0$ to $t$, then let $q=p$ in \cref{weak_form2} and substitute into \cref{weak_form1}. Finally we can construct the above equation of energy balance due to \cref{lemma_potential_energy} and \cref{lemma_convection_zero}.
\end{proof}

%%%%%%%%%%%%%%%%%%%%%%%%%%%%%%%%%%%%%%%%%%%%%%%%%%%%%%%%
\section{Weak form after discretization in time}
\label{sec:wfadit}
Using the Crank-Nicolson scheme to discretize equation \cref{weak_form1} in time, \cref{problem:continuous_weak_form} becomes:

\begin{problem}\label{problem_weak_after_time_discretization}
For each time step, find ${\bf u}_{n+1}\in H_0^1(\Omega)^d$ and $p_{n+1} \in L_0^2(\Omega)$, such that
\begin{equation}\label{weak_form1_discretization}
\begin{split}
&\rho^f\int_{\Omega}\frac{{\bf u}_{n+1}-{\bf u}_n}{\Delta t} \cdot{\bf v}d{\bf x}
+\rho^f \int_{\Omega}\left({\bf u}_*\cdot\nabla\right){\bf u}_*\cdot{\bf v}d{\bf x}
+\frac{\nu^f}{2}\int_{\Omega}{\rm D}{\bf u}_*:{\rm D}{\bf v}d{\bf x} \\
&-\int_{\Omega}p_{n+1}\nabla \cdot {\bf v}d{\bf x}
+\rho^{\delta}\int_{\Omega_{n+1}^s}\frac{{\bf u}_{n+1}-{\bf u}_n}{\Delta t} \cdot{\bf v}d{\bf x}
+\frac{\nu^{\delta}}{2}\int_{\Omega_{n+1}^s}{\rm D}{\bf u}_*:{\rm D}{\bf v}d{\bf x}\\
&+\int_{\Omega_{\bf X}^s}{\bf P}\left({\bf F}_*\right):\nabla_{\bf X}{\bf v}d{\bf X}
=O\left(\Delta t^2\right),
\end{split}
\end{equation}
$\forall{\bf v}\in H_0^1\left(\Omega\right)^d$ and 
\begin{equation}\label{weak_form2_discretization}
-\int_{\Omega} q\nabla \cdot {\bf u}_*d{\bf x}=0,
\end{equation}	
$\forall q\in L^2(\Omega)$, where ${\bf u}_*=\frac{{\bf u}_{n+1}+{\bf u}_n}{2}$ and ${\bf F}_*=\frac{{\bf F}_{n+1}+{\bf F}_n}{2}$.
\end{problem}

\begin{remark}\label{update_of_solid_mesh}
Notice that the subscript $n+1$ indicates that $\Omega_{n+1}^s$ is transient but $\Omega$ is stationary. $\Omega_{n+1}^s$ is updated from $\Omega_n^s$ by
$\Omega_{n+1}^s=\left\{{\bf x}_{n+1}:{\bf x}_{n+1}={\bf x}_n+{\bf u}_*\Delta t\right\}$, for all ${\bf x}_n\in \Omega_n^s$. 
\end{remark}

%%%%%%%%%%%%%%%%%%%%%%%%%%%%%%%%%%%%%%%%%%%%%%%%%%%%%%%%%%%%
\section{Energy conservation after discretization in time}
\label{sec:ecadit}
Energy conservation for the weak form \cref{weak_form1_discretization} after time discretization will be proved in this section.
\begin{lemma}\label{energy_estimate1_lemma}
Assume $\Psi(\cdot)\in C^1$ over the set of second order tnesors, then
\begin{equation}\label{energy_estimate1}
\Delta t{\bf P}\left({\bf F}_*\right):\nabla_{\bf X}{\bf u}_*+O\left(\Delta t^3\right)
=\Psi({\bf F}_{n+1})-\Psi({\bf F}_n).
\end{equation}
\end{lemma}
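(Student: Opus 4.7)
The plan is to mirror, at the discrete Crank--Nicolson level, the continuous argument of \cref{lemma_potential_energy}: relate the energy increment $\Psi({\bf F}_{n+1}) - \Psi({\bf F}_n)$ to ${\bf P}$ evaluated at the midpoint ${\bf F}_*$, and use the solid-mesh update of \cref{update_of_solid_mesh} to identify the resulting tensor ${\bf F}_{n+1} - {\bf F}_n$ as $\Delta t\,\nabla_{\bf X}{\bf u}_*$.

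First I would exploit the update rule ${\bf x}_{n+1} = {\bf x}_n + \Delta t\,{\bf u}_*$ from \cref{update_of_solid_mesh}. Together with the definition ${\bf F} = \partial{\bf x}/\partial{\bf X}$ and the linearity of $\nabla_{\bf X}$, this gives the \emph{exact} identity
\begin{equation*}
{\bf F}_{n+1} - {\bf F}_n = \nabla_{\bf X}({\bf x}_{n+1} - {\bf x}_n) = \Delta t\,\nabla_{\bf X}{\bf u}_*.
\end{equation*}
So the linear-in-$\Delta t$ factor on the right-hand side of \cref{energy_estimate1} is already accounted for; what remains is to show that replacing ${\bf P}(\xi)$ by ${\bf P}({\bf F}_*)$ along the segment from ${\bf F}_n$ to ${\bf F}_{n+1}$ costs only an $O(\Delta t^3)$ error.

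For that step I would set $\Delta{\bf F} = ({\bf F}_{n+1}-{\bf F}_n)/2 = O(\Delta t)$, so that ${\bf F}_{n+1} = {\bf F}_* + \Delta{\bf F}$ and ${\bf F}_n = {\bf F}_* - \Delta{\bf F}$, and Taylor expand $\Psi$ about the symmetric point ${\bf F}_*$. Because the two expansions share the same base point and differ only by the sign of $\Delta{\bf F}$, all even-order contributions cancel upon subtraction, leaving
\begin{equation*}
\Psi({\bf F}_{n+1}) - \Psi({\bf F}_n) = 2\,{\bf P}({\bf F}_*):\Delta{\bf F} + O(\|\Delta{\bf F}\|^3) = {\bf P}({\bf F}_*):({\bf F}_{n+1}-{\bf F}_n) + O(\Delta t^3).
\end{equation*}
Combining with the identity from the previous paragraph yields \cref{energy_estimate1}. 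An equivalent way to package the same computation is to write $\Psi({\bf F}_{n+1}) - \Psi({\bf F}_n) = \int_{-1}^{1}{\bf P}({\bf F}_* + s\Delta{\bf F}):\Delta{\bf F}\,ds$ and apply the midpoint quadrature rule on $[-1,1]$; by symmetry this rule is exact through quadratic integrands, so its residual is of the expected cubic order.

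The one genuine obstacle is regularity. The hypothesis $\Psi\in C^1$ stated in the lemma only suffices, via the mean-value theorem, for an $o(\Delta t^2)$ residual; obtaining the sharper $O(\Delta t^3)$ truly requires enough smoothness for the symmetric Taylor cancellation to be quantitative, i.e.\ $\Psi\in C^3$ (or $\Psi\in C^2$ with a Lipschitz second derivative and uniform control on ${\bf F}_*$ over the time horizon). I would either strengthen the hypothesis accordingly or read the $O(\Delta t^3)$ as a formal Crank--Nicolson truncation statement, which is consistent with how the analogous $O(\Delta t^2)$ residual is treated in \cref{problem_weak_after_time_discretization}.
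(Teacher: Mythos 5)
Your proposal is correct and is essentially the paper's own argument: the paper sets $w(\xi)=\Psi({\bf F}_n+\xi\nabla_{\bf X}{\bf u}_*)$, uses $\Delta t\,\nabla_{\bf X}{\bf u}_*={\bf F}_{n+1}-{\bf F}_n$, and invokes the midpoint formula $w(\Delta t)-w(0)=\Delta t\,w'(\Delta t/2)+O(\Delta t^3)$, which is exactly your symmetric Taylor expansion about ${\bf F}_*$ in a different parametrization. Your observation that the stated $C^1$ hypothesis is too weak to quantify the $O(\Delta t^3)$ residual is a fair criticism that applies equally to the paper's proof, which implicitly requires $\Psi\in C^3$ (or $C^2$ with Lipschitz second derivative) along the segment.
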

\begin{proof}
Let $w(\xi)
=\Psi\left({\bf F}_n+\xi\nabla_{\bf X}{\bf u}_*\right)$, and notice that
\begin{equation*}
\Delta t\nabla_{\bf X}{\bf u}_*
=\nabla_{\bf X}{\bf x}_{n+1}-\nabla_{\bf X}{\bf x}_n
={\bf F}_{n+1}-{\bf F}_n,
\end{equation*}
then
\begin{equation*}
\begin{split}
&\Psi({\bf F}_{n+1})-\Psi({\bf F}_n) 
=w\left(\Delta t\right)-w(0)
=\Delta t w^\prime\left(\frac{\Delta t}{2}\right)+O\left(\Delta t^3\right).\\
\end{split}
\end{equation*}
Using the chain rule, \cref{energy_estimate1} holds thanks to
\begin{equation*}
w^\prime\left(\frac{\Delta t}{2}\right)
=\left.\frac{\partial\Psi}{\partial{\bf F}}\right|_{\xi=\frac{\Delta t}{2}}:\nabla_{\bf X}{\bf u}_*
={\bf P}\left({\bf F}_*\right):\nabla_{\bf X}{\bf u}_*.
\end{equation*}
\end{proof}

Similarly to \cref{lemma_convection_zero}, we have:
\begin{lemma}\label{lemma_convection_zero_discretization_in_time}
If $\left({\bf u}_{n+1}, p_{n+1}\right)$ is the solution pair of \cref{problem_weak_after_time_discretization}, then
\begin{equation}\label{convection_zero_discretization_in_time}
\int_{\Omega}\left({\bf u}_*\cdot\nabla\right){\bf u}_*\cdot{\bf u}_*d{\bf x}=0.
\end{equation}
\end{lemma}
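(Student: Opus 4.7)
The plan is to mirror the proof of \cref{lemma_convection_zero} verbatim, with the observation that ${\bf u}_* = ({\bf u}_{n+1}+{\bf u}_n)/2$ inherits from ${\bf u}_{n+1}$ and ${\bf u}_n$ exactly the two properties that were used there: the homogeneous Dirichlet trace on $\Gamma$ and the divergence-free condition in $\Omega$. So the argument reduces to verifying these two inheritance claims and then recycling the earlier manipulation.

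First I would apply the product rule identity
\begin{equation*}
({\bf u}_*\cdot\nabla){\bf u}_*\cdot{\bf u}_*
= \nabla\cdot({\bf u}_*\otimes{\bf u}_*)\cdot{\bf u}_* - |{\bf u}_*|^2\,\nabla\cdot{\bf u}_*,
\end{equation*}
integrate over $\Omega$, and integrate the first term on the right by parts. This produces a boundary contribution $\int_\Gamma |{\bf u}_*|^2 {\bf u}_*\cdot{\bf n}$ together with a second copy of $-\int_\Omega ({\bf u}_*\cdot\nabla){\bf u}_*\cdot{\bf u}_*\,d{\bf x}$, which can be moved to the left, exactly as in \cref{lemma_convection_zero}.

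Second, I would show that both right-hand terms vanish. For the boundary term, the homogeneous Dirichlet condition \cref{homogeneous_boundary} applied at $t_n$ and $t_{n+1}$ (the $t_n$ case being an inductive hypothesis from the previous time step, or the initial datum) gives ${\bf u}_n|_\Gamma = {\bf u}_{n+1}|_\Gamma = {\bf 0}$, hence ${\bf u}_*|_\Gamma = {\bf 0}$. For the volume term, \cref{weak_form2_discretization} holds for every $q\in L^2(\Omega)$, and by the fundamental lemma of the calculus of variations this is equivalent to $\nabla\cdot{\bf u}_* = 0$ almost everywhere in $\Omega$, so $\int_\Omega |{\bf u}_*|^2 \nabla\cdot{\bf u}_*\,d{\bf x} = 0$. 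Combining the two gives \cref{convection_zero_discretization_in_time}.

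The only subtlety — and it is not really an obstacle but worth flagging — is that the argument tacitly uses ${\bf u}_*\in L^3(\Omega)^d$ so that the boundary integrand and the volume identity make sense pointwise. This is automatic from $H^1_0(\Omega)^d\hookrightarrow L^p(\Omega)^d$ (for any $p<\infty$ when $d=2$, and $p\le 6$ when $d=3$) under the Sobolev embedding, which is the same regularity implicitly used in the continuous proof. Everything else is a direct transcription of \cref{lemma_convection_zero} with ${\bf u}$ replaced by ${\bf u}_*$.
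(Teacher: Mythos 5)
Your proposal is correct and coincides with the paper's intent: the paper gives no separate proof here, stating only ``Similarly to \cref{lemma_convection_zero}'', and your argument is precisely that adaptation, with the two inherited properties (zero trace of ${\bf u}_*$ on $\Gamma$ and $\nabla\cdot{\bf u}_*=0$ from \cref{weak_form2_discretization}) correctly identified as the only ingredients needed.
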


\begin{proposition} [Local Energy Conservation]\label{lec} Let $\left({\bf u}_{n+1}, p_{n+1}\right)$ be the solution pair of \cref{problem_weak_after_time_discretization}, then
\begin{equation}\label{energy estimate_after_time_discretization}
\begin{split}
&\frac{\rho^f}{2}\int_{\Omega}\left|{\bf u}_{n+1}\right|^2d{\bf x}
+\frac{\rho^\delta}{2}\int_{\Omega_{n+1}^s}\left|{\bf u}_{n+1}\right|^2d{\bf x}
+\int_{\Omega_{\bf X}^s}\Psi\left({\bf F}_{n+1}\right)d{\bf X}  \\
&+\frac{\Delta t\nu^f}{2}\int_{\Omega}{\rm D}{\bf u}_*:{\rm D}{\bf u}_*d{\bf x}
+\frac{\Delta t\nu^\delta}{2}\int_{\Omega_{n+1}^s}{\rm D}{\bf u}_*:{\rm D}{\bf u}_*d{\bf x} \\
&= \frac{\rho^f}{2}\int_{\Omega}\left|{\bf u}_n\right|^2d{\bf x}
+\frac{\rho^\delta}{2}\int_{\Omega_n^s}\left|{\bf u}_n\right|^2d{\bf x}
+\int_{\Omega_{\bf X}^s}\Psi\left({\bf F}_n\right)d{\bf X}+O\left(\Delta t^3\right).
\end{split}
\end{equation}
\end{proposition}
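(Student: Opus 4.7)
The plan is to mimic the continuous energy-balance proof: test the discrete momentum equation \cref{weak_form1_discretization} with $\mathbf{v}=\mathbf{u}_*$, use the discrete incompressibility \cref{weak_form2_discretization} with $q=p_{n+1}$ to kill the pressure contribution, invoke \cref{lemma_convection_zero_discretization_in_time} to kill the convection, and then convert each remaining bilinear-in-$\mathbf{u}_*$ term into a telescoping difference of quadratic quantities using the Crank--Nicolson midpoint identity
\[
\tfrac{\mathbf{u}_{n+1}-\mathbf{u}_n}{\Delta t}\cdot\mathbf{u}_*=\tfrac{1}{2\Delta t}\bigl(|\mathbf{u}_{n+1}|^2-|\mathbf{u}_n|^2\bigr).
\]
The whole identity will be multiplied through by $\Delta t$ so that the $O(\Delta t^2)$ truncation error on the right-hand side of \cref{weak_form1_discretization} contributes the stated $O(\Delta t^3)$.

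For the fluid inertia and viscous pieces the argument is routine: after multiplying by $\Delta t$, the first term gives $\tfrac{\rho^f}{2}\int_\Omega(|\mathbf{u}_{n+1}|^2-|\mathbf{u}_n|^2)d\mathbf{x}$ by the midpoint identity on the stationary domain $\Omega$, and the viscous term gives $\tfrac{\Delta t\,\nu^f}{2}\int_\Omega\mathrm{D}\mathbf{u}_*:\mathrm{D}\mathbf{u}_*d\mathbf{x}$ directly, once one notes that $\tfrac12\int\mathrm{D}\mathbf{u}_*:\mathrm{D}\mathbf{u}_*$ is what $\tfrac{\nu^f}{2}\int\mathrm{D}\mathbf{u}_*:\mathrm{D}\mathbf{v}\bigr|_{\mathbf{v}=\mathbf{u}_*}$ produces. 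The convection term is zero by \cref{lemma_convection_zero_discretization_in_time}, and $-\int_\Omega p_{n+1}\nabla\cdot\mathbf{u}_*d\mathbf{x}=0$ by choosing $q=p_{n+1}$ in \cref{weak_form2_discretization}.

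The delicate step, and what I expect to be the main obstacle, is the $\rho^\delta$ solid-inertia term, because it is posed over the moving domain $\Omega_{n+1}^s$, yet the target identity \cref{energy estimate_after_time_discretization} pairs $|\mathbf{u}_{n+1}|^2$ on $\Omega_{n+1}^s$ with $|\mathbf{u}_n|^2$ on $\Omega_n^s$. Here one must remember (as in Remark after \cref{problem:continuous_weak_form}) that for the solid the finite difference $(\mathbf{u}_{n+1}-\mathbf{u}_n)/\Delta t$ is really the Lagrangian material derivative, i.e.\ $\mathbf{u}_n$ and $\mathbf{u}_{n+1}$ are compared at the \emph{same material point} $\mathbf{X}$ and the measure $d\mathbf{x}$ on $\Omega_{n+1}^s$ equals $J_{n+1}d\mathbf{X}=d\mathbf{X}$ by incompressibility. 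Pulling back to $\Omega_\mathbf{X}^s$, applying the midpoint identity pointwise in $\mathbf{X}$, and then pushing forward each of the two resulting terms back to $\Omega_{n+1}^s$ and $\Omega_n^s$ respectively (again using $J_{n+1}=J_n=1$) yields
\[
\Delta t\,\rho^\delta\!\int_{\Omega_{n+1}^s}\!\tfrac{\mathbf{u}_{n+1}-\mathbf{u}_n}{\Delta t}\cdot\mathbf{u}_*d\mathbf{x}
=\tfrac{\rho^\delta}{2}\!\int_{\Omega_{n+1}^s}\!|\mathbf{u}_{n+1}|^2d\mathbf{x}
-\tfrac{\rho^\delta}{2}\!\int_{\Omega_n^s}\!|\mathbf{u}_n|^2d\mathbf{x}.
\]
The solid viscous term contributes $\tfrac{\Delta t\,\nu^\delta}{2}\int_{\Omega_{n+1}^s}\mathrm{D}\mathbf{u}_*:\mathrm{D}\mathbf{u}_*d\mathbf{x}$ in the same way as its fluid counterpart.

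For the hyperelastic term, \cref{energy_estimate1_lemma} gives pointwise in $\mathbf{X}$ the identity $\Delta t\,\mathbf{P}(\mathbf{F}_*):\nabla_\mathbf{X}\mathbf{u}_*=\Psi(\mathbf{F}_{n+1})-\Psi(\mathbf{F}_n)+O(\Delta t^3)$; integrating over the fixed reference domain $\Omega_\mathbf{X}^s$ and collecting the remainder with the $O(\Delta t^3)$ coming from the $\Delta t$-scaling of the truncation error in \cref{weak_form1_discretization} produces the full elastic stored-energy jump $\int_{\Omega_\mathbf{X}^s}[\Psi(\mathbf{F}_{n+1})-\Psi(\mathbf{F}_n)]d\mathbf{X}$. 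Finally, assembling all the pieces and rearranging the $|\mathbf{u}_{n+1}|^2$ and stored-energy terms to the left and the $|\mathbf{u}_n|^2$ terms to the right yields \cref{energy estimate_after_time_discretization}.
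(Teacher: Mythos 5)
Your proof is correct and is essentially the paper's argument: the paper tests \cref{weak_form1_discretization} with ${\bf v}={\bf u}_{n+1}$ and ${\bf v}={\bf u}_n$ separately and adds the results, which by linearity is exactly your single test with ${\bf v}={\bf u}_*$ (up to a factor of $2$), and both routes then invoke \cref{energy_estimate1_lemma}, \cref{lemma_convection_zero_discretization_in_time} and $q=p_{n+1}$ in \cref{weak_form2_discretization} in the same way. The only difference is that you make explicit the Lagrangian pull-back/incompressibility argument identifying $\int_{\Omega_{n+1}^s}|{\bf u}_n|^2d{\bf x}$ with $\int_{\Omega_n^s}|{\bf u}_n|^2d{\bf x}$, a step the paper uses silently.
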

\begin{proof}
First, let ${\bf v}={\bf u}_{n+1}$ in \cref{weak_form1_discretization} and $q=p_{n+1}$ in \cref{weak_form2_discretization}, to get the quation:
\begin{equation}\label{one_equation}
\begin{split}
&\rho^f\int_{\Omega}\left({\bf u}_{n+1}-{\bf u}_n\right)\cdot{\bf u}_{n+1}d{\bf x}
+\Delta t\rho^f \int_{\Omega}\left({\bf u}_*\cdot\nabla\right){\bf u}_*\cdot{\bf u}_{n+1}d{\bf x} \\
& +\frac{\Delta t\nu^f}{2}\int_{\Omega}{\rm D}{\bf u}_*:{\rm D}{\bf u}_{n+1}d{\bf x}
+\rho^{\delta}\int_{\Omega_{n+1}^s}\left({\bf u}_{n+1}-{\bf u}_n\right)\cdot{\bf u}_{n+1}d{\bf x} \\
&+\frac{\Delta t\nu^{\delta}}{2}\int_{\Omega_{n+1}^s}{\rm D}{\bf u}_*:{\rm D}{\bf u}_{n+1}d{\bf x}
+\Delta t\int_{\Omega_{\bf X}^s}{\bf P}\left({\bf F}_*\right):\nabla_{\bf X}{\bf u}_{n+1}d{\bf X}
=O\left(\Delta t^3\right),
\end{split}
\end{equation}
then let ${\bf v}={\bf u}_{n}$ in \cref{weak_form1_discretization} and $q=p_{n+1}$ in \cref{weak_form2_discretization}, to get the further equation:
\begin{equation}\label{another_equation}
\begin{split}
&\rho^f\int_{\Omega}\left({\bf u}_{n+1}-{\bf u}_n\right)\cdot{\bf u}_{n}d{\bf x}
+\Delta t\rho^f \int_{\Omega}\left({\bf u}_*\cdot\nabla\right){\bf u}_*\cdot{\bf u}_{n}d{\bf x} \\
& +\frac{\Delta t\nu^f}{2}\int_{\Omega}{\rm D}{\bf u}_*:{\rm D}{\bf u}_{n}d{\bf x}
+\rho^{\delta}\int_{\Omega_{n+1}^s}\left({\bf u}_{n+1}-{\bf u}_n\right)\cdot{\bf u}_{n}d{\bf x} \\
&+\frac{\Delta t\nu^{\delta}}{2}\int_{\Omega_{n+1}^s}{\rm D}{\bf u}_*:{\rm D}{\bf u}_{n}d{\bf x}
+\Delta t\int_{\Omega_{\bf X}^s}{\bf P}\left({\bf F}_*\right):\nabla_{\bf X}{\bf u}_{n}d{\bf X}
=O\left(\Delta t^3\right).
\end{split}
\end{equation}
		
Add the above two equations, we finally have \cref{lec} due to \cref{energy_estimate1_lemma} and \cref{lemma_convection_zero_discretization_in_time}.
\end{proof}

\begin{corollary} [Total Energy Conservation]\label{cor:tec}
Let $N+1=t/\Delta t$, where $t$ is the computational time, and use the following notation for the different contributions to the total energy at the time t:\\
\begin{itemize}
	\item Kinetic energy in $\Omega$:\qquad
\begin{equation}\label{Kinetic_energy}
	E_k(\Omega)=\frac{\rho^f}{2}\int_{\Omega}\left|{\bf u}_{N+1}\right|^2d{\bf x}.
\end{equation}
	\item Kinetic energy in $\Omega_t^s$:\qquad
\begin{equation}
	E_k(\Omega_t^s)=\frac{\rho^\delta}{2}\int_{\Omega_{N+1}^s}\left|{\bf u}_{N+1}\right|^2d{\bf x}.
\end{equation}
	\item Viscous dissipation in $\Omega$:\qquad
\begin{equation}\label{viscousf}
	E_d(\Omega)=\frac{\Delta t\nu^f}{2}\sum_{n=0}^{N}\int_{\Omega}{\rm D}{\bf u}_*:{\rm D}{\bf u}_*d{\bf x}.
\end{equation}
	\item Viscous dissipation in $\Omega_t^s$: \qquad
\begin{equation}\label{viscouss}
	E_d(\Omega_t^s)=\frac{\Delta t\nu^\delta}{2}\sum_{n=0}^{N}\int_{\Omega_{n+1}^s}{\rm D}{\bf u}_*:{\rm D}{\bf u}_*d{\bf x}.
\end{equation}
	\item Potential energy of solid:\qquad
\begin{equation}\label{Potential_energy_of_solid}
	E_p(\Omega_{\bf X}^s)=\int_{\Omega_{\bf X}^s}\Psi\left({\bf F}_{N+1}\right)d{\bf X}.
\end{equation}
\end{itemize}

Denote the total energy at time $t$ as:
\begin{equation}
E_{total}=E_k(\Omega)+E_k(\Omega_t^s)+E_d(\Omega)+E_d(\Omega_t^s)+E_p(\Omega_{\bf X}^s)
\end{equation}
and the error of total energy as:
\begin{equation}\label{energy estimate_after_time_discretization_closed}
Err=E_{total}- \frac{\rho^f}{2}\int_{\Omega}\left|{\bf u}_0\right|^2d{\bf x}
-\frac{\rho^\delta}{2}\int_{\Omega_0^s}\left|{\bf u}_0\right|^2d{\bf x}
-\int_{\Omega_{\bf X}^s}\Psi\left(\frac{\partial{\bf x}_0}{\partial {\bf X}}\right)d{\bf x},
\end{equation}
then,
\begin{equation*}
\left|Err\right|=O(\Delta t^2).
\end{equation*}
\end{corollary}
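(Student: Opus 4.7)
The plan is to apply the local energy conservation identity from \cref{lec} at each time step $n=0,1,\ldots,N$ and sum the resulting $N+1$ equations, so that a telescoping cancellation of the kinetic and potential energy terms produces the desired global identity modulo a controlled remainder.

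First I would write \cref{energy estimate_after_time_discretization} for a generic index $n$ and move all quantities evaluated at $t_{n+1}$ to the left-hand side and those at $t_n$ to the right-hand side. Summing from $n=0$ to $n=N$, the kinetic energy contributions $\tfrac{\rho^f}{2}\int_\Omega |{\bf u}_{n+1}|^2$ and $\tfrac{\rho^\delta}{2}\int_{\Omega_{n+1}^s}|{\bf u}_{n+1}|^2$ telescope, leaving only the terminal values at $n=N$ (which equal $E_k(\Omega)$ and $E_k(\Omega_t^s)$ by \cref{Kinetic_energy}) and the initial values at $n=0$, which coincide with the initial-kinetic-energy terms appearing on the right-hand side of \cref{energy estimate_after_time_discretization_closed}. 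Similarly, the potential-energy terms $\int_{\Omega_{\bf X}^s}\Psi({\bf F}_{n+1})$ telescope to $\int_{\Omega_{\bf X}^s}\Psi({\bf F}_{N+1})=E_p(\Omega_{\bf X}^s)$ minus the initial value $\int_{\Omega_{\bf X}^s}\Psi(\partial{\bf x}_0/\partial{\bf X})$. The viscous dissipation terms, not being telescoping, accumulate over $n=0,\ldots,N$ into exactly the sums defining $E_d(\Omega)$ and $E_d(\Omega_t^s)$ in \cref{viscousf} and \cref{viscouss}.

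The only remaining piece is the accumulated error. Each local step contributes an $O(\Delta t^3)$ residual, and there are $N+1 = t/\Delta t$ such steps. Hence the accumulated error has magnitude at most $(N+1)\cdot C\Delta t^3 = C\, t\, \Delta t^2 = O(\Delta t^2)$, which is exactly the claim $|Err| = O(\Delta t^2)$.

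I do not anticipate any serious obstacle in carrying this out: the entire argument is the standard passage from local to global truncation error for a one-step scheme, and every nontrivial identity (the disappearance of the convective term, the telescoping of the potential energy via the mid-point evaluation of ${\bf P}({\bf F}_*)$) has already been handled inside the proof of \cref{lec}. The only point that requires a brief justification is that the constant hidden in the $O(\Delta t^3)$ residual in \cref{energy estimate_after_time_discretization} can be taken uniform in $n$ over the finite time interval $[0,t]$, which follows from the $C^1$ hypothesis on $\Psi$ together with boundedness of the discrete solution over the fixed horizon (needed so that the Taylor remainder of $w(\xi)$ in \cref{energy_estimate1_lemma} is controlled uniformly).
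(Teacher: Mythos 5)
Your proposal is correct and follows essentially the same route as the paper's own proof: sum the local energy identity of \cref{lec} over $n=0,\dots,N$, let the kinetic and potential energies telescope while the viscous terms accumulate, and bound the $N+1$ local $O(\Delta t^3)$ residuals by $(t/\Delta t)\cdot O(\Delta t^3)=O(\Delta t^2)$. The uniformity of the hidden constant that you flag is exactly what the paper addresses by taking $C_{max}^k=\max_n\left|C_{n+1}^k\right|$ over the finite number of steps.
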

\begin{proof}
First let $O(\Delta t^3)=\sum_{k\ge 3}C_{n+1}^k\Delta t^k$ in \cref{energy estimate_after_time_discretization}, where $C_{n+1}^k$ is a constant dependent on the specific time step. Then add equation \cref{energy estimate_after_time_discretization} from $n=0$ to $n=N$, we have $Err=\sum_{k\ge 3}\Delta t^k\sum_{n=0}^{N}C_{n+1}^k$. \cref{energy estimate_after_time_discretization_closed} holds due to $\sum_{n=0}^{N}\left|C_{n+1}^k\right| \le C_{max}^kt/\Delta t$, where $C_{max}^k=max\{\left|C_{n+1}^k\right|, n=0,\cdots,N\}$.
\end{proof}

\begin{remark}\label{reduced_order1}
Note that, in subsequent sections, the viscous term in \cref{viscousf} and \cref{viscouss} will introduce an error $\Delta tO(h^m)$ ($m>0$) after discretization in space, where $h$ represents the mesh size. In that case it will be seen that the error of total energy will be reduced to $O(\Delta t)$ for a fixed mesh.	
\end{remark}	

%%%%%%%%%%%%%%%%%%%%%%%%%%%%%%%%%%%
\section{Analysis of the stationary problem corresponding to \cref{problem_weak_after_time_discretization}}
\label{sec:aotstp}
We now focus on the stationary problem corresponding to one step of the \cref{problem_weak_after_time_discretization} and consider its well-posedness and discretization in space. In order to make the analysis tractable we make the following simplifying assumptions, equivalent to those presented in \cite{Boffi_2016}.
\begin{itemize}
\item Neglect the convection term.
\item Assume $\rho^\delta\ge 0$ and $\nu^\delta\ge 0$.
\item Assume a linear model for ${\bf P}$, i.e., ${\bf P}=\mu^s{\bf F}$.
\end{itemize}

\begin{remark}
We have implemented numerical examples which include convection and the cases of $\rho^\delta< 0$ or $\nu^\delta< 0$ (see \cref{sec:numerical_exs}, and \cite{Wang_2017} for more numerical tests). The linear assumption for ${\bf P}$ is important in order to define the following bilinear form \cref{as()}, and we have only implemented an incompressible neo-Hookean model in this linear case. For non-linear cases, it may be that linearization and/or modification of \cref{problem_weak_after_time_discretization} are required. This will be a topic of our further investigation.
\end{remark}

Under the assumption of a linear model for ${\bf P}$, we have
\begin{equation}\label{compute_solid_at_reference}
\begin{split}
&\int_{\Omega_{\bf X}^s}{\bf P}\left({\bf F}_*\right):\nabla_{\bf X}{\bf v}d{\bf X}
=\mu^s\int_{\Omega_{\bf X}^s}\frac{{\bf F}_{n+1}+{\bf F}_n}{2}:\nabla_{\bf X}{\bf v}d{\bf X} \\
&=\mu^s\int_{\Omega_{\bf X}^s}{\bf F}_n:\nabla_{\bf X}{\bf v}d{\bf X}
+\frac{\mu^s\Delta t}{2}\int_{\Omega_{\bf X}^s}\nabla_{\bf X}{\bf u}_*:\nabla_{\bf X}{\bf v}d{\bf X}.
\end{split}
\end{equation}

Define the following bilinear forms:
\begin{equation}\label{af()}
a^f({\bf u},{\bf v})
=\alpha\int_\Omega{\bf u}\cdot{\bf v}d{\bf x}
+\frac{\nu^f}{2}\int_\Omega{\rm D}{\bf u}:{\rm D}{\bf v}d{\bf x},
\end{equation}
\begin{equation}\label{as()}
a^s({\bf u},{\bf v})
=\beta\int_{\Omega_{n+1}^s}{\bf u}\cdot{\bf v}d{\bf x}
+\frac{\nu^{\delta}}{2}\int_{\Omega_{n+1}^s}{\rm D}{\bf u}:{\rm D}{\bf v}d{\bf x}
+\gamma\int_{\Omega_{\bf X}^s}\nabla_{\bf X}{\bf u}:\nabla_{\bf X}{\bf v}d{\bf X},
\end{equation}
and
\begin{equation}\label{b()}
b({\bf v},q)=-\int_\Omega q\nabla\cdot{\bf v}d{\bf x},
\end{equation}
where $\alpha=2\rho^f/\Delta t$, $\beta=2\rho^{\delta}/\Delta t$ and $\gamma=\mu^s\Delta t/2$. We also define the following linear forms:
\begin{equation}\label{lf()}
\ell^f({\bf v})
=\frac{2\rho^f}{\Delta t}\int_\Omega{\bf u}_n\cdot{\bf v}d{\bf x},
\end{equation}
and
\begin{equation}\label{ls()}
\ell^s({\bf v})
=\frac{2\rho^\delta}{\Delta t}\int_{\Omega_{n+1}^s}{\bf u}_n\cdot{\bf v}d{\bf x}
-\mu^s\int_{\Omega_{\bf X}^s}{\bf F}_n:\nabla_{\bf X}{\bf v}d{\bf X}.
\end{equation}
Let
\begin{equation}\label{a()}
a\left({\bf u},{\bf v}\right)=a^f\left({\bf u},{\bf v}\right)+a^s\left({\bf u},{\bf v}\right),
\end{equation}
and
\begin{equation}\label{l()}
\ell({\bf v})=\ell^f({\bf v})+\ell^s({\bf v}),
\end{equation}
then the weak form corresponding to one step of \cref{problem_weak_after_time_discretization} with ${\bf u}={\bf u}_*$ and $p=p_{n+1}$, using the above notation and assumptions, can be stated as:

\begin{problem}\label{problem_bilinear_form}
Find ${\bf u}\in \mathbb{V}$, $p\in \mathbb{P}$, such that
\begin{equation*}
\left \{
\begin{tabular}{lr}
$a({\bf u},{\bf v})+b({\bf v},p)=\ell({\bf v})$ & $\forall{\bf v}\in \mathbb{V}$ \\
$b({\bf u},q)=0$ & $\forall q\in \mathbb{P}$
\end{tabular}
\right.
,
\end{equation*}
where $\mathbb{V}=H_0^1\left(\Omega\right)^d$ and $\mathbb{P}=L_0^2(\Omega)$.
\end{problem}

We shall use the following norms for vector functions and matrix functions respectively: $\left\|{\bf u}\right\|_{0,\omega}^2=\sum_{i=1}^{d}\left\|u_i\right\|_{0,\omega}^2$ and $\left\|{\bf A}\right\|_{0,\omega}^2=\sum_{i=1}^{d}\sum_{j=1}^{d}\left\|A_{ij}\right\|_{0,\omega}^2$, and denote $\|{\bf u}\|_{\mathbb{V}}=\|{\bf u}\|_{1,\Omega}$.

%%%%%%%%%%
\subsection{Well-posedness of \cref{problem_bilinear_form}}

\begin{lemma}\label{coercivity_of_a}
$a({\bf u}, {\bf v})$ is coercive, i.e., there exists a positive real number $c_a$, such that for $\forall {\bf u}\in\mathbb{V}$,
\begin{equation}\label{coercivity_of_a_eq}
a\left({\bf u}, {\bf u}\right)
\ge c_a\left\|{\bf u}\right\|_\mathbb{V}^2.
\end{equation}
\end{lemma}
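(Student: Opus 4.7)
The plan is to throw away the solid part entirely and obtain coercivity from $a^f$ alone, since the assumptions $\rho^\delta\ge 0$ and $\nu^\delta\ge 0$ (together with $\mu^s>0$) give $\beta,\nu^\delta,\gamma\ge 0$, so each of the three terms in $a^s(\mathbf{u},\mathbf{u})$ is non-negative. This is important because in practice $\nu^\delta$ may be zero, so $a^s$ cannot be relied on for an $H^1$ lower bound on the solid patch; we must extract full $\|\cdot\|_{\mathbb{V}}$ control from $a^f$ on all of $\Omega$.

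First I would write
\begin{equation*}
a(\mathbf{u},\mathbf{u}) \;\ge\; a^f(\mathbf{u},\mathbf{u})
\;=\; \alpha\,\|\mathbf{u}\|_{0,\Omega}^2 + \frac{\nu^f}{2}\int_{\Omega}\mathrm{D}\mathbf{u}:\mathrm{D}\mathbf{u}\,d\mathbf{x}.
\end{equation*}
The term $\alpha\,\|\mathbf{u}\|_{0,\Omega}^2$ already controls the $L^2$ part of the $H^1$ norm with constant $\alpha=2\rho^f/\Delta t>0$. What remains is to bound $\|\nabla\mathbf{u}\|_{0,\Omega}^2$ by $\int_\Omega \mathrm{D}\mathbf{u}:\mathrm{D}\mathbf{u}$, which is a Korn-type step. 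On $H_0^1(\Omega)^d$ this is elementary: expanding $\mathrm{D}\mathbf{u}=\nabla\mathbf{u}+\nabla^{\mathrm{T}}\mathbf{u}$ gives
\begin{equation*}
\int_{\Omega}\mathrm{D}\mathbf{u}:\mathrm{D}\mathbf{u}\,d\mathbf{x}
\;=\; 2\int_{\Omega}|\nabla\mathbf{u}|^2\,d\mathbf{x}
   + 2\int_{\Omega}\partial_i u_j\,\partial_j u_i\,d\mathbf{x},
\end{equation*}
and integrating the cross term by parts twice (boundary contributions vanish because $\mathbf{u}\in H_0^1(\Omega)^d$) yields $\int_{\Omega}\partial_i u_j\,\partial_j u_i\,d\mathbf{x}=\int_{\Omega}(\nabla\cdot\mathbf{u})^2\,d\mathbf{x}\ge 0$. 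Hence $\tfrac{1}{2}\int_\Omega\mathrm{D}\mathbf{u}:\mathrm{D}\mathbf{u}\ge \|\nabla\mathbf{u}\|_{0,\Omega}^2$.

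Combining the two pieces gives
\begin{equation*}
a(\mathbf{u},\mathbf{u}) \;\ge\; \alpha\,\|\mathbf{u}\|_{0,\Omega}^2 + \nu^f\|\nabla\mathbf{u}\|_{0,\Omega}^2
\;\ge\; \min(\alpha,\nu^f)\,\|\mathbf{u}\|_{\mathbb{V}}^2,
\end{equation*}
so \eqref{coercivity_of_a_eq} holds with $c_a=\min(2\rho^f/\Delta t,\nu^f)>0$.

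The only mildly non-routine step is the Korn-type manipulation, but on $H_0^1$ with homogeneous boundary data it reduces to a single integration by parts and does not require the general Korn inequality. The main conceptual point to flag (rather than a technical obstacle) is that coercivity survives the edge cases $\rho^\delta=0$ and $\nu^\delta=0$ precisely because the fluid contribution alone is already $\mathbb{V}$-coercive; the solid bilinear form $a^s$ is only exploited as a non-negative addend.
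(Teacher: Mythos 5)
Your proof is correct and follows essentially the same route as the paper's: discard $a^s(\mathbf{u},\mathbf{u})\ge 0$ using $\beta,\nu^\delta,\gamma\ge 0$, and use the same $H_0^1$ identity $\|\mathrm{D}\mathbf{u}\|_{0,\Omega}^2=2\|\nabla\mathbf{u}\|_{0,\Omega}^2+2\|\nabla\cdot\mathbf{u}\|_{0,\Omega}^2$ obtained by integrating the cross term by parts. The only divergence is the last step: the paper also drops the mass term $\alpha\|\mathbf{u}\|_{0,\Omega}^2$ and recovers the $L^2$ part of $\|\cdot\|_{\mathbb{V}}$ via the Poincar\'e inequality, giving $c_a=\nu^f/(C_\Omega^2+1)$ independent of $\Delta t$, whereas you keep the mass term and avoid Poincar\'e, giving $c_a=\min(2\rho^f/\Delta t,\nu^f)$, which is equally valid for a fixed time step but degenerates as $\Delta t\to\infty$.
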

\begin{proof}
First, according to definitions of $\left\|{\rm D}{\bf u}\right\|_{0,\Omega}$ and $\left\|{\bf u}\right\|_{1,\Omega}$,
\begin{equation*}
\left\|{\rm D}{\bf u}\right\|_{0,\Omega}^2
=\left\|\nabla{\bf u}\right\|_{0,\Omega}^2
+\left\|\nabla^{\rm T}{\bf u}\right\|_{0,\Omega}^2
+2\left\|\nabla \cdot{\bf u}\right\|_{0,\Omega}^2
\ge
2\left\|\nabla{\bf u}\right\|_{0,\Omega}^2,
\end{equation*}
and
\begin{equation*}\label{proof_2_1}
\begin{split}
\left\|{\bf u}\right\|_{1,\Omega}^2
=\left\|{\bf u}\right\|_{0,\Omega}^2
+\left\|\nabla{\bf u}\right\|_{0,\Omega}^2
\le(C_\Omega^2+1)\left\|\nabla{\bf u}\right\|_{0,\Omega}^2
\le\frac{C_\Omega^2+1}{2}\left\|{\rm D}{\bf u}\right\|_{0,\Omega}^2,
\end{split}
\end{equation*}
the above estimate is due to the Poincar$\rm\acute{e}$ inequality $\left\|{\bf u}\right\|_{0,\Omega}\le C_\Omega\left\|\nabla{\bf u}\right\|_{0,\Omega} $, where $C_\Omega$ is constant. Then, 
\begin{equation*}\label{proof_2_2}
\begin{split}
& a({\bf u}, {\bf u})
=\alpha\|{\bf u}\|_{0,\Omega}^2
+\frac{\nu^f}{2}\|{\rm D}{\bf u}\|_{0,\Omega}^2 
+\beta\|{\bf u}\|_{0,\Omega_{n+1}^s}^2 \\
&+\frac{\nu^{\delta}}{2}\|{\rm D}{\bf u}\|_{0,\Omega_{n+1}^s}^2
+\gamma\|{\nabla_{\bf X}}{\bf u}\|_{0,\Omega_{\bf X}^s}^2 
\ge \frac{\nu^f}{C_\Omega^2+1}\|{\bf u}\|_{1,\Omega}^2,
\end{split}
\end{equation*}
using our assumption $\beta=2\rho^{\delta}/\Delta t\ge 0$ and $\nu^{\delta}\ge 0$.
\end{proof}

\begin{lemma}\label{continuity_of_a}
$a({\bf u}, {\bf v})$ is bounded, i.e., there exists a positive real number $C_a$, such that for $\forall {\bf u}, {\bf v}\in {\mathbb{V}}$,
\begin{equation}\label{continuity_of_a_eq}
\left|a({\bf u}, {\bf v})\right|
\le
C_a\left\|{\bf u}\right\|_\mathbb{V}\left\|{\bf v}\right\|_\mathbb{V}.
\end{equation}
\end{lemma}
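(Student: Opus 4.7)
The plan is to bound $|a({\bf u},{\bf v})|$ term by term via the Cauchy--Schwarz inequality and then absorb all factors into a single constant $C_a$ times $\|{\bf u}\|_{1,\Omega}\|{\bf v}\|_{1,\Omega}$. First I would split $a=a^f+a^s$ and handle the five integrals separately, using in each case $\|{\rm D}{\bf w}\|_{0,\Omega}\le 2\|\nabla{\bf w}\|_{0,\Omega}$ (which follows directly from the definition of ${\rm D}{\bf w}$) and the Poincar\'{e} inequality $\|{\bf w}\|_{0,\Omega}\le C_\Omega\|\nabla{\bf w}\|_{0,\Omega}$ already used in \cref{coercivity_of_a}, so that each fluid contribution is controlled by $\|{\bf u}\|_{1,\Omega}\|{\bf v}\|_{1,\Omega}$ with an explicit coefficient depending only on $\alpha$, $\nu^f$ and $C_\Omega$.

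Next I would treat the two solid terms that live on the current configuration $\Omega_{n+1}^s$. Because $\Omega_{n+1}^s\subset\Omega$, trivially $\|\cdot\|_{0,\Omega_{n+1}^s}\le\|\cdot\|_{0,\Omega}$, so after Cauchy--Schwarz these are bounded by constants (depending on $\beta$, $\nu^\delta$ and $C_\Omega$) times $\|{\bf u}\|_{1,\Omega}\|{\bf v}\|_{1,\Omega}$.

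The main obstacle is the last term $\gamma\int_{\Omega_{\bf X}^s}\nabla_{\bf X}{\bf u}:\nabla_{\bf X}{\bf v}\,d{\bf X}$, which is written on the reference configuration rather than on $\Omega$. To handle it I would change variables through the map ${\bf x}(\cdot,t):\Omega_{\bf X}^s\to\Omega_{n+1}^s$ and use the chain rule $\nabla_{\bf X}{\bf u}=(\nabla{\bf u})\,{\bf F}$, giving
\begin{equation*}
\int_{\Omega_{\bf X}^s}|\nabla_{\bf X}{\bf u}|^2\,d{\bf X}
\le \int_{\Omega_{\bf X}^s}\|{\bf F}\|^2\,|\nabla{\bf u}|^2\,d{\bf X}
= \int_{\Omega_{n+1}^s}\|{\bf F}\|^2\,|\nabla{\bf u}|^2\,J^{-1}\,d{\bf x}.
\end{equation*}
The bi-Lipschitz assumption on ${\bf x}(\cdot,t)$ stated in \cref{sec:Control equations} (with constant $c_0$) gives a uniform upper bound on $\|{\bf F}\|$, and together with the corresponding lower bound on $J$ it gives a uniform upper bound on $J^{-1}$; call the product of these two constants $C_{\bf F}$. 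Then $\|\nabla_{\bf X}{\bf u}\|_{0,\Omega_{\bf X}^s}\le C_{\bf F}\|\nabla{\bf u}\|_{0,\Omega_{n+1}^s}\le C_{\bf F}\|{\bf u}\|_{1,\Omega}$, and applying Cauchy--Schwarz to the $\gamma$-term finishes the estimate with a constant proportional to $\gamma C_{\bf F}^2$.

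Adding the five bounds and defining $C_a$ to be the sum of the coefficients obtained (which depends on $\alpha,\beta,\gamma,\nu^f,\nu^\delta,C_\Omega,C_{\bf F}$ but not on ${\bf u}$ or ${\bf v}$) yields \cref{continuity_of_a_eq}. No step is deep; the only non-routine point is keeping careful track of the map ${\bf F}$ when switching between Eulerian and Lagrangian integrals, and here the bi-Lipschitz hypothesis is exactly what makes the argument go through.
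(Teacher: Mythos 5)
Your proof is correct and follows the same overall template as the paper's: termwise Cauchy--Schwarz, the elementary bound $\left\|{\rm D}{\bf w}\right\|_{0,\Omega}\le 2\left\|\nabla{\bf w}\right\|_{0,\Omega}$, the trivial inclusion $\Omega_{n+1}^s\subset\Omega$ for the current-configuration solid terms, and a separate argument for the reference-configuration term, summing the coefficients into $C_a$. The one genuine difference is in that last term. The paper disposes of it with the chain of inequalities \cref{proof_1_2}, $\left\|\nabla_{\bf X}{\bf u}\right\|_{0,\Omega_{\bf X}^s}\le\left\|{\bf u}\right\|_{1,\Omega_{\bf X}^s}\le\left\|{\bf u}\right\|_{1,\Omega}$, with constant $1$; as literally written the second inequality conflates $\nabla_{\bf X}{\bf u}$ with $\nabla{\bf u}$ and would require $|{\bf F}|\le 1$ pointwise. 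You instead perform the change of variables explicitly, using $\nabla_{\bf X}{\bf u}=(\nabla{\bf u}){\bf F}$ and bounds on $\|{\bf F}\|$ and $J^{-1}$, picking up a constant $C_{\bf F}$. This is more careful and is in fact exactly the route the paper itself takes later in the discrete setting (the lemma bounding $\left\|\nabla_{\bf X}{\bf u}^{sh}\right\|_{0,\Omega_{\bf X}^s}$), so your version makes the continuous lemma consistent with its discrete counterpart at the mild cost of a larger, ${\bf F}$-dependent constant. One caveat applies to both you and the paper: the hypothesis stated in \cref{sec:Control equations} is only that ${\bf x}(\cdot,t)$ has a Lipschitz inverse, which controls $\|{\bf F}^{-1}\|$ rather than $\|{\bf F}\|$; a forward Lipschitz (or bi-Lipschitz) assumption is what actually delivers the upper bound on $\|{\bf F}\|$ that both arguments rely on.
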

\begin{proof}
First
\begin{equation}
\begin{split}\label{proof_1_1}
& a({\bf u},{\bf v})
\le \alpha\left\|{\bf u}\right\|_{0,\Omega}\left\|{\bf v}\right\|_{0,\Omega}
+ \frac{\nu^f}{2}\left\|{\rm D}{\bf u}\right\|_{0,\Omega}\left\|{\rm D}{\bf v}\right\|_{0,\Omega} \\
&+\beta\left\|{\bf u}\right\|_{0,\Omega_{n+1}^s}\left\|{\bf v}\right\|_{0,\Omega_{n+1}^s}
+ \frac{\nu^{\delta}}{2}\left\|{\rm D}{\bf u}\right\|_{0,\Omega_{n+1}^s}\left\|{\rm D}{\bf v}\right\|_{0,\Omega_{n+1}^s} \\
&+ \gamma\left\|{\nabla_{\bf X}}{\bf u}\right\|_{0,\Omega_{\bf X}^s}\left\|{\nabla_{\bf X}}{\bf v}\right\|_{0,\Omega_{\bf X}^s}.
\end{split}
\end{equation}
Noticing that
\begin{equation}\label{proof_1_2}
\left\|\nabla_{\bf X}{\bf u}\right\|_{0,\Omega_{\bf X}^s}
\le
\left\|{\bf u}\right\|_{1,\Omega_{\bf X}^s}
\le
\left\|{\bf u}\right\|_{1,\Omega},
\end{equation}
and
\begin{equation*}
\left\|{\rm D}{\bf u}\right\|_{0,\Omega}
\le2\left\|{\nabla}{\bf u}\right\|_{0,\Omega}
\le 2\left\|{\bf u}\right\|_{1,\Omega},
\end{equation*}
we can observe \cref{continuity_of_a_eq} holds from \cref{proof_1_1}, with $C_a=\alpha+\beta+2\nu^f+2\nu^\delta+\gamma$.
\end{proof}

\begin{lemma}\label{continuity_of_f}
$\ell({\bf v})$ is bounded, i.e., there exists a positive real number $C_\ell$, such that for $\forall {\bf v}\in{\mathbb{V}}$,
\begin{equation}\label{continuity_of_f_eq}
\left|\ell({\bf v})\right|\le C_\ell\left\|{\bf v}\right\|_\mathbb{V}.
\end{equation}
\end{lemma}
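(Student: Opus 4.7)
The plan is to bound each of the three integrals in $\ell(\mathbf{v})=\ell^f(\mathbf{v})+\ell^s(\mathbf{v})$ separately by the Cauchy--Schwarz inequality and then reduce every resulting seminorm of $\mathbf{v}$ to the full norm $\|\mathbf{v}\|_{\mathbb{V}}=\|\mathbf{v}\|_{1,\Omega}$. The data $\mathbf{u}_n$ (the velocity field produced by the previous time step) and $\mathbf{F}_n$ (the corresponding deformation gradient) are treated as fixed; their $L^2$ norms are absorbed into the final constant $C_\ell$, exactly as in the proof of \cref{continuity_of_a}.

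First I would split $\ell(\mathbf{v})$ into its three pieces using \cref{lf()}--\cref{ls()}. Cauchy--Schwarz on the first piece gives
$\bigl|\tfrac{2\rho^f}{\Delta t}\int_\Omega\mathbf{u}_n\cdot\mathbf{v}\bigr|\le \tfrac{2\rho^f}{\Delta t}\|\mathbf{u}_n\|_{0,\Omega}\|\mathbf{v}\|_{0,\Omega}$, and $\|\mathbf{v}\|_{0,\Omega}\le\|\mathbf{v}\|_{1,\Omega}$ directly. For the first term of $\ell^s$ the same argument applies on the subdomain $\Omega_{n+1}^s\subset\Omega$, together with the monotonicity $\|\mathbf{v}\|_{0,\Omega_{n+1}^s}\le\|\mathbf{v}\|_{0,\Omega}\le\|\mathbf{v}\|_{\mathbb{V}}$. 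For the last term, Cauchy--Schwarz in $L^2(\Omega_{\bf X}^s)$ produces a factor $\|\nabla_{\bf X}\mathbf{v}\|_{0,\Omega_{\bf X}^s}$, which I would then bound by $\|\mathbf{v}\|_{\mathbb{V}}$ using exactly the estimate \cref{proof_1_2} already established in the proof of \cref{continuity_of_a}.

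Collecting the three bounds, one obtains \cref{continuity_of_f_eq} with
\begin{equation*}
C_\ell=\frac{2\rho^f}{\Delta t}\|\mathbf{u}_n\|_{0,\Omega}+\frac{2\rho^\delta}{\Delta t}\|\mathbf{u}_n\|_{0,\Omega_{n+1}^s}+\mu^s\|\mathbf{F}_n\|_{0,\Omega_{\bf X}^s}.
\end{equation*}
Since none of these quantities depends on $\mathbf{v}$, the constant $C_\ell$ is well defined as soon as $\mathbf{u}_n\in L^2(\Omega)^d$ and $\mathbf{F}_n\in L^2(\Omega_{\bf X}^s)^{d\times d}$, which is guaranteed inductively by the regularity carried over from the previous time step (and by the Lipschitz assumption on $\mathbf{x}(\cdot,t)$ made in \cref{sec:Control equations}, which keeps $\mathbf{F}_n$ bounded). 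There is no genuine obstacle in this proof; the only subtlety worth mentioning is the change of variables implicit in the reference-configuration integral, which is handled identically to the last line of the proof of \cref{continuity_of_a}.
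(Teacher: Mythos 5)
Your proposal is correct and follows essentially the same route as the paper: term-by-term Cauchy--Schwarz, reduction of $\|\nabla_{\bf X}{\bf v}\|_{0,\Omega_{\bf X}^s}$ to $\|{\bf v}\|_{\mathbb{V}}$ via \cref{proof_1_2}, and boundedness of $\|{\bf F}_n\|_{0,\Omega_{\bf X}^s}=\|\nabla_{\bf X}{\bf x}_n\|_{0,\Omega_{\bf X}^s}$ from the Lipschitz-invertibility assumption on ${\bf x}(\cdot,t)$. The only (cosmetic) difference is that the paper merges the two kinetic terms into a single one with coefficient $2\rho^s/\Delta t$ using $\rho^f+\rho^\delta=\rho^s$ and the inclusion $\Omega_{n+1}^s\subset\Omega$, whereas you keep them separate in your explicit constant $C_\ell$.
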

\begin{proof}
\begin{equation*}\label{proof_3_1}
\begin{split}
& \ell({\bf v})
\le \frac{2\rho^f}{\Delta t}\left\|{\bf u}_n\right\|_{0,\Omega}\left\|{\bf v}\right\|_{0,\Omega}
+\frac{2\rho^{\delta}}{\Delta t}\left\|{\bf u}_n\right\|_{0,\Omega_{n+1}^s}\left\|{\bf v}\right\|_{0,\Omega_{n+1}^s}
+\mu^s\left\|{\bf F}_n\right\|_{0,\Omega_{\bf X}^s} \left\|\nabla_{\bf X}{\bf v}\right\|_{0,\Omega_{\bf X}^s} \\
& \le 
\frac{2\rho^s}{\Delta t}\left\|{\bf u}_n\right\|_{0,\Omega}\left\|{\bf v}\right\|_{0,\Omega}
+\mu^s\left\|\nabla_{\bf X}{\bf x}_n\right\|_{0,\Omega_{\bf X}^s} \left\|\nabla_{\bf X}{\bf v}\right\|_{0,\Omega_{\bf X}^s}.
\end{split}
\end{equation*}
Notice that $\left\|\nabla_{\bf X}{\bf x}_n\right\|_{0,\Omega_{\bf X}^s}$ is bounded owing to the assumption that ${\bf x}={\bf x}({\bf s},t)$ (${\bf s}\in \Omega_{\bf X}^s$) is Lipschitz invertible. We further get \cref{continuity_of_f_eq} using relation \cref{proof_1_2}. 
\end{proof} 

\begin{proposition}[Inf-Sup Condition] There exist $c_b>0$ such that
\begin{equation}
\inf_{q\in\mathbb{P}}\sup_{{\bf v}\in\mathbb{V}}\frac{b({\bf v},q)}{\|{\bf v}\|_{\mathbb{V}}\|q\|_{\mathbb{P}}}
\ge c_b.
\end{equation}
\end{proposition}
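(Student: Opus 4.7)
The statement is the classical Ladyzhenskaya–Babu\v{s}ka–Brezzi (LBB) condition for the divergence operator on the fixed outer domain $\Omega$. Crucially, the bilinear form $b(\cdot,\cdot)$ depends only on the global velocity–pressure pairing on $\Omega$, and not at all on the solid configuration $\Omega_{n+1}^s$, on $\Omega_{\bf X}^s$, nor on the step-size parameters $\alpha,\beta,\gamma,\nu^f,\nu^\delta$. The plan, therefore, is to reduce the inf-sup bound entirely to the well-known surjectivity of the divergence map from $H_0^1(\Omega)^d$ onto $L_0^2(\Omega)$.

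First I would fix an arbitrary nonzero $q\in\mathbb{P}=L_0^2(\Omega)$, so that $q$ has zero mean on $\Omega$. I would then invoke the classical result (obtainable from a Bogovski\u\i{} construction, or equivalently from Ne\v{c}as' lemma on bounded Lipschitz domains) that the divergence
\begin{equation*}
\nabla\cdot:H_0^1(\Omega)^d\longrightarrow L_0^2(\Omega)
\end{equation*}
is surjective and admits a bounded right inverse: there exists a constant $C_\Omega'>0$, depending only on the geometry of $\Omega$, and a function ${\bf v}_q\in H_0^1(\Omega)^d$ satisfying $\nabla\cdot{\bf v}_q = -q$ in $\Omega$ together with the bound $\|{\bf v}_q\|_{1,\Omega}\le C_\Omega'\|q\|_{0,\Omega}$. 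This is the only nontrivial ingredient in the argument, and I would cite it rather than reprove it; it is the \emph{main obstacle} in the sense that without it the whole claim collapses.

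With such a ${\bf v}_q$ in hand the remainder is a one-line calculation. Substituting into the definition \cref{b()} gives
\begin{equation*}
b({\bf v}_q,q)=-\int_{\Omega}q\,\nabla\cdot{\bf v}_q\,d{\bf x}=\int_{\Omega}q^2\,d{\bf x}=\|q\|_{0,\Omega}^2=\|q\|_\mathbb{P}^2,
\end{equation*}
and therefore
\begin{equation*}
\sup_{{\bf v}\in\mathbb{V}}\frac{b({\bf v},q)}{\|{\bf v}\|_\mathbb{V}\|q\|_\mathbb{P}}
\ge \frac{b({\bf v}_q,q)}{\|{\bf v}_q\|_\mathbb{V}\|q\|_\mathbb{P}}
\ge \frac{\|q\|_{0,\Omega}^2}{C_\Omega'\,\|q\|_{0,\Omega}\,\|q\|_\mathbb{P}}=\frac{1}{C_\Omega'}.
\end{equation*}
Since the right-hand side is independent of $q$, taking the infimum over $q\in\mathbb{P}\setminus\{0\}$ yields the inf-sup condition with constant $c_b=1/C_\Omega'>0$. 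I would close by remarking that $c_b$ is purely geometric and in particular uniform in $n$, which will be important when this inf-sup bound is later combined with \cref{coercivity_of_a,continuity_of_a,continuity_of_f} to conclude well-posedness of \cref{problem_bilinear_form} via the standard Brezzi saddle-point theorem.
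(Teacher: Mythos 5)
Your proof is correct and follows essentially the same route as the paper, which simply cites the classical result in Brenner--Scott (Section 12.2); the surjectivity of $\nabla\cdot:H_0^1(\Omega)^d\to L_0^2(\Omega)$ with a bounded right inverse is exactly the content of that reference, and your reduction of the inf-sup bound to it is the standard argument. Your added observation that $c_b$ depends only on the geometry of $\Omega$ and is uniform in $n$ is a worthwhile point that the paper leaves implicit.
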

\begin{proof}
can be found in \cite [Section 12.2] {brenner2007mathematical}.
\end{proof}

The above three lemmas and the inf-sup condition imply \cite [Lemma 12.2.12] {brenner2007mathematical}:
\begin{theorem}
\cref{problem_bilinear_form} has a unique solution $({\bf u}, p)\in \mathbb{V}\times\mathbb{P}$.
\end{theorem}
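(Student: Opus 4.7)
The plan is to recognize \cref{problem_bilinear_form} as a saddle-point problem of Brezzi type on $\mathbb{V}\times\mathbb{P}$ and to apply the generalized Lax--Milgram theorem for mixed problems (specifically, Lemma 12.2.12 of Brenner--Scott, which is already cited immediately before the statement). So the proof is really a matter of verifying that all four ingredients of that lemma are on the table.

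First I would list the hypotheses we need: (i) $a(\cdot,\cdot)$ is a bounded bilinear form on $\mathbb{V}\times\mathbb{V}$; (ii) $a(\cdot,\cdot)$ is coercive on $\mathbb{V}$ (coercivity on the whole space, not merely on $\ker b$, is stronger than required and makes the mixed theory directly applicable); (iii) $b(\cdot,\cdot)$ is a bounded bilinear form on $\mathbb{V}\times\mathbb{P}$ satisfying the inf-sup condition; (iv) $\ell$ is a bounded linear functional on $\mathbb{V}$. Then I would point out that \cref{coercivity_of_a} supplies (ii), \cref{continuity_of_a} supplies (i), \cref{continuity_of_f} supplies (iv), and the Inf-Sup Condition established just above supplies the inf-sup half of (iii).

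The only piece not explicitly recorded as a lemma in the excerpt is the continuity of $b$, so I would insert a one-line verification: by the Cauchy--Schwarz inequality,
\begin{equation*}
|b({\bf v},q)|
=\Bigl|\int_\Omega q\,\nabla\cdot{\bf v}\,d{\bf x}\Bigr|
\le \|q\|_{0,\Omega}\,\|\nabla\cdot{\bf v}\|_{0,\Omega}
\le \sqrt{d}\,\|q\|_{\mathbb{P}}\,\|{\bf v}\|_{\mathbb{V}},
\end{equation*}
so $b$ is bounded with constant $\sqrt{d}$. With all four hypotheses verified, the cited lemma produces a unique pair $({\bf u},p)\in\mathbb{V}\times\mathbb{P}$ solving the system, together with an a priori stability estimate in terms of $\|\ell\|_{\mathbb{V}'}$.

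I do not anticipate any real obstacle: all heavy lifting (the Korn/Poincar\'e-type bound giving coercivity, the control of the Lagrangian term $\gamma\|\nabla_{\bf X}{\bf u}\|_{0,\Omega_{\bf X}^s}^2$ using the Lipschitz invertibility of ${\bf x}(\cdot,t)$, and the classical inf-sup for the divergence pairing on $H_0^1(\Omega)^d\times L_0^2(\Omega)$) has been dispatched in the preceding lemmas. The mild subtlety worth flagging is that coercivity of $a$ on all of $\mathbb{V}$ (rather than only on the kernel of $b$) is what allows a direct invocation of the Brezzi framework without having to build a velocity lift, so the proof reduces to assembling the pieces and quoting Brenner--Scott.
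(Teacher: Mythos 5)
Your proposal is correct and follows essentially the same route as the paper, which simply assembles the preceding lemmas (coercivity and boundedness of $a$, boundedness of $\ell$, and the inf-sup condition) and cites Lemma 12.2.12 of Brenner--Scott. Your added one-line Cauchy--Schwarz verification of the continuity of $b$ is a harmless (and arguably welcome) supplement to a hypothesis the paper leaves implicit.
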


%%%%%%%%%%
\subsection{Discretization in space}\label{Discretization in space}
We shall use a fixed Eulerian mesh for $\Omega$ and an updated Lagrangian mesh for $\Omega_{n+1}^s$ to discretize \cref{problem_bilinear_form}. First, we discretize $\Omega$ as $\Omega^h$ with the corresponding finite element spaces as
$$
V^h(\Omega^h)=span\left\{\varphi_1,\cdots,\varphi_{N^u}\right\} \subset H_0^1\left(\Omega\right)
$$
and
$$
L^h(\Omega^h)=span\left\{\phi_1,\cdots,\phi_{N^p}\right\} \subset L_0^2\left(\Omega\right).
$$
The approximated solution ${\bf u}^h$ and $p^h$ can be expressed in terms of these basis functions as
\begin{equation}\label{uh}
{\bf u}^h({\bf x})=\sum_{i=1}^{N^u}{\bf u}({\bf x}_i)\varphi_i({\bf x}), \quad
p^h({\bf x})=\sum_{i=1}^{N^p}p({\bf x}_i)\phi_i({\bf x}).
\end{equation}

We further discretize $\Omega_{n+1}^s$ as $\Omega_{n+1}^{sh}$ with the corresponding finite element spaces as:
$$
V^{sh}(\Omega_{n+1}^{sh})=span\left\{\varphi_1^s,\cdots,\varphi_{N^s}^s\right\} \subset H^1\left(\Omega_{n+1}^s\right),
$$
and approximate $\left.{\bf u}^h({\bf x})\right|_{{\bf x}\in\Omega_{n+1}^{sh}}$ as:
\begin{equation}\label{ush}
{\bf u}^{sh}\left({\bf x}\right)
=\sum_{i=1}^{N^s}{\bf u}^h({\bf x}_i^s)\varphi_i^s({\bf x})
=\sum_{i=1}^{N^s}\sum_{j=1}^{N^u}{\bf u}({\bf x}_j)\varphi_j({\bf x}_i^s)\varphi_i^s({\bf x}),
\end{equation}
where ${\bf x}_i^s$ is the nodal coordinate of the solid mesh. Notice that the above approximation defines an $L^2$ projection $P_{n+1}$ from $V^h\left(\Omega^h\right)^d$ to $V^{sh}\left(\Omega_{n+1}^{sh}\right)^d$: $P_{n+1}\left({\bf u}^h({\bf x})\right)={\bf u}^{sh}\left({\bf x}\right)$.

\begin{remark}
The solid domain $\Omega_t^s$ is actually discretized once on $\Omega_0^s$, and then $\Omega_n^s (n\ge 1)$ is updated from the mesh at the previous time step as illustrated in \cref{update_of_solid_mesh}.
\end{remark}

We then discretize \cref{problem_bilinear_form} as follows.
\begin{problem}\label{discretized_problem_bilinear_form}
Find ${\bf u}^h\in \mathbb{V}^h$, $p^h\in \mathbb{P}^h$, such that
\begin{equation*}
\left \{
\begin{tabular}{lr}
$a^h({\bf u}^h,{\bf v}^h)+b({\bf v}^h,p^h)=\ell^h({\bf v}^h)$ & $\forall{\bf v}^h\in \mathbb{V}^h$ \\
$b({\bf u}^h,q^h)=0$ & $\forall q^h\in \mathbb{P}^h$
\end{tabular}
\right.
,
\end{equation*}
where $\mathbb{V}^h=V^h(\Omega^h)^d$, $\mathbb{P}^h=L^h(\Omega^h)$, $a^h$ can be expressed as:
\begin{equation}\label{ah()}
a^h\left({\bf u}^h,{\bf v}^h\right)=a^f\left({\bf u}^h,{\bf v}^h\right)+a^s\left({\bf u}^{sh},{\bf v}^{sh}\right),
\end{equation}
and $\ell^h$ can be expressed as:
\begin{equation}
\ell^h({\bf v}^h)\label{lh()}
=\ell^f({\bf v}^h)+\ell^s({\bf v}^{sh}).
\end{equation}	
where ${\bf u}^{sh}=P_{n+1}\left({\bf u}^h\right)$ and ${\bf v}^{sh}=P_{n+1}\left({\bf v}^h\right)$.
\end{problem}

From the error estimate for interpolation \cite [Chapter 4, Corollary 4.4.24] {brenner2007mathematical}, the approximation \cref{ush} can be bounded by:
\begin{equation}\label{interpolation_erro_estimate}
\left\| P_{n+1}\left({\bf u}^h({\bf x})\right)-{\bf u}^h({\bf x}) \right\|_{1,\Omega_{n+1}^s}
\le
C_h\left\|\nabla{\bf u}^h({\bf x})\right\|_{0,\Omega_{n+1}^s},
\end{equation}
where $C_h$ is a constant depending on the element of $\Omega^{sh}$.
In order to prove \cref{discretized_problem_bilinear_form} is well-posed, we still need to prove the continuity of $a^h({\bf u}^h,{\bf v}^h)$ and $\ell^h({\bf v}^h)$, and the coercivity of $a^h({\bf u}^h,{\bf v}^h)$. The main idea is to bound ${\bf u}^{sh}$, $\nabla_{\bf X}{\bf u}^{sh}$ and ${\rm D}{\bf u}^{sh}$ in $\mathbb{V}$, which can be achieved by the above interpolation error estimate.

\begin{lemma}
There exists a positive real number $c_1$ such that
\begin{equation}\label{estimate_of_ush}
\left\|{\bf u}^{sh}({\bf x})\right\|_{0,\Omega_{n+1}^s}
\le
c_1\left\|{\bf u}^h({\bf x}) \right\|_{1,\Omega}.
\end{equation}
\end{lemma}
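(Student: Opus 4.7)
The plan is to bound $\|{\bf u}^{sh}\|_{0,\Omega_{n+1}^s}$ by splitting it into a piece controlled by the $L^2$ projection error and a piece that is directly dominated by the full-domain $H^1$ norm. The natural tool is the triangle inequality applied to ${\bf u}^{sh} = P_{n+1}({\bf u}^h)$:
\begin{equation*}
\left\|{\bf u}^{sh}\right\|_{0,\Omega_{n+1}^s}
\le \left\|{\bf u}^{sh}-{\bf u}^h\right\|_{0,\Omega_{n+1}^s}
+\left\|{\bf u}^h\right\|_{0,\Omega_{n+1}^s}.
\end{equation*}

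First, I would estimate the projection error term. Since $\|\cdot\|_{0,\Omega_{n+1}^s}\le\|\cdot\|_{1,\Omega_{n+1}^s}$, the interpolation estimate \cref{interpolation_erro_estimate} immediately gives
\begin{equation*}
\left\|{\bf u}^{sh}-{\bf u}^h\right\|_{0,\Omega_{n+1}^s}
\le \left\|P_{n+1}({\bf u}^h)-{\bf u}^h\right\|_{1,\Omega_{n+1}^s}
\le C_h\left\|\nabla{\bf u}^h\right\|_{0,\Omega_{n+1}^s}
\le C_h\left\|{\bf u}^h\right\|_{1,\Omega},
\end{equation*}
where the last step uses $\Omega_{n+1}^s\subset\Omega$ and the definition of the $H^1$ norm.

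Second, for the remaining term, again because $\Omega_{n+1}^s\subset\Omega$, I would simply write
\begin{equation*}
\left\|{\bf u}^h\right\|_{0,\Omega_{n+1}^s}
\le \left\|{\bf u}^h\right\|_{0,\Omega}
\le \left\|{\bf u}^h\right\|_{1,\Omega}.
\end{equation*}
Adding the two bounds yields \cref{estimate_of_ush} with $c_1 = C_h+1$.

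There is no real obstacle here: the lemma is a direct consequence of the cited interpolation inequality together with domain monotonicity of $L^2$ and $H^1$ norms. The only subtlety worth flagging is that $C_h$ depends on the mesh of $\Omega_{n+1}^{sh}$ (through the element-wise interpolation constant), so $c_1$ inherits this mesh dependence; this is consistent with the statement of the lemma, which does not claim mesh independence.
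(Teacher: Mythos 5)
Your proof is correct and follows essentially the same route as the paper: a triangle inequality combined with the interpolation estimate \cref{interpolation_erro_estimate} and domain monotonicity, yielding the same constant $c_1=1+C_h$. The only cosmetic difference is that the paper applies the triangle inequality in the $H^1(\Omega_{n+1}^s)$ norm first (establishing the stronger bound \cref{estimate_from_interpolation}, which it then reuses in the two subsequent lemmas) and only drops to the $L^2$ norm at the end, whereas you work in $L^2$ throughout.
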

\begin{proof}
From \cref{interpolation_erro_estimate}, we have:
\begin{equation}\label{estimate_from_interpolation}
\begin{split}
&\left\|{\bf u}^{sh}({\bf x})\right\|_{1,\Omega_{n+1}^s}
=\left\|P_{n+1}\left({\bf u}^h({\bf x})\right)\right\|_{1,\Omega_{n+1}^s}\\
&\le \left\|{\bf u}^h({\bf x}) \right\|_{1,\Omega_{n+1}^s}
+\left\| P_{n+1}\left({\bf u}^h({\bf x})\right)-{\bf u}^h({\bf x}) \right\|_{1,\Omega_{n+1}^s} \\
& \le \left\|{\bf u}^h({\bf x}) \right\|_{1,\Omega_{n+1}^s}
+C_h\left\|\nabla{\bf u}^h({\bf x})\right\|_{0,\Omega_{n+1}^s}\\
&\le (1+C_h)\left\|{\bf u}^h({\bf x}) \right\|_{1,\Omega}.
\end{split}
\end{equation}
We have \cref{estimate_of_ush} due to $\left\|{\bf u}^{sh}({\bf x})\right\|_{0,\Omega_{n+1}^s}
\le \left\|{\bf u}^{sh}({\bf x})\right\|_{1,\Omega_{n+1}^s}$.
\end{proof}

\begin{lemma}
There exists a positive real number $c_2$ such that
\begin{equation}
\left\|\nabla_{\bf X}{\bf u}^{sh}\right\|_{0,\Omega_{\bf X}^s}
\le
c_2\left\|{\bf u}^h\right\|_{1,\Omega}.
\end{equation}
\end{lemma}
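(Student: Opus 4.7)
The plan is to reduce the $\Omega_{\bf X}^s$-integral to an integral over the current configuration $\Omega_{n+1}^s$, where the interpolation estimate already derived in \cref{estimate_from_interpolation} can be invoked directly. The two ingredients are the chain rule, which converts ${\nabla}_{\bf X}$ into $\nabla_{\bf x}$ through the deformation gradient, and the change of variables $d{\bf X}=J_{n+1}^{-1}d{\bf x}$ with $J_{n+1}=\det{\bf F}_{n+1}$.

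First I would write $\nabla_{\bf X}{\bf u}^{sh}=(\nabla_{\bf x}{\bf u}^{sh}){\bf F}_{n+1}$ so that pointwise $|\nabla_{\bf X}{\bf u}^{sh}|\le |{\bf F}_{n+1}|\,|\nabla_{\bf x}{\bf u}^{sh}|$, and then pull the integral back to $\Omega_{n+1}^s$ to obtain an estimate of the form
\[
\|\nabla_{\bf X}{\bf u}^{sh}\|_{0,\Omega_{\bf X}^s}^2
\le \int_{\Omega_{n+1}^s}|{\bf F}_{n+1}|^2\,J_{n+1}^{-1}\,|\nabla_{\bf x}{\bf u}^{sh}|^2\,d{\bf x}.
\]
The Lipschitz-invertibility assumption $\|{\bf x}({\bf s}_1,t)-{\bf x}({\bf s}_2,t)\|_2\ge c_0\|{\bf s}_1-{\bf s}_2\|_2$ stated in \cref{sec:Control equations} bounds ${\bf F}_{n+1}^{-1}$ uniformly by $c_0^{-1}$, and hence yields $J_{n+1}\ge c_0^d$, so that $J_{n+1}^{-1}\le c_0^{-d}$. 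Together with the regularity of the solid motion (and, in the incompressible setting considered here, the identity $J_{n+1}=1$), this also gives a uniform upper bound $|{\bf F}_{n+1}|\le M$ on $\Omega_{\bf X}^s$.

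Finally I would apply the bound \cref{estimate_from_interpolation} already proved in the preceding lemma, namely
\[
\|\nabla_{\bf x}{\bf u}^{sh}\|_{0,\Omega_{n+1}^s}\le \|{\bf u}^{sh}\|_{1,\Omega_{n+1}^s}\le (1+C_h)\|{\bf u}^h\|_{1,\Omega},
\]
to conclude the desired estimate with a constant of the form $c_2=Mc_0^{-d/2}(1+C_h)$.

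The main obstacle is to pin down a clean uniform upper bound on $|{\bf F}_{n+1}|$: the problem statement only supplies an explicit lower bound on the deformation (the constant $c_0$ controlling ${\bf F}_{n+1}^{-1}$), so the corresponding upper bound has to be drawn either from the assumed smoothness of the map ${\bf x}(\cdot,t)$ or from the incompressibility constraint; once that constant is in hand, the chain rule, change of variables, and invocation of \cref{estimate_from_interpolation} are routine.
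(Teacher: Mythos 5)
Your proposal follows essentially the same route as the paper: pull the integral back to $\Omega_{n+1}^s$ via the chain rule $\nabla_{\bf X}{\bf u}^{sh}={\bf F}\,\nabla{\bf u}^{sh}$ and the Jacobian factor, bound the deformation gradient by a constant $C_F$, and finish with the interpolation estimate \cref{estimate_from_interpolation}. If anything, your pointwise bound $|{\bf F}_{n+1}|\le M$ makes the H\"older step cleaner than the paper's product of $L^2$ norms, and your observation that the Lipschitz-invertibility assumption only controls ${\bf F}^{-1}$ (so the upper bound on ${\bf F}$ must come from additional regularity of the motion) is a fair point that the paper glosses over.
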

\begin{proof}
Using \cref{estimate_from_interpolation} we have:
\begin{equation*}\label{proof_1_2_discretization}
\begin{split}
&\left\|\nabla_{\bf X}{\bf u}^{sh}\right\|_{0,\Omega_{\bf X}^s}
=\left\|J^{-1}\nabla_{\bf X}{\bf u}^{sh}\right\|_{0,\Omega_{n+1}^s} 
=\left\|J^{-1}{\bf F}\nabla {\bf u}^{sh}\right\|_{0,\Omega_{n+1}^s}\\
&\le
\left\|J^{-1}{\bf F}\right\|_{0,\Omega_{n+1}^s}
\left\|\nabla{\bf u}^{sh}\right\|_{0,\Omega_{n+1}^s} 
=\left\|{\bf F}\right\|_{0,\Omega_{\bf X}^s}
\left\|\nabla{\bf u}^{sh}\right\|_{0,\Omega_{n+1}^s} \\
&\le
C_F\left\|\nabla{\bf u}^{sh}\right\|_{0,\Omega_{n+1}^s}
\le
C_F\left\|{\bf u}^{sh}\right\|_{1,\Omega_{n+1}^s}
\le
C_F\left(1+C_h\right)\left\|{\bf u}^h\right\|_{1,\Omega}.
\end{split}
\end{equation*}
In the above, $\left\|{\bf F}\right\|_{0,\Omega_{\bf X}^s}=\left\|\nabla_{\bf X}{\bf x}_{n+1}\right\|_{0,\Omega_{\bf X}^s}
\le C_F$ thanks to the Lipschitz inversible assumption of ${\bf x}={\bf x}\left({\bf s}, t\right)$ (${\bf s}\in\Omega_{\bf X}^s$).
\end{proof}

\begin{lemma}
There exists a positive real number $c_3$ such that
\begin{equation}
\left\|{\rm D}{\bf u}^{sh}\right\|_{0,\Omega_{n+1}^s}
\le
c_3\left\|{\bf u}^h\right\|_{1,\Omega}.
\end{equation}
\end{lemma}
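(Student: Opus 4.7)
The strategy mirrors the two preceding lemmas: bound the symmetric gradient pointwise by the gradient, pass from the solid mesh to the embedding bilinear functional via the interpolation estimate \cref{interpolation_erro_estimate}, and then extend from $\Omega_{n+1}^s$ to $\Omega$.

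\textbf{Key steps.} First I would use the elementary inequality $\|{\rm D}{\bf w}\|_{0,\omega}\le 2\|\nabla{\bf w}\|_{0,\omega}$ (already employed in the proof of \cref{continuity_of_a}) with ${\bf w}={\bf u}^{sh}$ and $\omega=\Omega_{n+1}^s$, giving
\begin{equation*}
\|{\rm D}{\bf u}^{sh}\|_{0,\Omega_{n+1}^s}\le 2\|\nabla{\bf u}^{sh}\|_{0,\Omega_{n+1}^s}\le 2\|{\bf u}^{sh}\|_{1,\Omega_{n+1}^s}.
\end{equation*}
Next I would invoke the chain of estimates already assembled in \cref{estimate_from_interpolation}, namely $\|{\bf u}^{sh}\|_{1,\Omega_{n+1}^s}=\|P_{n+1}({\bf u}^h)\|_{1,\Omega_{n+1}^s}\le(1+C_h)\|{\bf u}^h\|_{1,\Omega}$, which itself combines the triangle inequality with the interpolation bound \cref{interpolation_erro_estimate}. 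Chaining the two estimates yields the claim with $c_3=2(1+C_h)$.

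\textbf{Main obstacle.} There is essentially no technical obstacle here: all the nontrivial work (the interpolation estimate, the restriction from $\Omega$ to $\Omega_{n+1}^s$, and the handling of the projection $P_{n+1}$) has already been done in the previous lemma. The only point that requires a line of justification is the elementary pointwise bound $\|{\rm D}{\bf w}\|_{0,\omega}^2\le 4\|\nabla{\bf w}\|_{0,\omega}^2$, which follows from expanding $\|\nabla{\bf w}+\nabla^{\mathrm T}{\bf w}\|_{0,\omega}^2$ and applying the Cauchy--Schwarz inequality (or the inequality $(a+b)^2\le 2(a^2+b^2)$) componentwise. The resulting constant $c_3$ depends only on the shape regularity of the solid mesh, entering through $C_h$.
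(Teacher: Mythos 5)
Your proof is correct and is essentially identical to the paper's: the authors likewise write $\left\|{\rm D}{\bf u}^{sh}\right\|_{0,\Omega_{n+1}^s}\le 2\left\|\nabla{\bf u}^{sh}\right\|_{0,\Omega_{n+1}^s}\le 2\left\|{\bf u}^{sh}\right\|_{1,\Omega_{n+1}^s}\le 2\left(1+C_h\right)\left\|{\bf u}^h\right\|_{1,\Omega}$, invoking \cref{estimate_from_interpolation}. Nothing further is needed.
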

\begin{proof}
Using \cref{estimate_from_interpolation} we have:
\begin{equation*}
\left\|{\rm D}{\bf u}^{sh}\right\|_{0,\Omega_{n+1}^s}
\le
2\left\|\nabla{\bf u}^{sh}\right\|_{0,\Omega_{n+1}^s}
\le
2\left\|{\bf u}^{sh}\right\|_{1,\Omega_{n+1}^s}
\le
2\left(1+C_h\right)\left\|{\bf u}^h\right\|_{1,\Omega}.
\end{equation*}
\end{proof}

From the definition of $a^s({\bf u}, {\bf v})$ \cref{as()} and $\ell^s({\bf v})$ \cref{ls()}, using the above three lemmas, we have the following two corollaries.
\begin{corollary}\label{continuity_of_as}
$a^s({\bf u}^{sh},{\bf v}^{sh})$ is bounded in $\mathbb{V}^h$, i.e., there exists a positive real number $C_{s}^h$, such that for $\forall {\bf u}^h, {\bf v}^h\in {\mathbb{V}^h}$,
\begin{equation}\label{continuity_of_as_eq}
\left|a^s({\bf u}^{sh}, {\bf v}^{sh})\right|
\le
C_s^h\left\|{\bf u}^h\right\|_\mathbb{V}\left\|{\bf v}^h\right\|_\mathbb{V}.
\end{equation}
\end{corollary}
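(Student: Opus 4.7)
The plan is to mirror the continuous-level proof of \cref{continuity_of_a}, replacing the elementary estimates on $\|{\bf u}\|_{0,\Omega_{n+1}^s}$, $\|{\rm D}{\bf u}\|_{0,\Omega_{n+1}^s}$, and $\|\nabla_{\bf X}{\bf u}\|_{0,\Omega_{\bf X}^s}$ by the three preceding lemmas that control the $L^2$-projected trial function ${\bf u}^{sh}=P_{n+1}({\bf u}^h)$ in terms of the full-domain norm $\|{\bf u}^h\|_{\mathbb{V}}=\|{\bf u}^h\|_{1,\Omega}$. Since the three terms in $a^s(\cdot,\cdot)$ are each obtained by pairing one of these three quantities for ${\bf u}^{sh}$ against the corresponding quantity for ${\bf v}^{sh}$, the strategy is essentially Cauchy--Schwarz termwise.

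Concretely, I would start from \cref{as()} written for the discrete arguments,
\begin{equation*}
a^s({\bf u}^{sh},{\bf v}^{sh})
=\beta\int_{\Omega_{n+1}^s}{\bf u}^{sh}\cdot{\bf v}^{sh}d{\bf x}
+\frac{\nu^\delta}{2}\int_{\Omega_{n+1}^s}{\rm D}{\bf u}^{sh}:{\rm D}{\bf v}^{sh}d{\bf x}
+\gamma\int_{\Omega_{\bf X}^s}\nabla_{\bf X}{\bf u}^{sh}:\nabla_{\bf X}{\bf v}^{sh}d{\bf X},
\end{equation*}
and apply Cauchy--Schwarz in each of the three integrals to obtain
\begin{equation*}
|a^s({\bf u}^{sh},{\bf v}^{sh})|
\le \beta\|{\bf u}^{sh}\|_{0,\Omega_{n+1}^s}\|{\bf v}^{sh}\|_{0,\Omega_{n+1}^s}
+\frac{\nu^\delta}{2}\|{\rm D}{\bf u}^{sh}\|_{0,\Omega_{n+1}^s}\|{\rm D}{\bf v}^{sh}\|_{0,\Omega_{n+1}^s}
+\gamma\|\nabla_{\bf X}{\bf u}^{sh}\|_{0,\Omega_{\bf X}^s}\|\nabla_{\bf X}{\bf v}^{sh}\|_{0,\Omega_{\bf X}^s}.
\end{equation*}

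Next I would substitute the three preceding lemmas into each factor. The bound on $\|{\bf u}^{sh}\|_{0,\Omega_{n+1}^s}$ (and similarly for ${\bf v}^{sh}$) gives a factor of $c_1\|{\bf u}^h\|_{\mathbb{V}}$; the bound on $\|{\rm D}{\bf u}^{sh}\|_{0,\Omega_{n+1}^s}$ gives $c_3\|{\bf u}^h\|_{\mathbb{V}}$; and the bound on $\|\nabla_{\bf X}{\bf u}^{sh}\|_{0,\Omega_{\bf X}^s}$ gives $c_2\|{\bf u}^h\|_{\mathbb{V}}$. Collecting these yields \cref{continuity_of_as_eq} with, for example,
\begin{equation*}
C_s^h=\beta c_1^2+\frac{\nu^\delta}{2}c_3^2+\gamma c_2^2.
\end{equation*}

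There is no real obstacle here: all the substantive work (in particular the control of the projection error that lets $\Omega$-norms dominate $\Omega_{n+1}^s$-norms of ${\bf u}^{sh}$, and the Lipschitz-inverse assumption used to bound $\|{\bf F}\|_{0,\Omega_{\bf X}^s}$) has already been absorbed into the three preceding lemmas. The only thing to be careful about is to state explicitly that $C_s^h$ may depend on the solid mesh parameter (through $C_h$ in \cref{interpolation_erro_estimate}) and on the deformation bound $C_F$, which is why the constant is decorated with a superscript $h$ rather than being purely mesh-independent.
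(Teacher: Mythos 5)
Your proposal is correct and matches the paper's intended argument exactly: the paper states this corollary without a written proof, simply noting that it follows ``from the definition of $a^s$ \ldots using the above three lemmas,'' and your termwise Cauchy--Schwarz followed by substitution of the three lemmas (with $C_s^h=\beta c_1^2+\frac{\nu^\delta}{2}c_3^2+\gamma c_2^2$) is precisely that omitted argument.
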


\begin{corollary}\label{continuity_of_ls}
	$\ell^s({\bf v}^{sh})$ is bounded in $\mathbb{V}^h$, i.e., there exists a positive real number $C_{\ell}^h$, such that for $\forall {\bf v}^h\in {\mathbb{V}^h}$,
	\begin{equation}\label{continuity_of_ls_eq}
	\left|\ell^s({\bf v}^{sh})\right|
	\le
	C_\ell^h\left\|{\bf v}^h\right\|_\mathbb{V}.
	\end{equation}
\end{corollary}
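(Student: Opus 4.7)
The plan is to reduce this corollary to the continuity bound for $\ell^s$ already established on the continuous level (in the proof of \cref{continuity_of_f}), using the new discrete estimates for $\|{\bf v}^{sh}\|_{0,\Omega_{n+1}^s}$ and $\|\nabla_{\bf X}{\bf v}^{sh}\|_{0,\Omega_{\bf X}^s}$ provided by the two immediately preceding lemmas.

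First I would expand $\ell^s({\bf v}^{sh})$ from definition \cref{ls()} into its two pieces and apply the Cauchy--Schwarz inequality term by term, giving
$$
|\ell^s({\bf v}^{sh})| \le \tfrac{2\rho^\delta}{\Delta t}\|{\bf u}_n\|_{0,\Omega_{n+1}^s}\|{\bf v}^{sh}\|_{0,\Omega_{n+1}^s} + \mu^s \|{\bf F}_n\|_{0,\Omega_{\bf X}^s}\|\nabla_{\bf X}{\bf v}^{sh}\|_{0,\Omega_{\bf X}^s}.
$$
Next, I would invoke the two preceding lemmas to replace the factors involving ${\bf v}^{sh}$: namely $\|{\bf v}^{sh}\|_{0,\Omega_{n+1}^s}\le c_1\|{\bf v}^h\|_{1,\Omega}$ and $\|\nabla_{\bf X}{\bf v}^{sh}\|_{0,\Omega_{\bf X}^s}\le c_2\|{\bf v}^h\|_{1,\Omega}$. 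The remaining factors $\|{\bf u}_n\|_{0,\Omega_{n+1}^s}$ and $\|{\bf F}_n\|_{0,\Omega_{\bf X}^s}$ are pieces of given data from the previous time step: the first is trivially finite, and the second is bounded by the Lipschitz-invertibility assumption on ${\bf x}(\cdot,t)$ already used in the proof of \cref{continuity_of_f} (where $\|\nabla_{\bf X}{\bf x}_n\|_{0,\Omega_{\bf X}^s}\le C_F$).

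Combining these bounds yields the estimate \cref{continuity_of_ls_eq} with
$$
C_\ell^h = \tfrac{2\rho^\delta}{\Delta t}c_1\|{\bf u}_n\|_{0,\Omega_{n+1}^s} + \mu^s c_2 C_F,
$$
which depends on $h$ only through the constants $c_1,c_2$ (and hence through $C_h$ from \cref{interpolation_erro_estimate}). I do not anticipate any genuine obstacle here; the work has essentially all been done in the two lemmas immediately preceding the corollary, and the argument is a direct mimicry of the continuous proof of \cref{continuity_of_f}, the only new ingredient being the replacement of the trivial inclusion $\|{\bf v}\|_{1,\Omega_{\bf X}^s}\le\|{\bf v}\|_{1,\Omega}$ by the discrete interpolation estimates for the projection $P_{n+1}$. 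The only point that warrants a line of care is making sure that $\ell^s$ really is evaluated at ${\bf v}^{sh}=P_{n+1}({\bf v}^h)$ (and not at ${\bf v}^h$ directly), so that the new lemmas apply verbatim.
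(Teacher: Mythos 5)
Your argument is correct and is exactly the route the paper intends: the paper gives no separate proof of this corollary, stating only that it follows from the definition \cref{ls()} together with the preceding lemmas, which is precisely your Cauchy--Schwarz expansion followed by the discrete bounds $\left\|{\bf v}^{sh}\right\|_{0,\Omega_{n+1}^s}\le c_1\left\|{\bf v}^h\right\|_{1,\Omega}$ and $\left\|\nabla_{\bf X}{\bf v}^{sh}\right\|_{0,\Omega_{\bf X}^s}\le c_2\left\|{\bf v}^h\right\|_{1,\Omega}$ and the boundedness of the data terms. Your closing remark about evaluating $\ell^s$ at ${\bf v}^{sh}=P_{n+1}({\bf v}^h)$ rather than ${\bf v}^h$ is the right point of care and matches the definition \cref{lh()}.
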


\begin{lemma}\label{coercivity_of_ah}
$a^h({\bf u}^h, {\bf v}^h)$ is coercive, i.e., there exists a positive real number $c_a^h$, such that for $\forall {\bf u}^h\in\mathbb{V}^h$,
\begin{equation}\label{coercivity_of_ah_eq}
a^h\left({\bf u}^h, {\bf u}^h\right)
\ge c_a^h\left\|{\bf u}^h\right\|_\mathbb{V}^2.
\end{equation}
\end{lemma}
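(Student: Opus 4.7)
The plan is to mirror the continuous coercivity argument (\cref{coercivity_of_a}), exploiting the fact that under the simplifying assumptions $\rho^\delta \ge 0$, $\nu^\delta \ge 0$ (and hence $\beta \ge 0$), together with $\gamma = \mu^s\Delta t/2 > 0$, every term of $a^s({\bf u}^{sh},{\bf u}^{sh})$ is nonnegative. Consequently the solid contribution only helps coercivity and can simply be discarded, reducing the problem to the same Poincaré/symmetric-gradient estimate already used for the continuous case.

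Concretely, I would first expand $a^h({\bf u}^h,{\bf u}^h)$ using the splitting \cref{ah()}, writing
\begin{equation*}
a^h({\bf u}^h,{\bf u}^h)
= \alpha\|{\bf u}^h\|_{0,\Omega}^2
+ \tfrac{\nu^f}{2}\|{\rm D}{\bf u}^h\|_{0,\Omega}^2
+ \beta\|{\bf u}^{sh}\|_{0,\Omega_{n+1}^s}^2
+ \tfrac{\nu^\delta}{2}\|{\rm D}{\bf u}^{sh}\|_{0,\Omega_{n+1}^s}^2
+ \gamma\|\nabla_{\bf X}{\bf u}^{sh}\|_{0,\Omega_{\bf X}^s}^2 .
\end{equation*}
Under the stated assumptions the last three terms are each $\ge 0$ (note in particular that the $L^2$-projected field ${\bf u}^{sh}=P_{n+1}({\bf u}^h)$ appears only through its own squared norms, so no subtle sign issues arise). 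Dropping them gives $a^h({\bf u}^h,{\bf u}^h) \ge a^f({\bf u}^h,{\bf u}^h)$.

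Next, I would repeat the estimate from \cref{coercivity_of_a} verbatim on $a^f$: use $\|{\rm D}{\bf u}^h\|_{0,\Omega}^2 \ge 2\|\nabla {\bf u}^h\|_{0,\Omega}^2$ together with the Poincaré inequality $\|{\bf u}^h\|_{0,\Omega}\le C_\Omega \|\nabla {\bf u}^h\|_{0,\Omega}$ (valid because $\mathbb{V}^h \subset H_0^1(\Omega)^d$), to obtain
\begin{equation*}
a^f({\bf u}^h,{\bf u}^h) \ge \tfrac{\nu^f}{2}\|{\rm D}{\bf u}^h\|_{0,\Omega}^2
\ge \tfrac{\nu^f}{C_\Omega^2+1}\|{\bf u}^h\|_{1,\Omega}^2,
\end{equation*}
so that \cref{coercivity_of_ah_eq} follows with $c_a^h = \nu^f/(C_\Omega^2+1)$, the same constant as in the continuous case.

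Since every step is already established in the excerpt, there is no real obstacle; the only thing worth being careful about is simply confirming that each of the three solid terms is nonnegative (which uses $\beta\ge 0$, $\nu^\delta\ge 0$, and $\gamma>0$ from the simplifying assumptions), and that the Poincaré inequality and symmetric-gradient bound carry over to the conforming subspace $\mathbb{V}^h\subset \mathbb{V}$. Importantly, the constant $c_a^h$ obtained is independent of $h$, which is what one needs for stability and error estimates in the space-discrete setting that follows.
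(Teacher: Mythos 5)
Your proposal is correct and follows essentially the same route as the paper, whose proof simply states that it repeats the argument of \cref{coercivity_of_a} with $a({\bf u},{\bf u})$ replaced by $a^h({\bf u}^h,{\bf u}^h)$: the solid terms, now evaluated at ${\bf u}^{sh}=P_{n+1}({\bf u}^h)$, are nonnegative under the assumptions $\beta\ge 0$, $\nu^\delta\ge 0$, $\gamma>0$ and are discarded, leaving the Poincar\'e and symmetric-gradient bound on $a^f$. Your write-up is in fact more explicit than the paper's one-line proof, and your observation that the constant $c_a^h=\nu^f/(C_\Omega^2+1)$ is independent of $h$ is a worthwhile point.
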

\begin{proof}
This proof follows the same procedure as the proof for \cref{coercivity_of_a} by changing $a({\bf u}, {\bf u})$ to $a^h({\bf u}^h, {\bf u}^h)$.
\end{proof}

\begin{lemma}\label{continuity_of_ah}
	$a^h({\bf u}^h, {\bf v}^h)$ is bounded, i.e., there exists a positive real number $C_a^h$, such that for $\forall {\bf u}^h, {\bf v}^h\in {\mathbb{V}^h}$,
	\begin{equation}\label{continuity_of_ah_eq}
	\left|a^h({\bf u}^h, {\bf v}^h)\right|
	\le
	C_a^h\left\|{\bf u}^h\right\|_\mathbb{V}\left\|{\bf v}^h\right\|_\mathbb{V}.
	\end{equation}
\end{lemma}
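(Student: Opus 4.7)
The plan is to exploit the decomposition $a^h({\bf u}^h,{\bf v}^h) = a^f({\bf u}^h,{\bf v}^h) + a^s({\bf u}^{sh},{\bf v}^{sh})$ from \cref{ah()}, handle the two pieces separately, and then combine the bounds with the triangle inequality. The solid piece has already been done: \cref{continuity_of_as} gives $|a^s({\bf u}^{sh},{\bf v}^{sh})| \le C_s^h \|{\bf u}^h\|_{\mathbb{V}} \|{\bf v}^h\|_{\mathbb{V}}$, so no new work is needed there.

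For the fluid piece, I would apply Cauchy--Schwarz to each of the two terms in the definition \cref{af()} to obtain
\begin{equation*}
|a^f({\bf u}^h,{\bf v}^h)| \le \alpha \|{\bf u}^h\|_{0,\Omega} \|{\bf v}^h\|_{0,\Omega} + \tfrac{\nu^f}{2} \|{\rm D}{\bf u}^h\|_{0,\Omega} \|{\rm D}{\bf v}^h\|_{0,\Omega}.
\end{equation*}
Then I would use $\|{\bf w}\|_{0,\Omega} \le \|{\bf w}\|_{1,\Omega} = \|{\bf w}\|_{\mathbb{V}}$ together with the elementary estimate $\|{\rm D}{\bf w}\|_{0,\Omega} \le 2\|\nabla {\bf w}\|_{0,\Omega} \le 2\|{\bf w}\|_{\mathbb{V}}$ (both already used in the proof of \cref{continuity_of_a}) to conclude $|a^f({\bf u}^h,{\bf v}^h)| \le (\alpha + 2\nu^f)\|{\bf u}^h\|_{\mathbb{V}} \|{\bf v}^h\|_{\mathbb{V}}$.

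Adding the two bounds gives the claim with $C_a^h := \alpha + 2\nu^f + C_s^h$. This is essentially a transcription of \cref{continuity_of_a}, with the only new ingredient being that the solid contributions are measured through the $L^2$-projection $P_{n+1}$ rather than through the trace of ${\bf u}^h$ itself; that extra complication is precisely what \cref{continuity_of_as} (which rests on the three interpolation-based lemmas preceding it) was designed to absorb.

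I do not anticipate any real obstacle: the proof is routine once one recognises that the only non-standard feature — the presence of the projection $P_{n+1}$ inside $a^s$ — has already been handled. The author even signals this by noting, just before the statement, that the main idea was to bound ${\bf u}^{sh}$, $\nabla_{\bf X} {\bf u}^{sh}$ and ${\rm D}{\bf u}^{sh}$ in $\mathbb{V}$ via interpolation error estimates, which is exactly what yields \cref{continuity_of_as}.
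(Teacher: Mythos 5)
Your proposal is correct and follows essentially the same route as the paper: the paper's proof likewise bounds $a^f$ by Cauchy--Schwarz and then invokes the decomposition \cref{ah()} together with \cref{continuity_of_as_eq} for the solid part. You simply make the constants explicit, which the paper leaves implicit.
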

\begin{proof}
By the Cauchy-Schwarz inequality, $a^f({\bf u}^h, {\bf v}^h)$ is a bounded bilinear functional. Using the definition \cref{ah()}, $a^h({\bf u}^h, {\bf v}^h)$ is bounded due to \cref{continuity_of_as_eq}.
\end{proof}

\begin{lemma}\label{continuity_of_lh}
	$\ell^h({\bf v}^h)$ is bounded, i.e., there exists a positive real number $C_\ell^h$, such that for $\forall {\bf v}^h\in{\mathbb{V}^h}$,
	\begin{equation}\label{continuity_of_lh_eq}
	\left|\ell^h({\bf v}^h)\right|\le C_\ell^h\left\|{\bf v}^h\right\|_\mathbb{V}.
	\end{equation}
\end{lemma}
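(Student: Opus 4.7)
The plan is to decompose $\ell^h$ according to its definition \cref{lh()} as $\ell^h({\bf v}^h)=\ell^f({\bf v}^h)+\ell^s({\bf v}^{sh})$ and bound each piece separately, relying on the fact that the hard step (controlling the solid part $\ell^s({\bf v}^{sh})$ in terms of the global $\mathbb{V}$-norm of ${\bf v}^h$) has already been handled in \cref{continuity_of_ls}.

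For the fluid part $\ell^f({\bf v}^h)$, I would apply the Cauchy--Schwarz inequality in $L^2(\Omega)$ exactly as in the continuous proof of \cref{continuity_of_f}, together with the bound $\|{\bf v}^h\|_{0,\Omega}\le\|{\bf v}^h\|_{1,\Omega}=\|{\bf v}^h\|_{\mathbb{V}}$, to obtain
\begin{equation*}
\left|\ell^f({\bf v}^h)\right|
\le \frac{2\rho^f}{\Delta t}\left\|{\bf u}_n\right\|_{0,\Omega}\left\|{\bf v}^h\right\|_{\mathbb{V}}.
\end{equation*}
This factor is independent of the discretization and uses only the assumed regularity of the previous iterate ${\bf u}_n$.

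For the solid part $\ell^s({\bf v}^{sh})$, I would invoke \cref{continuity_of_ls} directly, which yields $|\ell^s({\bf v}^{sh})|\le C_\ell^h\|{\bf v}^h\|_{\mathbb{V}}$. This is where the $L^2$-projection $P_{n+1}$ and the interpolation estimate \cref{interpolation_erro_estimate} enter implicitly, since that corollary is itself a consequence of the three bounds \cref{estimate_of_ush} and its companions for $\nabla_{\bf X}{\bf u}^{sh}$ and ${\rm D}{\bf u}^{sh}$, combined with the Lipschitz-invertibility assumption used to bound $\|{\bf F}_n\|_{0,\Omega_{\bf X}^s}$.

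Adding the two estimates and using the triangle inequality gives \cref{continuity_of_lh_eq}, with the constant $C_\ell^h$ (possibly renaming from the one appearing in \cref{continuity_of_ls}) taken as the sum of the two constants above. There is no real obstacle here: the bound is essentially a cosmetic combination of the already-established continuous estimate for $\ell^f$ and the discrete estimate for $\ell^s$ from \cref{continuity_of_ls}, and the proof mirrors the step from \cref{continuity_of_ah} in which $a^f$ is bounded by Cauchy--Schwarz and the solid contribution is handled by \cref{continuity_of_as}.
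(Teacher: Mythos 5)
Your proposal is correct and follows essentially the same route as the paper's own proof: decompose $\ell^h$ via \cref{lh()}, bound $\ell^f({\bf v}^h)$ by the Cauchy--Schwarz inequality, and handle $\ell^s({\bf v}^{sh})$ by invoking \cref{continuity_of_ls_eq}. The additional detail you give on the fluid part and the explicit triangle-inequality step simply spells out what the paper leaves implicit.
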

\begin{proof}
By the Cauchy-Schwarz inequality, $\ell^f({\bf v}^h)$ is a bounded linear functional. According to the definition \cref{lh()}, $\ell^h({\bf v}^h)$ is bounded due to \cref{continuity_of_ls_eq}.
\end{proof} 

We choose the $P_2/P_1$ or $P_2/(P_1+P_0)$ element which satisfies the following discrete inf-sup condition \cite{Boffi_2011}\cite [Section 12.6] {brenner2007mathematical}:

\begin{proposition}[Discrete Inf-Sup Condition] There exist $c_b^h>0$ such that
	\begin{equation}
	\inf_{q^h\in\mathbb{P}^h}\sup_{{\bf v}^h\in\mathbb{V}^h}\frac{b({\bf v}^h,q^h)}{\|{\bf v}^h\|_{\mathbb{V}}\|q^h\|_{\mathbb{P}}}
	\ge c_b^h.
	\end{equation}
\end{proposition}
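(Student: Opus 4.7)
The plan is to deduce the discrete inf-sup condition from the continuous one (already established in the Inf-Sup Condition proposition) by means of Fortin's criterion. That is, I would seek a linear operator $\Pi_h : H_0^1(\Omega)^d \to \mathbb{V}^h$ satisfying (i) the divergence-compatibility property $b(\mathbf{v} - \Pi_h \mathbf{v}, q^h) = 0$ for all $q^h \in \mathbb{P}^h$, and (ii) the stability bound $\|\Pi_h \mathbf{v}\|_{\mathbb{V}} \le C_F \|\mathbf{v}\|_{\mathbb{V}}$ with $C_F$ independent of the mesh size $h$. Once such a $\Pi_h$ is in hand, for any fixed $q^h \in \mathbb{P}^h$ one chooses the continuous supremizer $\mathbf{v} \in \mathbb{V}$ for which $b(\mathbf{v}, q^h) \ge c_b \|\mathbf{v}\|_{\mathbb{V}} \|q^h\|_{\mathbb{P}}$, and then $\mathbf{v}^h := \Pi_h \mathbf{v}$ gives $b(\mathbf{v}^h, q^h) = b(\mathbf{v}, q^h) \ge (c_b/C_F) \|\mathbf{v}^h\|_{\mathbb{V}} \|q^h\|_{\mathbb{P}}$, so that one may take $c_b^h := c_b/C_F$.

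The concrete construction of $\Pi_h$ is the classical Taylor--Hood construction. First, I would take a Clement (or Scott--Zhang) quasi-interpolant $\Pi_h^{SZ}$ onto $\mathbb{V}^h$ to obtain $H^1$-stability and homogeneous boundary data. Second, I would add an edge-bubble correction: for each interior edge (in 2D) or face (in 3D) $e$, let $b_e \in V^h(\Omega^h)$ be the $P_2$ midside basis function and $\mathbf{n}_e$ its associated normal; I would set
\begin{equation*}
\Pi_h \mathbf{v} = \Pi_h^{SZ} \mathbf{v} + \sum_{e} \alpha_e\, b_e\, \mathbf{n}_e,
\end{equation*}
choosing the coefficients $\alpha_e$ so that $\int_e (\mathbf{v} - \Pi_h \mathbf{v}) \cdot \mathbf{n}_e = 0$ on every edge/face. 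A Green's-identity computation shows that this condition, together with element-wise matching of the mean value, yields $\int_\Omega q^h \nabla \cdot (\mathbf{v} - \Pi_h \mathbf{v}) = 0$ for every $q^h \in \mathbb{P}^h$, which is precisely property (i). For the $P_2/(P_1 + P_0)$ pair, one additionally imposes the elementwise mean-value constraint $\int_K \nabla \cdot (\mathbf{v} - \Pi_h \mathbf{v}) = 0$ using interior face-bubble corrections, which is possible provided the mesh satisfies the usual macro-element or shape-regularity hypothesis.

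The main obstacle will be verifying that the bubble correction is uniformly $H^1$-bounded, i.e.\ that property (ii) survives the correction step. This requires local scaling arguments on a reference element to show $|\alpha_e|\,\|b_e \mathbf{n}_e\|_{1,K} \lesssim \|\mathbf{v}\|_{1,\omega_K}$ on each element patch $\omega_K$, combined with a trace inequality to estimate $\alpha_e$ in terms of $\|\mathbf{v} - \Pi_h^{SZ} \mathbf{v}\|$ on neighbouring edges, and a finite-overlap argument to sum these contributions globally. The estimate hinges on shape-regularity of the fluid mesh of $\Omega^h$ but, importantly, is completely independent of the solid mesh on $\Omega_{n+1}^{sh}$ and of the interpolation operator $P_{n+1}$, since $b(\cdot,\cdot)$ is defined only on $\Omega$. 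For this reason, as the paper remarks, the discrete inf-sup reduces to the standard Taylor--Hood result in \cite{Boffi_2011} and \cite[Section 12.6]{brenner2007mathematical}, and no new argument is needed beyond citing those sources.
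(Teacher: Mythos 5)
The paper offers no proof of this proposition at all: it simply asserts that the $P_2/P_1$ and $P_2/(P_1+P_0)$ pairs satisfy the discrete inf-sup condition and cites \cite{Boffi_2011} and \cite[Section 12.6]{brenner2007mathematical}. Your closing remark --- that nothing beyond those citations is needed because $b(\cdot,\cdot)$ lives entirely on the fixed fluid mesh $\Omega^h$ and is untouched by the solid mesh and the projection $P_{n+1}$ --- is exactly the paper's implicit reasoning, and that observation is the only FSI-specific content required here; it is correct.

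Your Fortin sketch, however, does not go through as written for this particular element pair, and the division of labour between the two compatibility conditions is reversed. The edge/face-bubble correction enforcing $\int_e(\mathbf{v}-\Pi_h\mathbf{v})\cdot\mathbf{n}_e=0$ is the right condition for the \emph{discontinuous} pressure modes: elementwise integration by parts gives $\int_K q_0\,\nabla\cdot(\mathbf{v}-\Pi_h\mathbf{v})=q_0\int_{\partial K}(\mathbf{v}-\Pi_h\mathbf{v})\cdot\mathbf{n}$, so it is the $P_0$ enrichment that the edge fluxes handle; moreover $\int_K\nabla\cdot(\mathbf{v}-\Pi_h\mathbf{v})=0$ is \emph{equivalent} to the vanishing of those boundary fluxes, so there is nothing left for your proposed ``interior face-bubble corrections'' to do --- and no such bubbles exist in $P_2$ on simplices anyway. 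For the continuous $P_1$ component one needs, after a global integration by parts (the boundary term vanishing since both fields are in $H_0^1$), the vector-valued mean condition $\int_K(\mathbf{v}-\Pi_h\mathbf{v})\,d\mathbf{x}=0$ on each element, because $\nabla q^h$ is piecewise constant. It is precisely this condition that a local bubble correction cannot deliver for $P_2/P_1$ (the edge bubbles are shared between neighbouring elements), which is why the standard proofs --- including the one in the cited section of Brenner--Scott --- do not construct a Fortin operator but instead use Verf\"urth's trick (a weakened inf-sup bound in the mesh-weighted seminorm $\bigl(\sum_K h_K^2\|\nabla q^h\|_{0,K}^2\bigr)^{1/2}$, combined with the continuous inf-sup condition) or Stenberg's macroelement technique. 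None of this affects the truth of the proposition or the validity of reducing it to the cited literature; it only means that, if you wanted a self-contained proof rather than the citation the paper settles for, the operator you describe would have to be replaced by one of those arguments.
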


Using the above three lemmas (\cref{coercivity_of_ah} - \ref{continuity_of_lh}) and the discrete inf-sup condition, we further have the following optimal error estimate result \cite [Corollary 12.5.18] {brenner2007mathematical}.

\begin{theorem}
	Let $({\bf u},p)$ and $({\bf u}^h,p^h)$ be the solution pairs of \cref{problem_bilinear_form} and \cref{discretized_problem_bilinear_form} respectively, then there is a constant $c$ depending on $C_a^h$, $c_a^h$ and $c_b^h$ such that
	\begin{equation}
	\left\|{\bf u}-{\bf u}^h\right\|_{\mathbb{V}}
	+\left\|p-p^h\right\|_{\mathbb{P}}
	\le
	c\left(\inf_{{\bf v}\in\mathbb{V}^h}\left\|{\bf u}-{\bf v}\right\|_{\mathbb{V}}+
	\inf_{q\in\mathbb{P}^h}\left\|p-q\right\|_{\mathbb{P}}\right).
	\end{equation}
\end{theorem}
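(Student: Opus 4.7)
The plan is to apply the abstract Babu\v{s}ka--Brezzi machinery for saddle-point problems, as codified in \cite[Corollary 12.5.18]{brenner2007mathematical}. The hypotheses of that abstract result are: (i) boundedness and coercivity (on the discrete kernel of $b$) of the bilinear form on the discrete velocity space; (ii) boundedness of the load functional; and (iii) the discrete inf-sup condition for $b$. The preceding Lemmas (coercivity of $a^h$, continuity of $a^h$, continuity of $\ell^h$) together with the stated discrete inf-sup condition supply exactly these ingredients, so the proof reduces to verifying that the abstract framework applies in the present setting.

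First I would invoke the discrete version of the Babu\v{s}ka--Brezzi theorem to ensure that \cref{discretized_problem_bilinear_form} is well-posed: the coercivity constant $c_a^h$ of $a^h$ on all of $\mathbb{V}^h$ (hence a fortiori on $\ker b \cap \mathbb{V}^h$), the continuity constant $C_a^h$, and the inf-sup constant $c_b^h$ yield a unique solution pair $({\bf u}^h, p^h)$ together with the a priori stability bound $\|{\bf u}^h\|_{\mathbb{V}} + \|p^h\|_{\mathbb{P}} \le C \|\ell^h\|$. Next, for any candidates ${\bf v}^h \in \mathbb{V}^h$ and $q^h \in \mathbb{P}^h$, I would split the error by the triangle inequality into an interpolation piece ($\|{\bf u} - {\bf v}^h\|_{\mathbb{V}}$ and $\|p - q^h\|_{\mathbb{P}}$) and a discrete piece ($\|{\bf v}^h - {\bf u}^h\|_{\mathbb{V}}$ and $\|q^h - p^h\|_{\mathbb{P}}$). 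The discrete piece is controlled by using coercivity of $a^h$ on the kernel, boundedness of $a^h$, and the discrete inf-sup condition, which bound the residual on the right-hand side in terms of $\|{\bf u} - {\bf v}^h\|_{\mathbb{V}} + \|p - q^h\|_{\mathbb{P}}$. Taking the infimum over ${\bf v}^h \in \mathbb{V}^h$ and $q^h \in \mathbb{P}^h$ yields the stated estimate, with a constant $c$ depending only on $C_a^h$, $c_a^h$, and $c_b^h$.

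The main obstacle is that, strictly speaking, $a^h$ and $a$ do not coincide on $\mathbb{V}^h \times \mathbb{V}^h$: the solid contribution in $a^h$ is evaluated through the $L^2$-projection $P_{n+1}$ onto $V^{sh}$, so there is a genuine variational crime and one would expect a consistency term of the form $\sup_{{\bf w}^h \in \mathbb{V}^h} |a({\bf u},{\bf w}^h) - a^h({\bf u},{\bf w}^h)|/\|{\bf w}^h\|_{\mathbb{V}}$ (plus an analogous term from $\ell$ vs.\ $\ell^h$) to appear on the right-hand side via a Strang-type lemma. To absorb this into the stated quasi-optimal bound one would have to argue that the projection error \cref{interpolation_erro_estimate} is itself dominated by the best-approximation error of ${\bf u}$ in $\mathbb{V}^h$, so that consistency does not spoil the final rate. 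This is the step I would scrutinize most carefully; once it is handled, the conclusion follows immediately from the cited abstract corollary.
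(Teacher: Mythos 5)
Your proposal takes essentially the same route as the paper: the paper's entire proof is the sentence preceding the theorem, which invokes \cite[Corollary 12.5.18]{brenner2007mathematical} on the strength of \cref{coercivity_of_ah}, \cref{continuity_of_ah}, \cref{continuity_of_lh} and the discrete inf-sup condition --- precisely the ingredients you assemble. The consistency issue you flag at the end is real, but it is a gap in the paper's argument rather than one you have introduced: since $a^h({\bf u}^h,{\bf v}^h)=a^f({\bf u}^h,{\bf v}^h)+a^s(P_{n+1}{\bf u}^h,P_{n+1}{\bf v}^h)$ differs from $a({\bf u}^h,{\bf v}^h)$ on $\mathbb{V}^h\times\mathbb{V}^h$ (and likewise $\ell^h$ from $\ell$), the cited corollary for conforming Galerkin discretizations does not apply verbatim, and a first-Strang-lemma consistency term of the type you describe should appear on the right-hand side; the paper neither introduces such a term nor shows it is dominated by the best-approximation errors. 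Absorbing it as you suggest would require the constant $C_h$ in \cref{interpolation_erro_estimate} to decay at the approximation rate, whereas the paper uses it only as a bounded constant. In short: same approach as the paper, and your closing caveat correctly identifies the one step the paper leaves unjustified.
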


%%%%%%%%%%%%%%%%%%%%%%%%%%%%%%%%%%%%%%%%%%%
\section{Implementation}
\label{sec:implementation}

We shall use an incompressible neo-Hookean model as described in section \ref{subsec:neo-hookean}. The treatment of convection is presented in section \ref{subsec:convection}, and the preconditioned iterative solver is introduced in section \ref{subse:solver}.
\subsection{Neo-Hookean hyperelastic solid}
\label{subsec:neo-hookean}
For an incompressible neo-Hookean hyperelastic material, the energy function is described as:
\begin{equation}\label{expression_of_psi}
\Psi\left({\bf F}\right)=\frac{\mu^s}{2}\left(tr_{{\bf F}^{\rm T}{\bf F}}-d\right).
\end{equation}
Notice that $tr_{{\bf F}^{\rm T}{\bf F}}=tr_{{F}_{ki}{F}_{kj}}={F}_{ki}^2$,
therefore $\frac{\partial tr_{{\bf F}^{\rm T}{\bf F}}}{\partial{\bf F}}
=\frac{\partial}{\partial {F}_{mn}}\left({F}_{ki}^2\right)=2{\bf F}$ and ${\bf P}=\mu^s{\bf F}$, hence according to (\ref{expression_of_taus}),
\begin{equation}\label{expression_of_taus_B}
{\bm\tau}^s=J^{-1}\mu^s{\bf B},
\end{equation}
where ${\bf B}={\bf F}{\bf F}^{\rm T}$ is the left Cauchy-Green deformation tensor.

\begin{remark} It is convenient to choose the reference configuration $({\bf B}={\bf I})$ the same as the initial configuration if the solid is initially stress free. In this case, $J\equiv 1$, and if both the fluid and solid are initially stationary, it can be seen from (\ref{constitutive_solid}) and (\ref{expression_of_taus_B}) that the solid is subjected to a hydrostatic stress $\mu^s-p^s$, while the fluid is subject to a hydrostatic stress $-p^f$ due to (\ref{constitutive_fluid}). From the boundary condition (\ref{interfaceBC2}), we can observe that $\mu^s-p^s=-p^f$ or  $p^s-p^f=\mu^s$. In order to avoid this unnecessary jump of pressure across the interface, we shall replace (\ref{expression_of_taus_B}) as:
\begin{equation}\label{expression_of_tao_add_I}
{\bm\tau}^s=J^{-1}\mu^s\left({\bf B}-{\bf I}\right).
\end{equation}
This actually does not influence the solid constitutive equation (\ref{constitutive_solid}), because $J^{-1}\mu^s{\bf I}$ can be integrated into the pressure term. However, this is important for a numerical scheme based on unfitted meshes as used in this article, where difficult to accurately capture the jump of pressure across the fluid-solid interface. This also does not change analysis of the energy-conservation and well-posedness for the proposed scheme.
\end{remark}

Expressing ${\bf F}$ by velocity as shown in \cref{compute_solid_at_reference}, then the weak form \cref{weak_form1_discretization} becomes:
\begin{equation}\label{weak_form1_discretization_velocity}
\begin{split}
&\rho^f\int_{\Omega}\frac{{\bf u}_{n+1}-{\bf u}_n}{\Delta t} \cdot{\bf v}d{\bf x}
+\rho^f \int_{\Omega}\left({\bf u}_*\cdot\nabla\right){\bf u}_*\cdot{\bf v}d{\bf x}
+\frac{\nu^f}{2}\int_{\Omega}{\rm D}{\bf u}_*:{\rm D}{\bf v}d{\bf x} \\
&-\int_{\Omega}p_{n+1}\nabla \cdot {\bf v}d{\bf x}
+\rho^{\delta}\int_{\Omega_{n+1}^s}\frac{{\bf u}_{n+1}-{\bf u}_n}{\Delta t} \cdot{\bf v}d{\bf x}
+\frac{\nu^{\delta}}{2}\int_{\Omega_{n+1}^s}{\rm D}{\bf u}_*:{\rm D}{\bf v}d{\bf x} \\
&-\mu^s\int_{\Omega_{n+1}^s}J_{n+1}^{-1}\nabla\cdot{\bf v}d{\bf x}
+\frac{\mu^s\Delta t}{2}\int_{\Omega_{\bf X}^s}\nabla_{\bf X}{\bf u}_*:\nabla_{\bf X}{\bf v}d{\bf X} \\
&=-\mu^s\int_{\Omega_{\bf X}^s}{\bf F}_n:\nabla_{\bf X}{\bf v}d{\bf X}+O\left(\Delta t^2\right).
\end{split}
\end{equation}

\begin{remark}\label{reduced_order2}
Like noted in \cref{reduced_order1}, term $\frac{\mu^s\Delta t}{2}\int_{\Omega_{\bf X}^s}\nabla_{\bf X}{\bf u}_*:\nabla_{\bf X}{\bf v}d{\bf X}$ in above equation will introduce an error $\Delta tO(h^m)$ ($m>0$) after discretization in space, which then reduce the order to $O(\Delta t)$ for a fixed mesh.	
\end{remark}

\subsection{Treatment of convection}
\label{subsec:convection}
Without considering the non-linear convection term, equation \cref{weak_form1_discretization_velocity} is almost a linear equation except the moving domain $\Omega_{n+1}^s$. So we have to iteratively construct $\Omega_{n+1}^s$ and take derivative on it. For the low Reynolds number ($Re<50$) that we consider in this article, the convection term can also be arranged on the right-hand side of the equation and included in one iteration loop, i.e., a fixed point iteration is adopted to solve the nonlinear system at each time step. For other methods to treat convection, readers may refer to \cite{pironneau1989finite, Zienkiewic2014}. Notice that no artificial diffusion terms are added in this case, and we can measure the system energy exactly using the energy functions defined from \cref{Kinetic_energy} to \cref{Potential_energy_of_solid}.

\begin{remark}
$J_{n+1}^{-1}=1/det_{{\bf F}_{n+1}}$ makes the term $\mu^s\int_{\Omega_{n+1}^s}J_{n+1}^{-1}\nabla\cdot{\bf v}d{\bf x}$ non-linear in equation \cref{weak_form1_discretization_velocity}. This term is also iteratively solved together with the convection term in one loop.
\end{remark}

\subsection{Iterative linear algebra solver}
\label{subse:solver}

\cref{discretized_problem_bilinear_form} leads to the following linear equation system \cite{Wang_2017}:
\begin{equation}\label{lastlinerequation}
\begin{bmatrix}
{\bf A} & {\bf B} \\
{\bf B}^{\rm T} &{\bf 0}
\end{bmatrix}
\begin{pmatrix}
{\bf u} \\
{\bf p}
\end{pmatrix}
=
\begin{pmatrix}
{\bf b} \\
{\bf 0}
\end{pmatrix}
,
\end{equation}
where
\begin{equation}\label{matirx_A}
{\bf A}={\bf M}/\Delta t+{\bf K}+{\bf D}^{\rm T}\left({\bf M}^s/\Delta t+{\bf K}^s\right){\bf D},
\end{equation}
and
\begin{equation}\label{forcevector_b}
{\bf b}={\bf f}+{\bf D}^{\rm T}{\bf f}^s+{\bf M}{\bf u}^*/\Delta t+{\bf D}^{\rm T}{\bf M}^s{\bf D}{\bf u}^n/\Delta t.
\end{equation}

In the above, matrix ${\bf D}$ is the isoparametric interpolation matrix derived from equation (\ref{ush}) which can be expressed as
\begin{equation*}
	{\bf D}=
	\begin{bmatrix}
		{\bf P}^{\rm T} & {\bf 0} \\
		{\bf 0} & {\bf P}^{\rm T} \\
	\end{bmatrix}
	,
	{\bf P}_{ij}=\varphi_i({\bf x}_j^s)
	.
\end{equation*}
All the other matrices and vectors arise from standard FEM discretization: ${\bf M}$ and ${\bf M}^s$ are mass matrices from discretization of the first term of $a^f({\bf u}^h,{\bf v}^h)$ and the first term of $a^s({\bf u}^{sh},{\bf v}^{sh})$ respectively, and similarly the stiffness matrices ${\bf K}$ and ${\bf K}^s$ are from the last term of $a^f({\bf u}^h,{\bf v}^h)$ and the last two terms of $a^s({\bf u}^{sh},{\bf v}^{sh})$ respectively. ${\bf B}$ is from discretization of the linear functional $b\left({\bf v}^h,p^h\right)$. The force vectors ${\bf f}$ and ${\bf f}^s$ come from discretization of $\ell^f({\bf u}^h)$ and $\ell^s({\bf u}^{sh})$ respectively.

We use the block matrix
$
\begin{bmatrix}
	{{\bf M/}\Delta t + {\bf K}} & {\bf 0} \\
	{\bf 0}^{\rm T} &{{\bf M/}\Delta t}
\end{bmatrix}
$
as a preconditioner, computed by an incomplete Cholesky decomposition, and MinRes algorithm \cite{Elman_2014} to solve equation \cref{lastlinerequation}. A stable convergence performance can be observed although this is not the topic of this article.

%%%%%%%%%%%%%%%%%%%%%%%%%%%%%%%%%%%%%%%%%%
\section{Numerical experiments}
\label{sec:numerical_exs}
In this section, we focus on the validation of energy conservation of the proposed numerical method in two and three dimensions. For more two-dimensional numerical examples and validation see \cite{Wang_2017}. The $P_2/(P_1+P_0)$ elements will be used, i.e., the standard $Taylor$-$Hood$ elements $P_2P_1$ is enriched by a constant $P_0$ for approximation of the pressure. This element has the property of local mass conservation and the constant $P_0$ may better capture the element-based jump of pressure \cite{Arnold_2002, Boffi_2011}. We shall demonstrate the improvement of mass conservation and energy conservation by using the $P_2/(P_1+P_0)$ elements compared to the $P_2P_1$ elements. We shall also compare the Crank-Nicolson scheme and the backward Euler scheme (see \cref{appendix:backward-Euler}) in this section.

\subsection{Oscillating disc driven by an initial kinetic energy}
\label{subsec:oddbaike}
In this test, we consider an enclosed flow (${\bf n}\cdot{\bf u}=0$) in $\Omega=[0,1]\times[0,1]$ with a periodic boundary condition. A solid disc is initially located in the middle of the square $\Omega$ and has a radius of $0.2$. The initial velocity of the fluid and solid are prescribed by the following stream function
\begin{equation*}
\Psi=\Psi_0{\rm sin}(ax){\rm sin}(by),
\end{equation*}
where $\Psi_0=5.0\times10^{-2}$ and $a=b=2\pi$. Two parameter sets (\cref{Parameter sets for test problem of the oscillating disc}) are used to test the energy and mass conservation based on four different uniform meshes on $\Omega$: (1) $50\times 50$ ($h=0.02$), (2) $66\times 66$ ($h=0.015$), (3) $100\times 100$ ($h=0.01$) and (4) $133\times 133$ ($h=0.0075$). The solid mesh is constructed to have a similar node density to the fluid mesh. In order to visualise the flow a snapshot of the velocity and deformation fields for the first set of parameters is presented in \cref{Snapshot_of_fluid}. For a comparison between the flows energy plots are presented in \cref{energy_evolution}, from which it can be seen that the solid corresponding to the second parameter set is harder and has a larger frequency than the first one. In addition, $E_d(\Omega_t^s)<0$ and both the viscosity and density jump across the interface.

\begin{table}[h!]
\centering
\begin{tabular}{|c|c|c|c|c|c|}
\hline
	Parameter sets  & $\rho^f$ & $\rho^s$ & $\nu^f$ & $\nu^s$ & $\mu^s$\\
	\hline
	Parameter 1 & $1$ & $1$ & $0.01$ & $0.01$ & $1$\\
	\hline
	Parameter 2 & $1$ & $10$ & $0.01$ & $0$ & $10$\\
	\hline
\end{tabular}
\captionsetup{justification=centering}
\caption{Parameter sets for test problem of the oscillating disc.}
\label{Parameter sets for test problem of the oscillating disc}
\end{table}

\begin{figure}[h!]
	\begin{minipage}[t]{0.5\linewidth}
		\centering  
		\includegraphics[width=2in,angle=0]{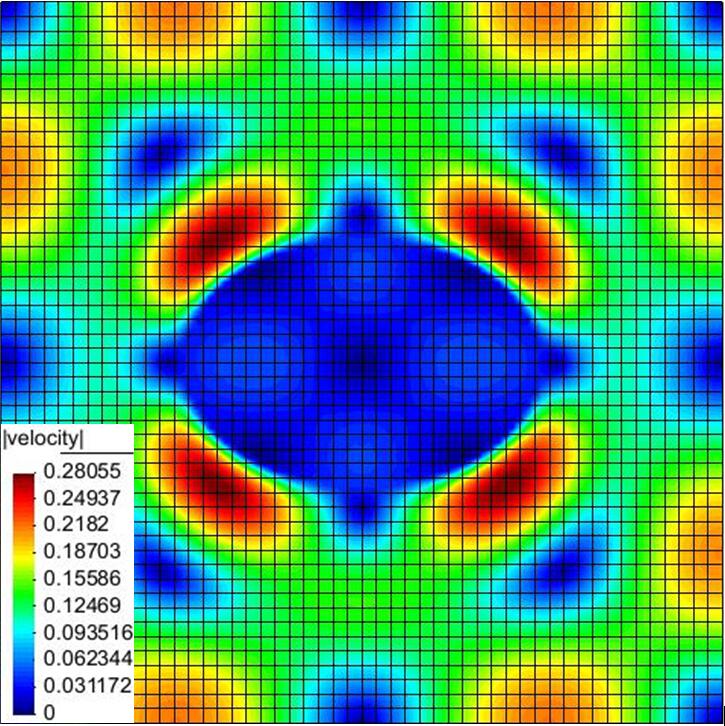}	
		\captionsetup{justification=centering}
		\caption*{\scriptsize(a) Velocity norm on the fluid mesh,}
	\end{minipage}
	\begin{minipage}[t]{0.5\linewidth}
		\centering  
		\includegraphics[width=2.5in,angle=0]{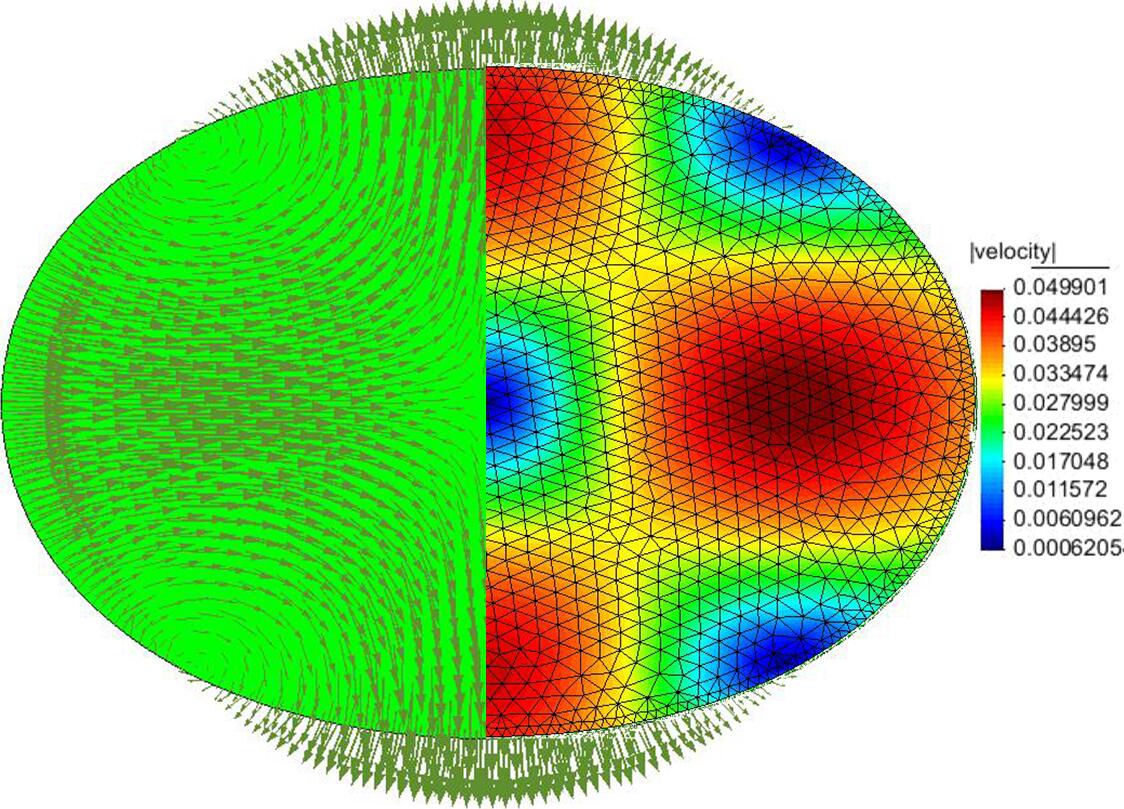}
		\caption*{\scriptsize(b) Distribution of velocity on the solid mesh.}
	\end{minipage}     		
	\captionsetup{justification=centering}
	\caption {\scriptsize Snapshot at $t=0.24$, Parameter 1, $\Delta t=10^{-2}$, on Mesh (1).} 
	\label{Snapshot_of_fluid}
\end{figure}

\begin{figure}[h!]
	\begin{minipage}[t]{0.5\linewidth}
		\centering  
		\includegraphics[width=2.5in,angle=0]{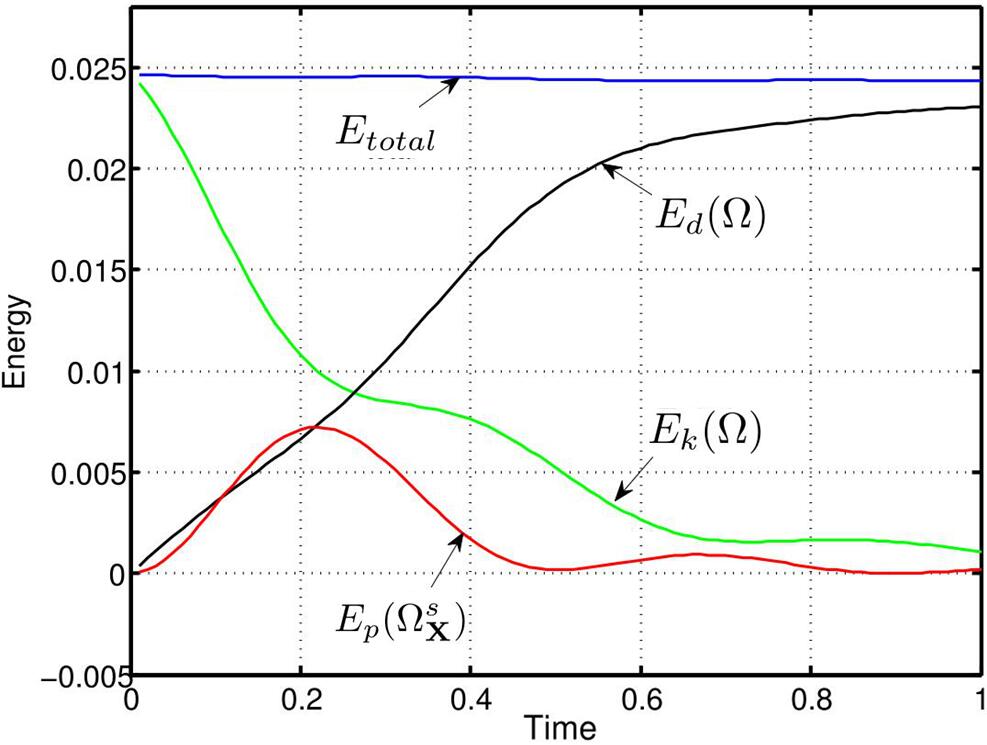}	
		\captionsetup{justification=centering}
		\caption*{\scriptsize(a) Parameter 1 ($E_k(\Omega_t^s)=E_d(\Omega_t^s)=0$),}
	\end{minipage}
	\begin{minipage}[t]{0.5\linewidth}
		\centering  
		\includegraphics[width=2.5in,angle=0]{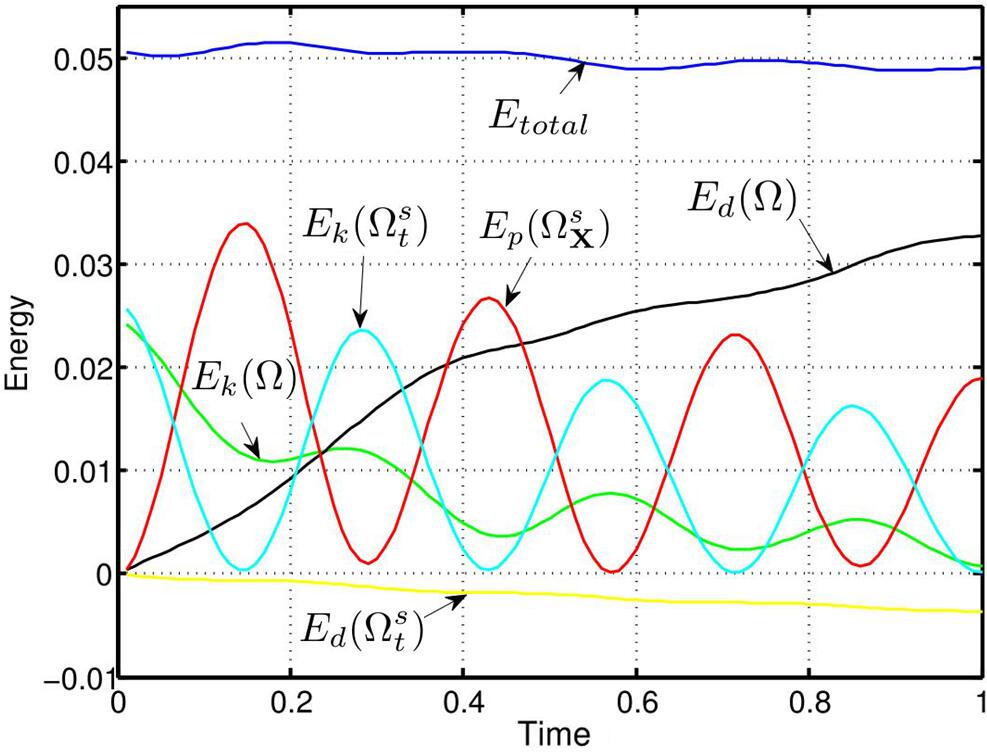}
		\caption*{\scriptsize(b) Parameter 2.}
	\end{minipage}     		
	\captionsetup{justification=centering}
	\caption {\scriptsize Evolution of energy, $\Delta t=10^{-2}$, on Mesh (1).} 
	\label{energy_evolution}
\end{figure}

We commence by comparing the Crank-Nicolson (CN) scheme and backward Euler (BE) scheme, and comparing $P_2/P_1$ elements and $P_2/(P_1+P_0)$ elements. The evolution of mass variation and energy error are demonstrated in \cref{mass_conservation} and \cref{energy_conservation} respectively, from which it is clearly apparent that the Crank-Nicolson scheme has an advantage for both the mass and energy conservation. It can be seen from \cref{mass_conservation} that the enrichment of the pressure field by a constant $P_0$ dramatically improves the mass conservation, although the effect for energy conservation appears to be very slightly negative from \cref{energy_conservation}.

\begin{figure}[h!]
	\begin{minipage}[t]{0.5\linewidth}
		\centering  
		\includegraphics[width=2.5in,angle=0]{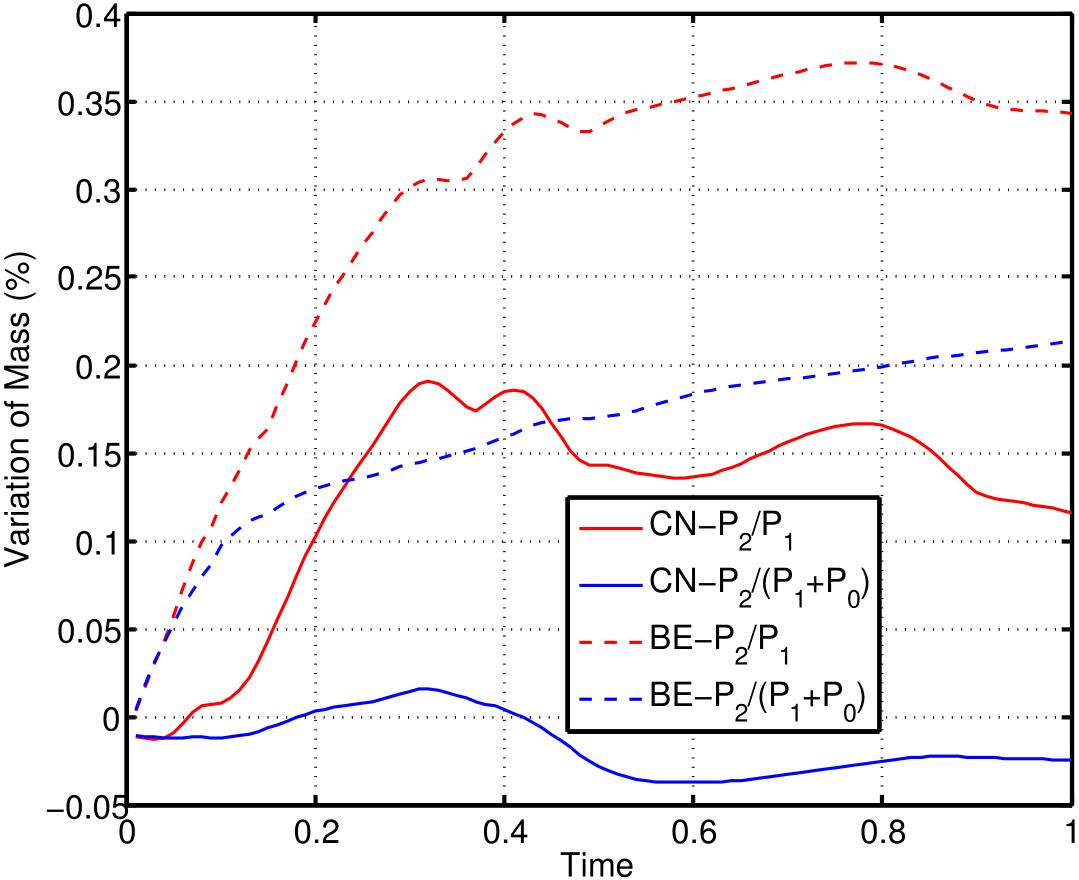}	
		\captionsetup{justification=centering}
		\caption*{\scriptsize(a) Parameter 1,}
	\end{minipage}
	\begin{minipage}[t]{0.5\linewidth}
		\centering  
		\includegraphics[width=2.5in,angle=0]{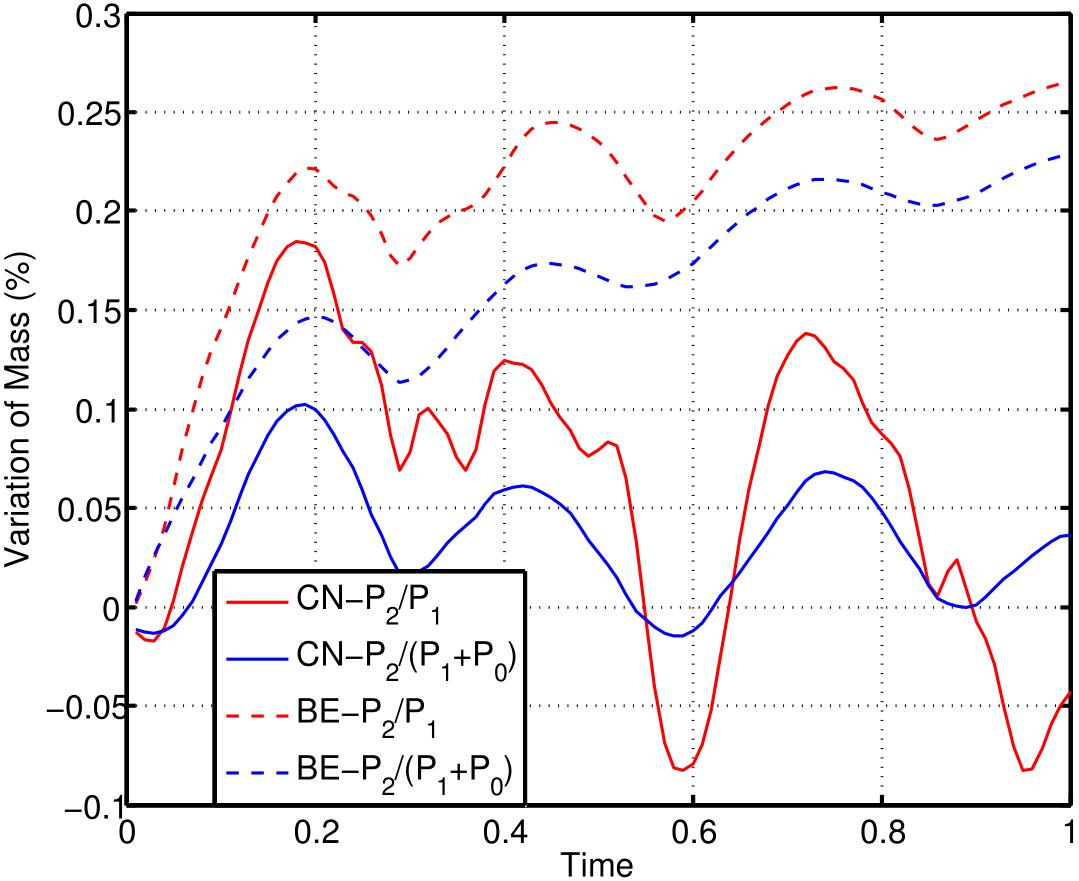}
		\caption*{\scriptsize(b) Parameter 2.}
	\end{minipage}     		
	\captionsetup{justification=centering}
	\caption {\scriptsize Variation of mass against time, $\Delta t=10^{-2}$, on Mesh (1).} 
	\label{mass_conservation}
\end{figure}

\begin{figure}[h!]
	\begin{minipage}[t]{0.5\linewidth}
		\centering  
		\includegraphics[width=2.5in,angle=0]{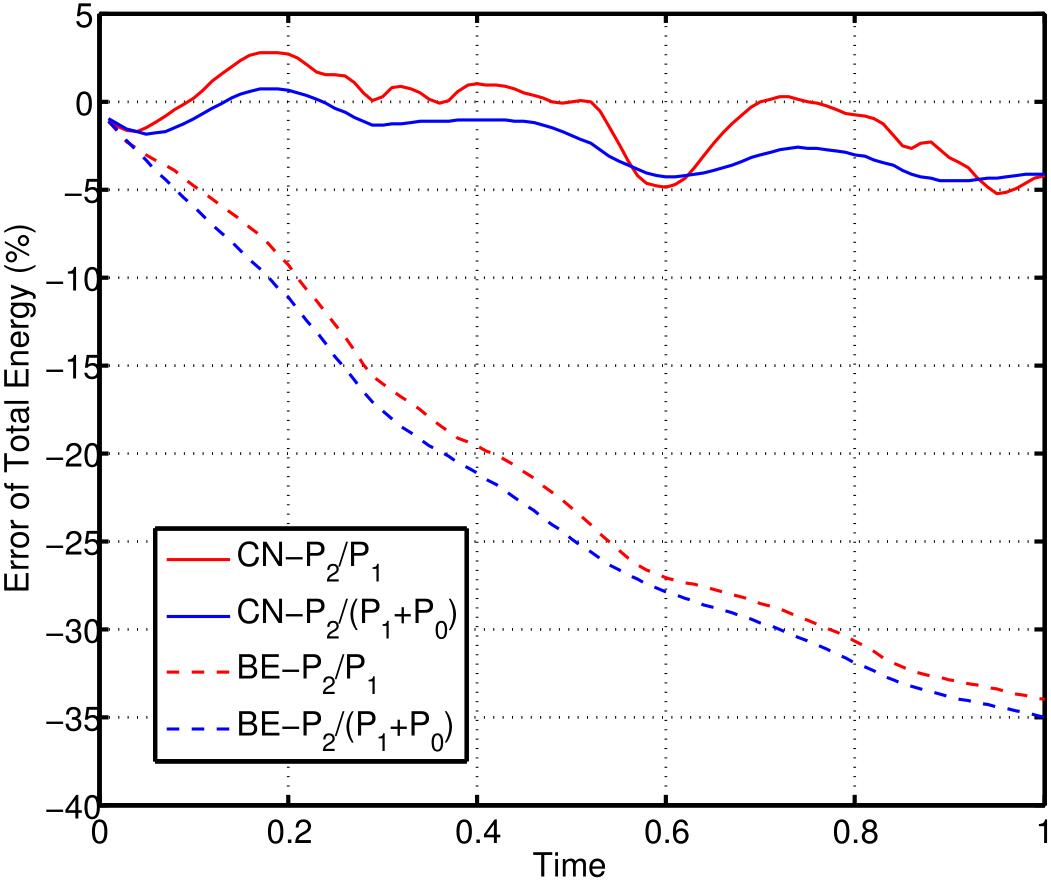}	
		\captionsetup{justification=centering}
		\caption*{\scriptsize(a) Parameter 1,}
	\end{minipage}
	\begin{minipage}[t]{0.5\linewidth}
		\centering  
		\includegraphics[width=2.4in,angle=0]{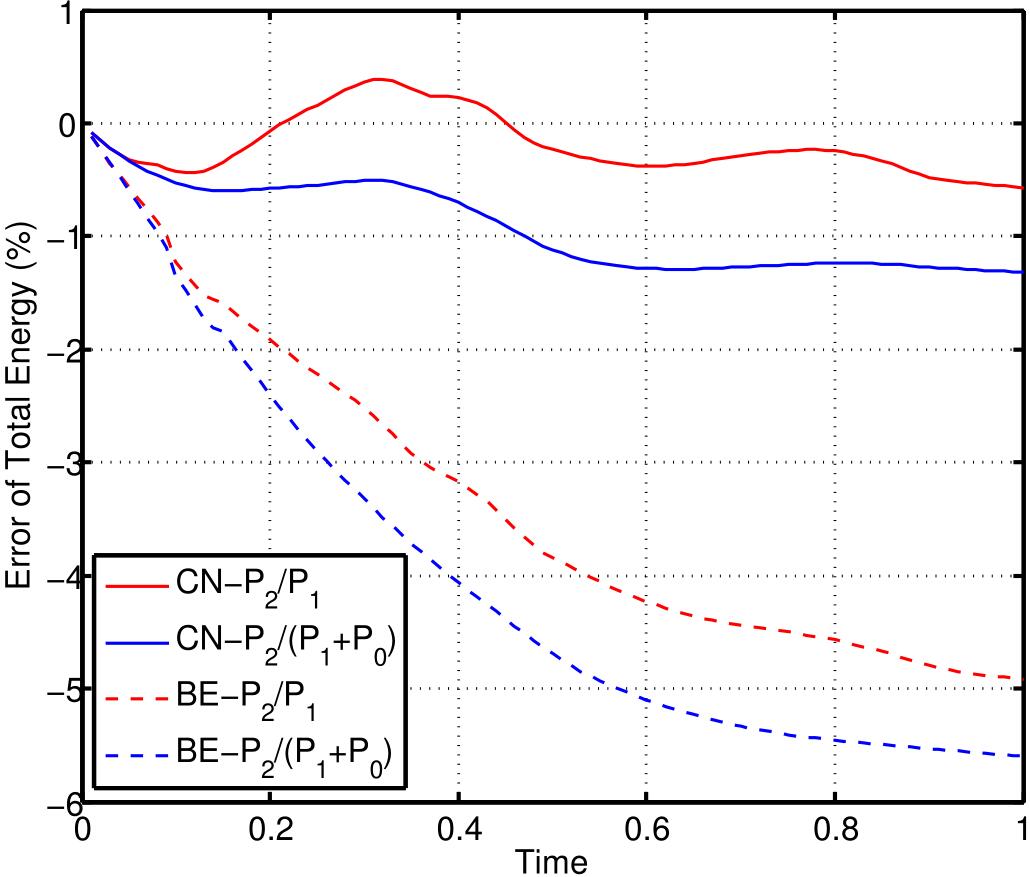}
		\caption*{\scriptsize(b) Parameter 2.}
	\end{minipage}     		
	\captionsetup{justification=centering}
	\caption {\scriptsize Error of total energy ($Err$ defined in \cref{energy estimate_after_time_discretization_closed}) against time, $\Delta t=10^{-2}$, on Mesh (1).} 
	\label{energy_conservation}
\end{figure}

We then use element $P_2/(P_1+P_0)$ and investigate time convergence of the proposed method. It can be observed, from \cref{time_conservation} (a) and \cref{order_of_time_conservation} (a), that both Crank-Nicolson and backward Euler scheme have a first order time convergence (see \cref{reduced_order1} and \cref{reduced_order2} for the reason that Crank-Nicolson scheme is also first order), however the former is more accurate than the latter using the same time step. It can also be observed, from \cref{time_conservation} and \cref{order_of_time_conservation}, that Crank-Nicolson scheme introduces more oscillation than backward Euler scheme in order to gain this accuracy, however, the oscillation can be reduced by mesh refinement (see \cref{mesh_conservation_mass} and \cref{mesh_conservation_energy}).

\begin{figure}[h!]
	\begin{minipage}[t]{0.5\linewidth}
		\centering  
		\includegraphics[width=2.4in,angle=0]{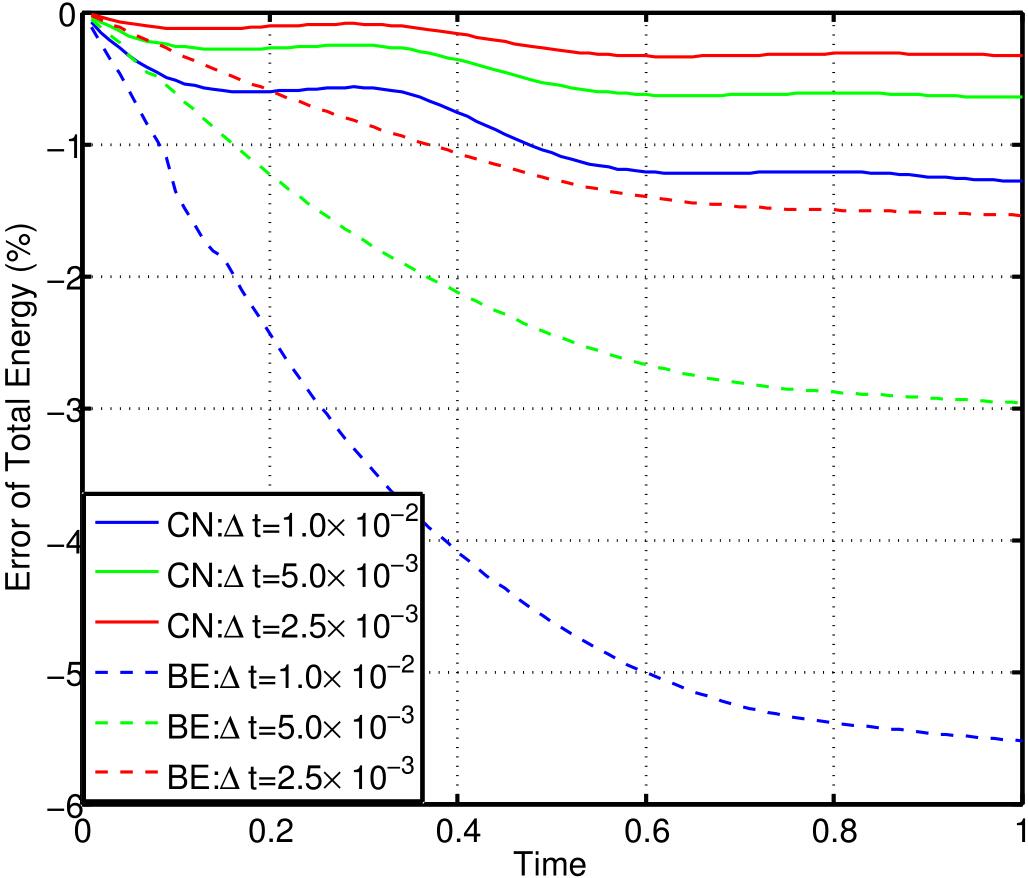}	
		\captionsetup{justification=centering}
		\caption*{\scriptsize(a) Parameter 1,}
	\end{minipage}
	\begin{minipage}[t]{0.5\linewidth}
		\centering  
		\includegraphics[width=2.5in,angle=0]{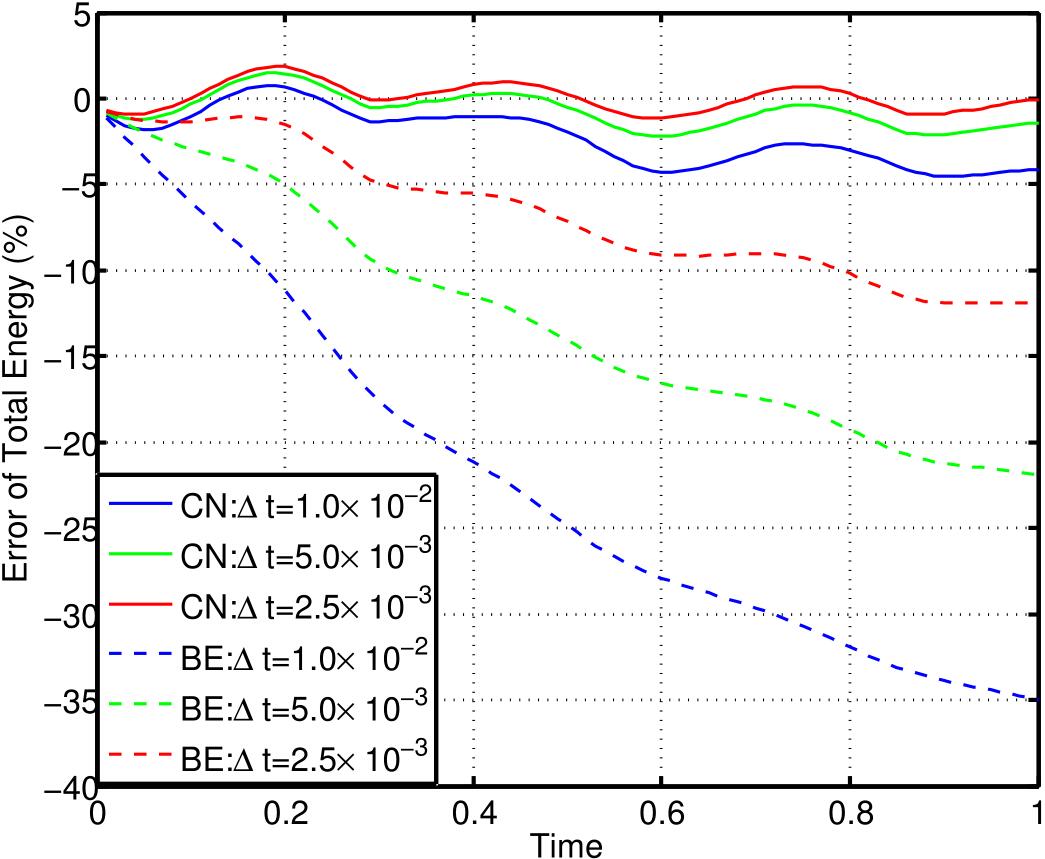}
		\caption*{\scriptsize(b) Parameter 2.}
	\end{minipage}     		
	\captionsetup{justification=centering}
	\caption {\scriptsize Error of total energy ($Err$ defined in \cref{energy estimate_after_time_discretization_closed}) against time, on Mesh (1).} 
	\label{time_conservation}
\end{figure}

\begin{figure}[h!]
	\begin{minipage}[t]{0.5\linewidth}
		\centering  
		\includegraphics[width=2.5in,angle=0]{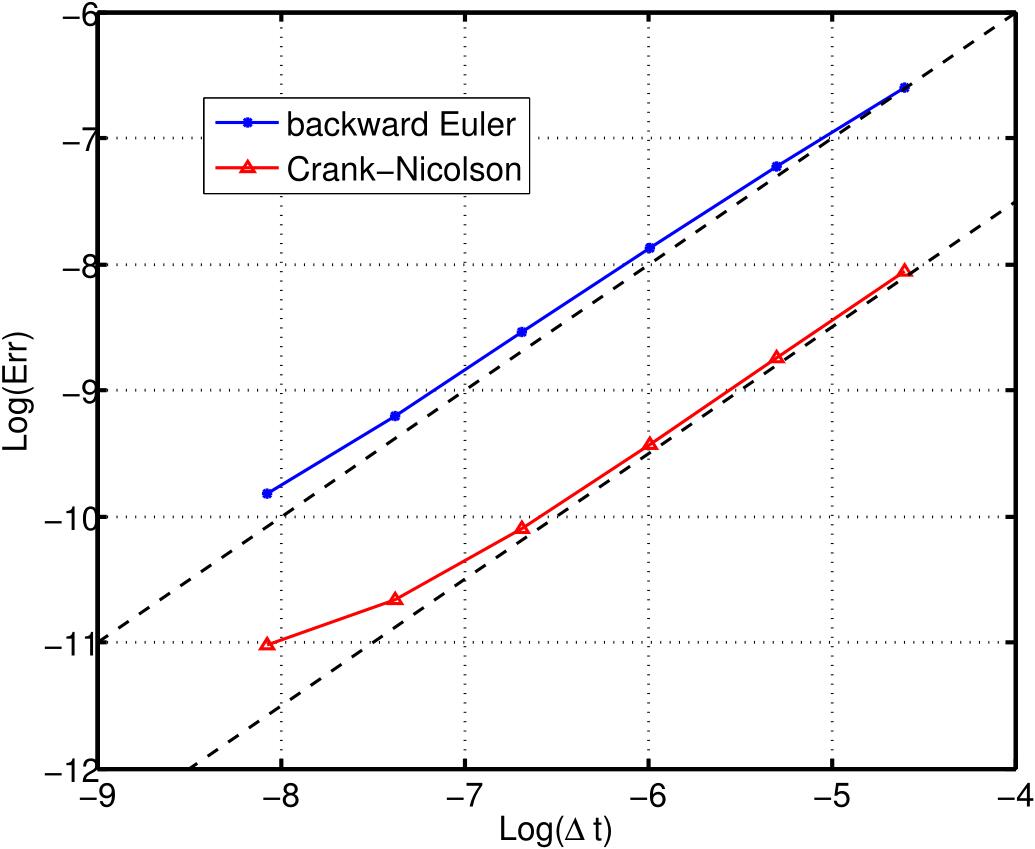}	
		\captionsetup{justification=centering}
		\caption*{\scriptsize(a) Parameter 1,}
	\end{minipage}
	\begin{minipage}[t]{0.5\linewidth}
		\centering  
		\includegraphics[width=2.4in,angle=0]{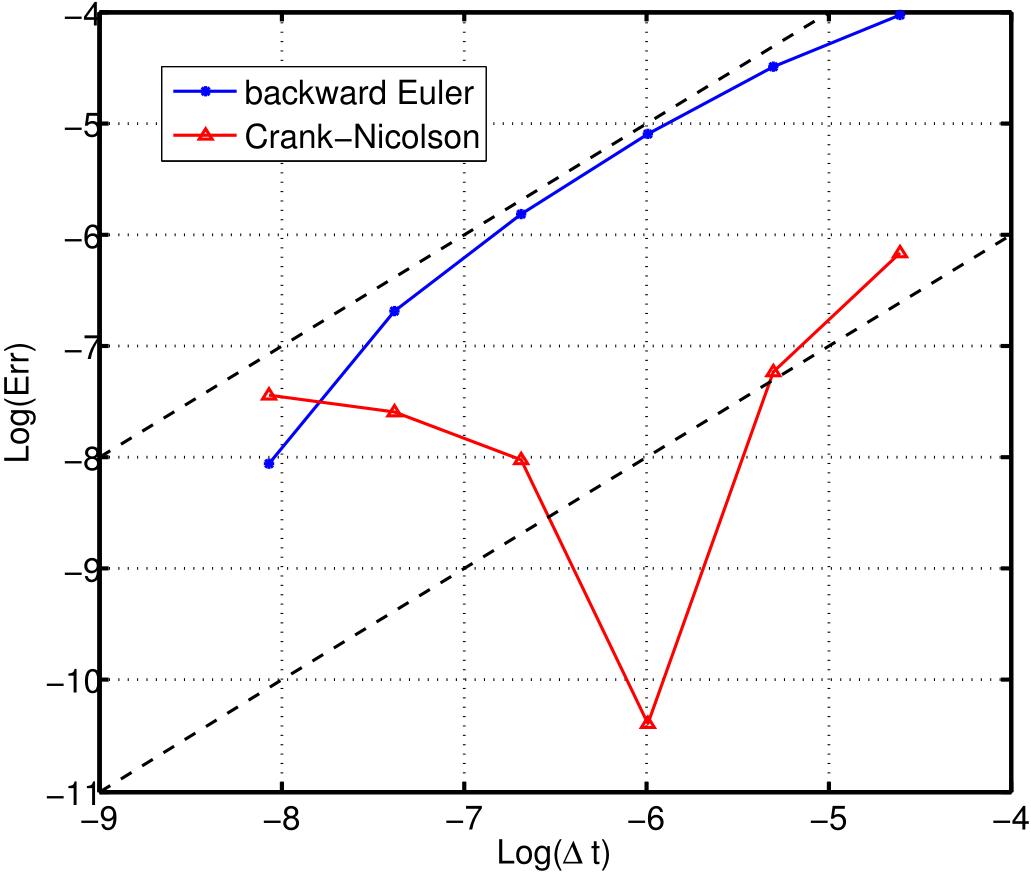}
		\caption*{\scriptsize(b) Parameter 2.}
	\end{minipage}     		
	\captionsetup{justification=centering}
	\caption {\scriptsize Convergence rate of $Err$ defined in \cref{energy estimate_after_time_discretization_closed} at $t=1$, on Mesh (1).} 
	\label{order_of_time_conservation}
\end{figure}

Finally, the mesh convergence is clearly demonstrated in \cref{mesh_conservation_mass} and \cref{mesh_conservation_energy} via mass and energy evolution respectively.

\begin{figure}[h!]
	\begin{minipage}[t]{0.5\linewidth}
		\centering  
		\includegraphics[width=2.5in,angle=0]{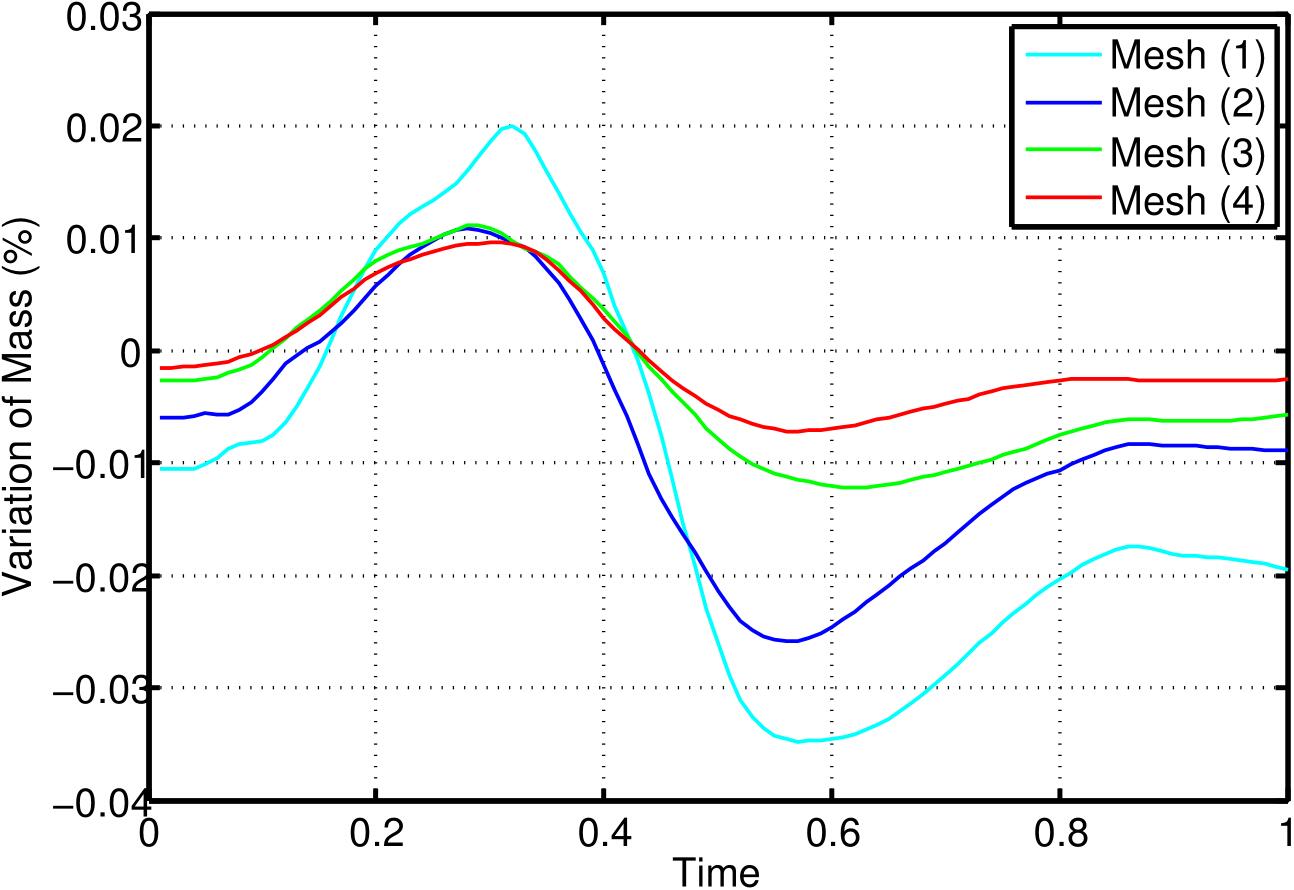}	
		\captionsetup{justification=centering}
		\caption*{\scriptsize(a) Parameter 1,}
	\end{minipage}
	\begin{minipage}[t]{0.5\linewidth}
		\centering  
		\includegraphics[width=2.5in,angle=0]{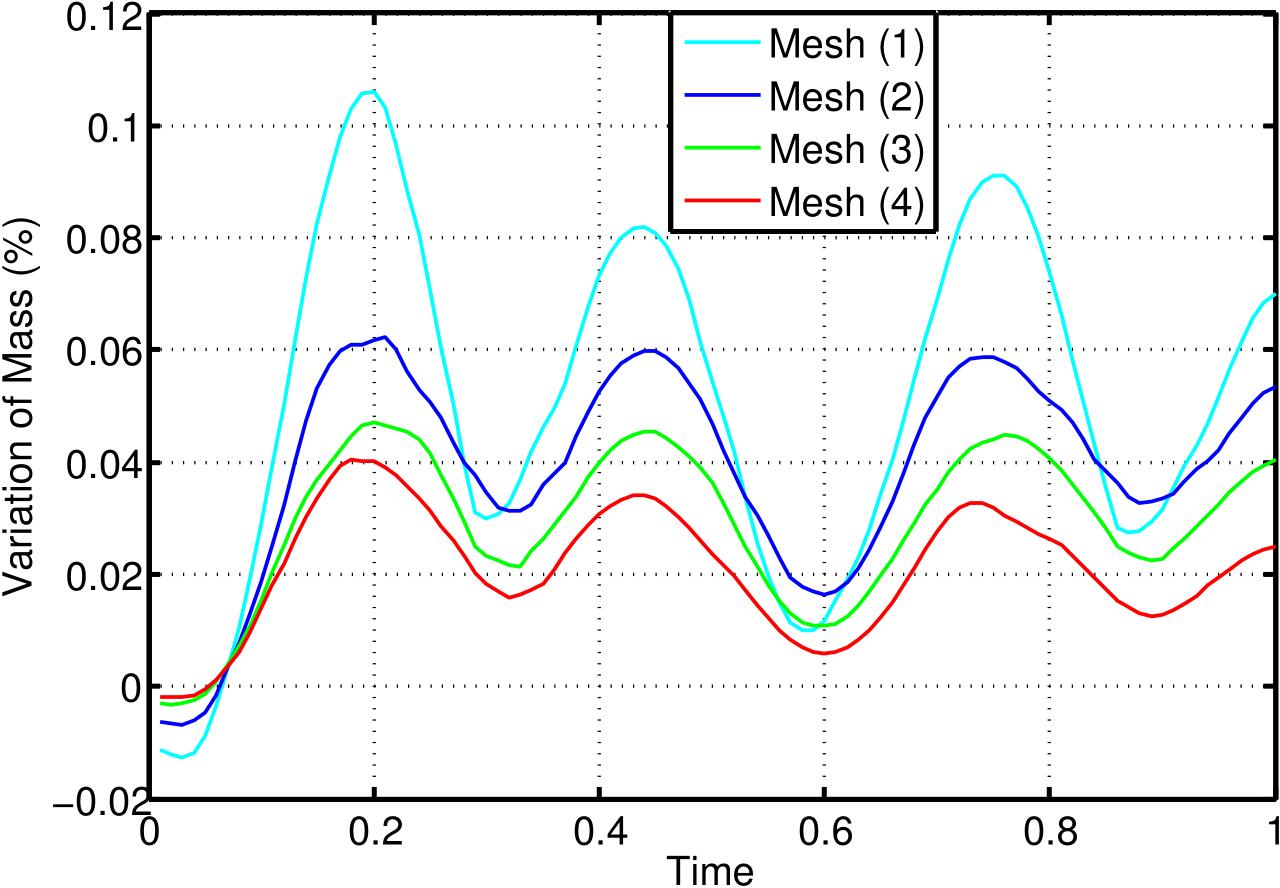}
		\caption*{\scriptsize(b) Parameter 2.}
	\end{minipage}     		
	\captionsetup{justification=centering}
	\caption {\scriptsize Variation of mass against time, $\Delta t=6.25\times 10^{-4}$.} 
	\label{mesh_conservation_mass}
\end{figure}

\begin{figure}[h!]
	\begin{minipage}[t]{0.5\linewidth}
		\centering  
		\includegraphics[width=2.5in,angle=0]{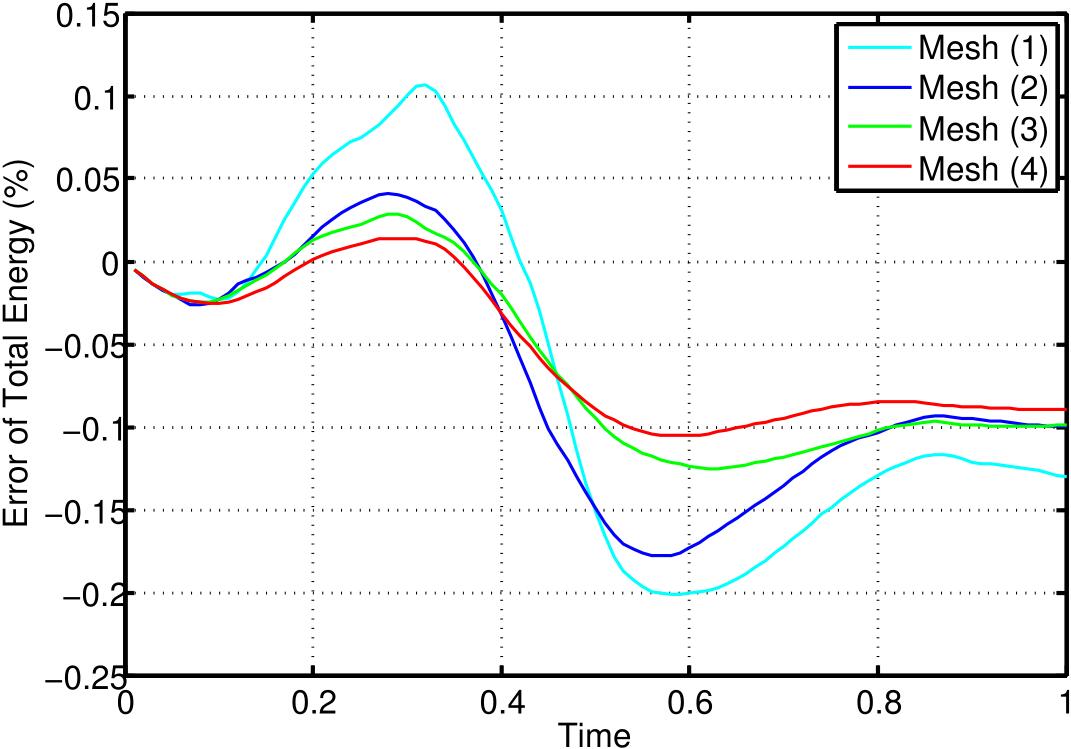}	
		\captionsetup{justification=centering}
		\caption*{\scriptsize(a) Parameter 1,}
	\end{minipage}
	\begin{minipage}[t]{0.5\linewidth}
		\centering  
		\includegraphics[width=2.5in,angle=0]{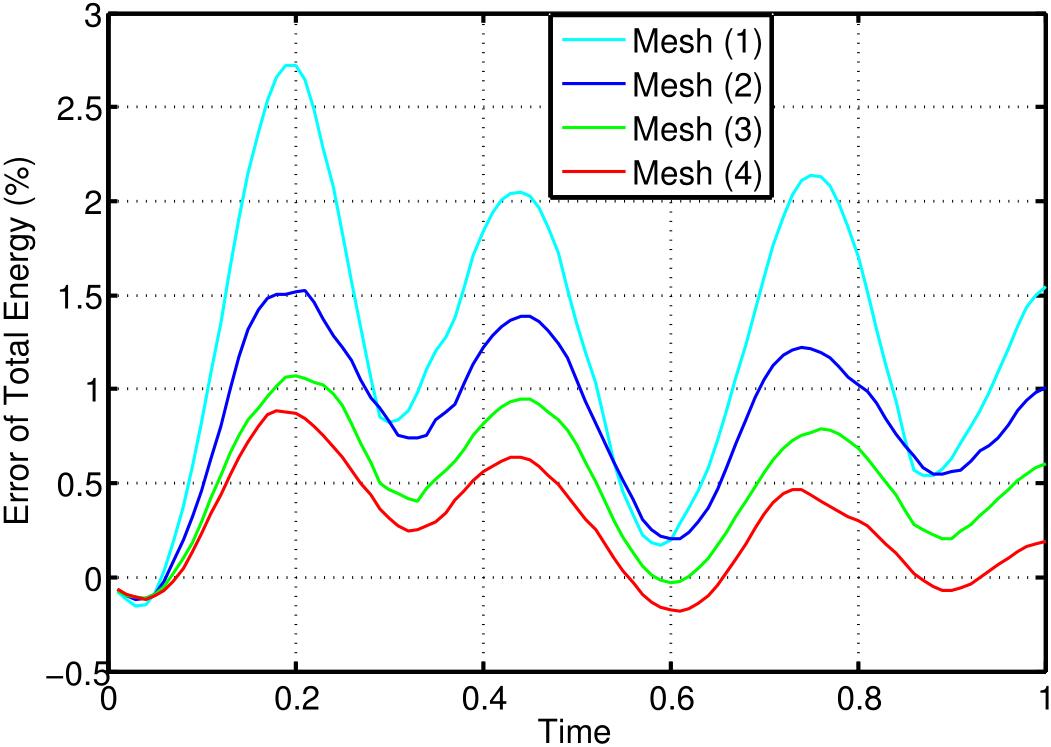}
		\caption*{\scriptsize(b) Parameter 2.}
	\end{minipage}     		
	\captionsetup{justification=centering}
	\caption {\scriptsize Error of total energy against time ($Err$ defined in \cref{energy estimate_after_time_discretization_closed}), $\Delta t=6.25\times 10^{-4}$.} 
	\label{mesh_conservation_energy}
\end{figure}

\subsection{Oscillating disc driven by an initial potential energy}
\label{subsec:odbype}
In the previous example, the disc oscillates because a kinetic energy (the 2nd and 3rd terms in \cref{energy estimate_after_time_discretization_closed}) is prescribed for the FSI system at the beginning. In this test, we shall stretch the disc and create a potential energy in the solid (the last term in \cref{energy estimate_after_time_discretization_closed}), then release it causing the disc to oscillate due to this potential solid energy. The computational domain is a square $\Omega=[0,1]\times[0,1]$. One quarter of a solid disc is located in the left-bottom corner of the square, and initially stretched as an ellipse as shown in \cref{figex2}. Notice the equation of an ellipse $\frac{x^2}{a^2}+\frac{y^2}{b^2}=1$ and its area $\pi ab$, hence we ensure that this stretch does not change mass of the solid.
\begin{figure}[h!]
	\centering
	\includegraphics[width=3in,angle=0]{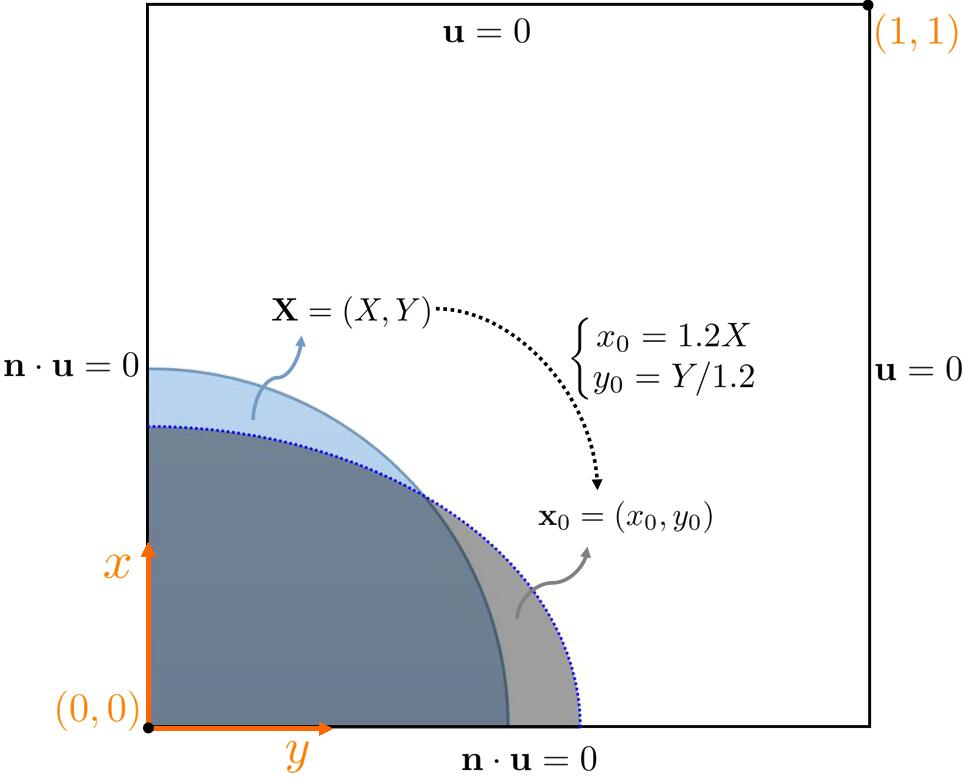}
	\caption {\scriptsize Computational domain and boundary conditions for example \ref{subsec:odbype}.} 
	\label{figex2}
\end{figure}

We choose $\rho^f=1$, $\rho^s=2$, $\mu^s=2$ and $\nu^f=\nu^s=0.01$. The fluid adopts the same meshes defined in section \ref{subsec:oddbaike}, and the solid has similar node density as the fluid. A snapshot of pressure on the fluid mesh and corresponding solid deformation with its velocity norm are displayed in \cref{Pressure and Velocity}. Time and mesh convergence are shown in \cref{time_energy_convergece} and \cref{mesh_energy_convergece} respectively.

\begin{figure}[h!] 
	\begin{minipage}[t]{0.5\linewidth}
	\centering  
	\includegraphics[width=2.5in,angle=0]{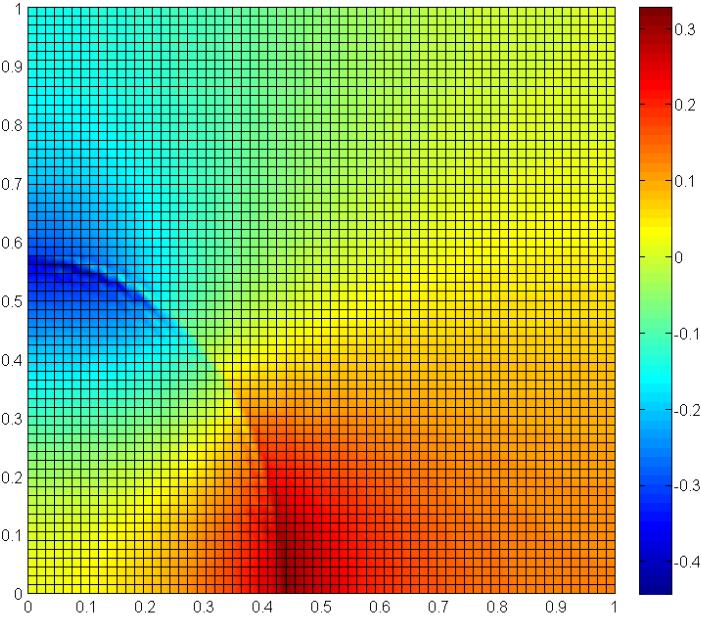}
	\caption*{\scriptsize(a) Distribution of pressures on the fluid mesh,}
	\end{minipage}
	\begin{minipage}[t]{0.5\linewidth}
	\centering  
	\includegraphics[width=1.5in,angle=0]{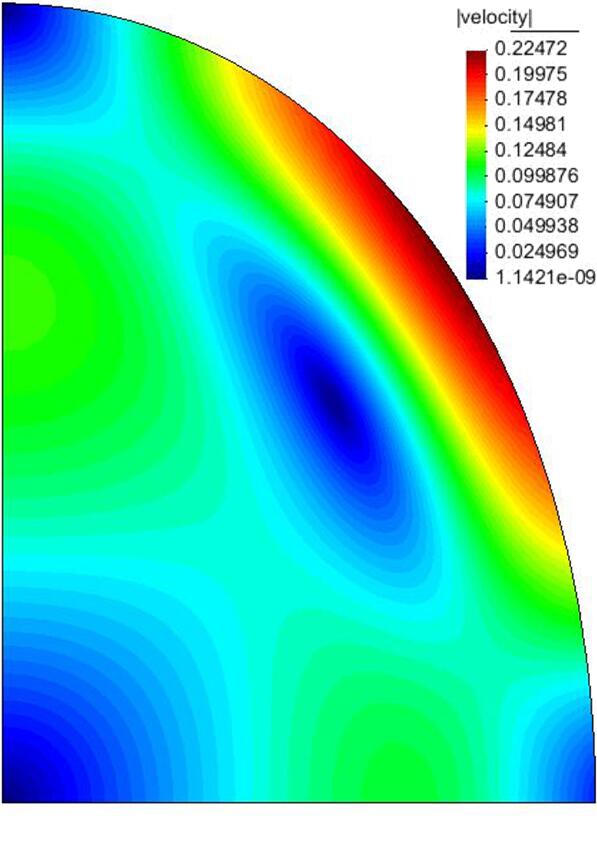}
	\caption*{\scriptsize(b) Velocity norm on the solid.}
	\end{minipage}   		
	\captionsetup{justification=centering}
	\caption {\scriptsize A snapshot at $t=0.9$, $\Delta t=10^{-2}$, on Mesh (2).} 
	\label{Pressure and Velocity}
\end{figure}

\begin{figure}[h!]
	\begin{minipage}[t]{0.5\linewidth}
	\centering  
		\includegraphics[width=2.5in,angle=0]{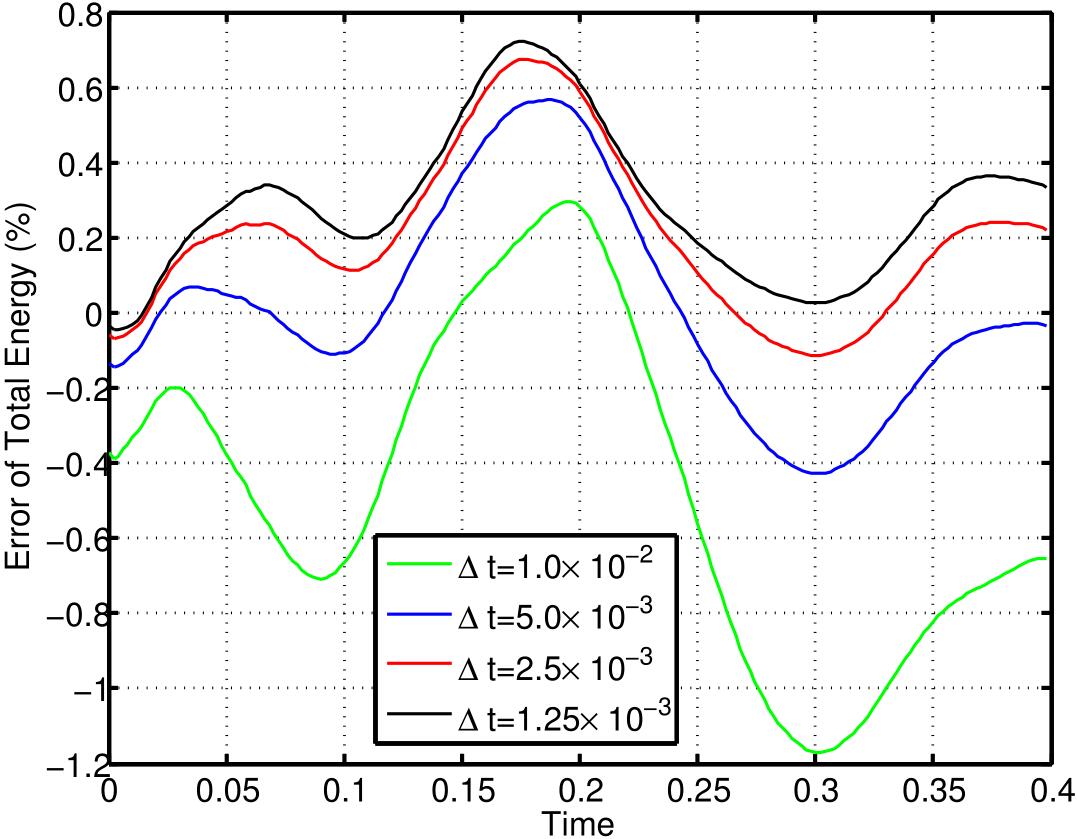}	
	\captionsetup{justification=centering}
	\caption{\scriptsize Time convergence, Mesh (2).}
	\label{time_energy_convergece}		
	\end{minipage}
	\begin{minipage}[t]{0.5\linewidth}
	\centering  
	\includegraphics[width=2.5in,angle=0]{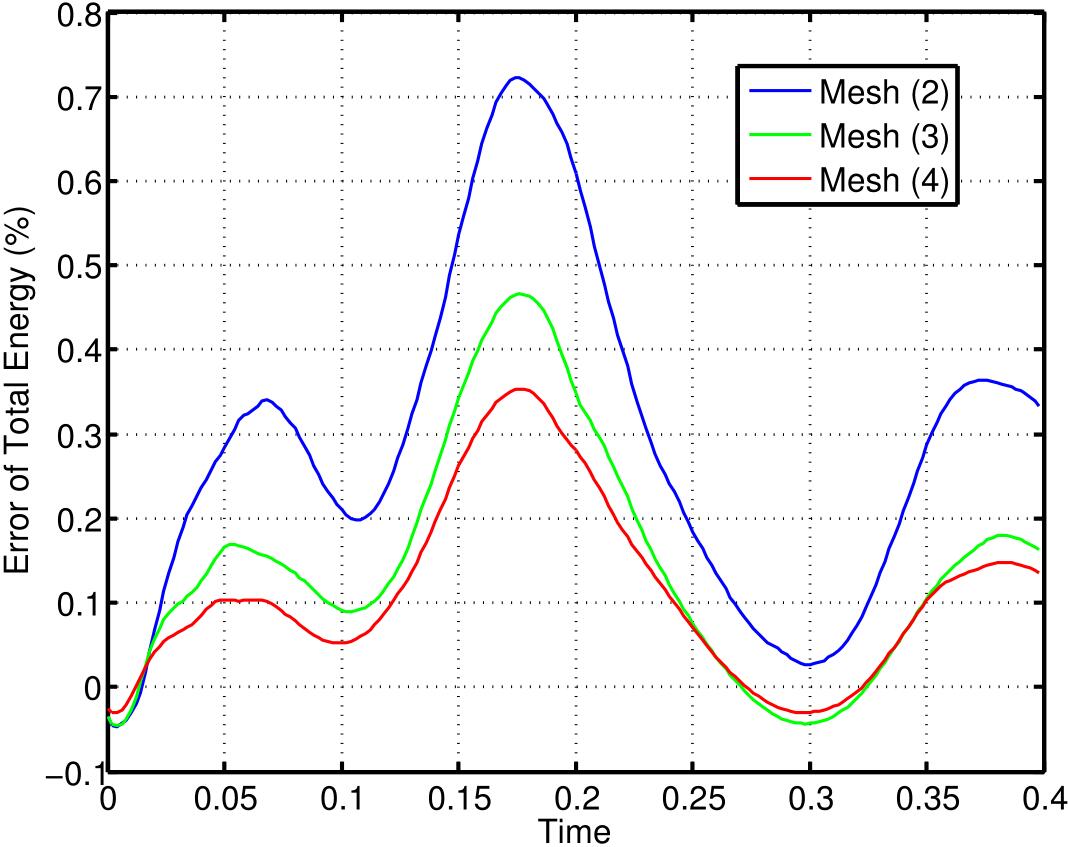}
	\caption{\scriptsize Mesh convergence, $\Delta t=1.25\times 10^{-3}$.}
	\label{mesh_energy_convergece}	
	\end{minipage}     		
\end{figure}

\subsection{Oscillating ball driven by an initial kinetic energy}
\label{subsec:ball}
In this section, we consider a 3D oscillating ball, which is an extension of the example in section \ref{subsec:oddbaike}. The ball is initially located at the center of $\Omega=[0,1]\times[0,1]\times[0,0.6]$ with a radius of $0.2$. Using the property of symmetry this computation is carried out on $1/8$ of domain $\Omega$: $[0,0.5]\times[0,0.5]\times[0,0.3]$. The initial velocities of $x$ and $y$ components are the same as that used in section \ref{subsec:oddbaike} and the $z$ component is set to be 0 at the beginning. We adopt the Parameter set 1 and Mesh (1) defined in section \ref{subsec:oddbaike} (with the same mesh size in z direction). A snapshot of the $1/8$ solid ball and the corresponding fluid velocity norm are presented in \cref{Velocity}, and the result of time convergence is presented in \cref{Time_convergence_ball} and \cref{Order_time_convergence_ball}, from which a first order convergence can be observed.

\begin{figure}[h!]
	\begin{minipage}[t]{0.5\linewidth}
		\centering  
		\includegraphics[width=2.5in,angle=0]{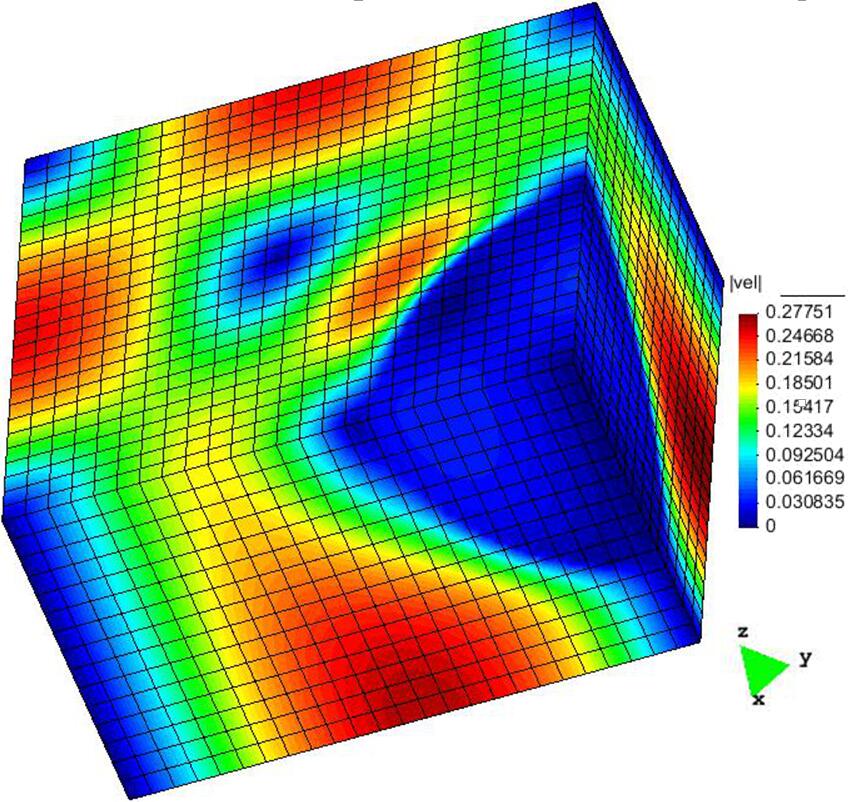}	
		\caption*{\scriptsize(a) On fluid mesh,}
	\end{minipage}
	\begin{minipage}[t]{0.5\linewidth}
		\centering  
		\includegraphics[width=2.0in,angle=0]{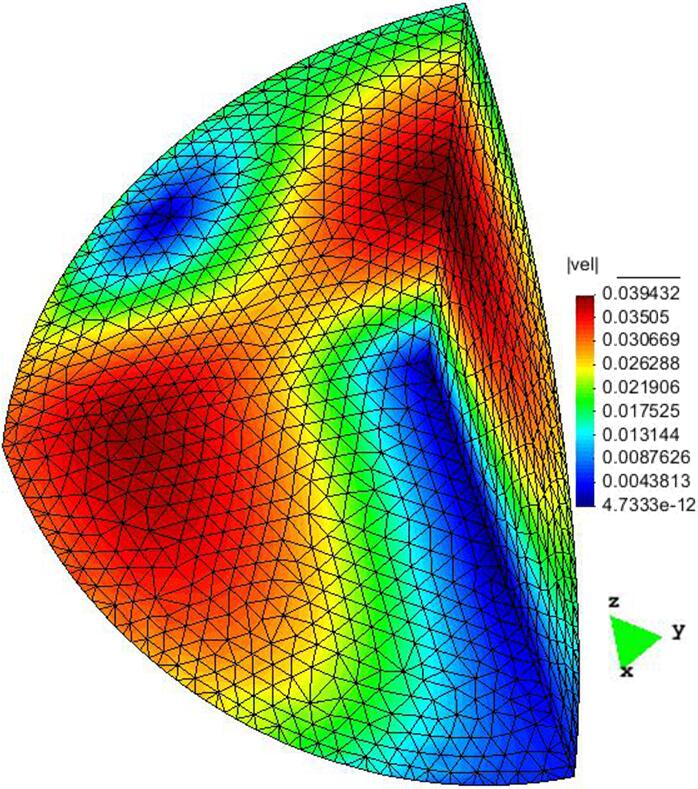}
		\caption*{\scriptsize(b) On solid mesh.}
	\end{minipage}   		
	\captionsetup{justification=centering}
	\caption {\scriptsize Velocity norm at $t=0.2$.} 
	\label{Velocity}
\end{figure}

\begin{figure}[h!]
	\begin{minipage}[t]{0.5\linewidth}
	\centering  
	\includegraphics[width=2.5in,angle=0]{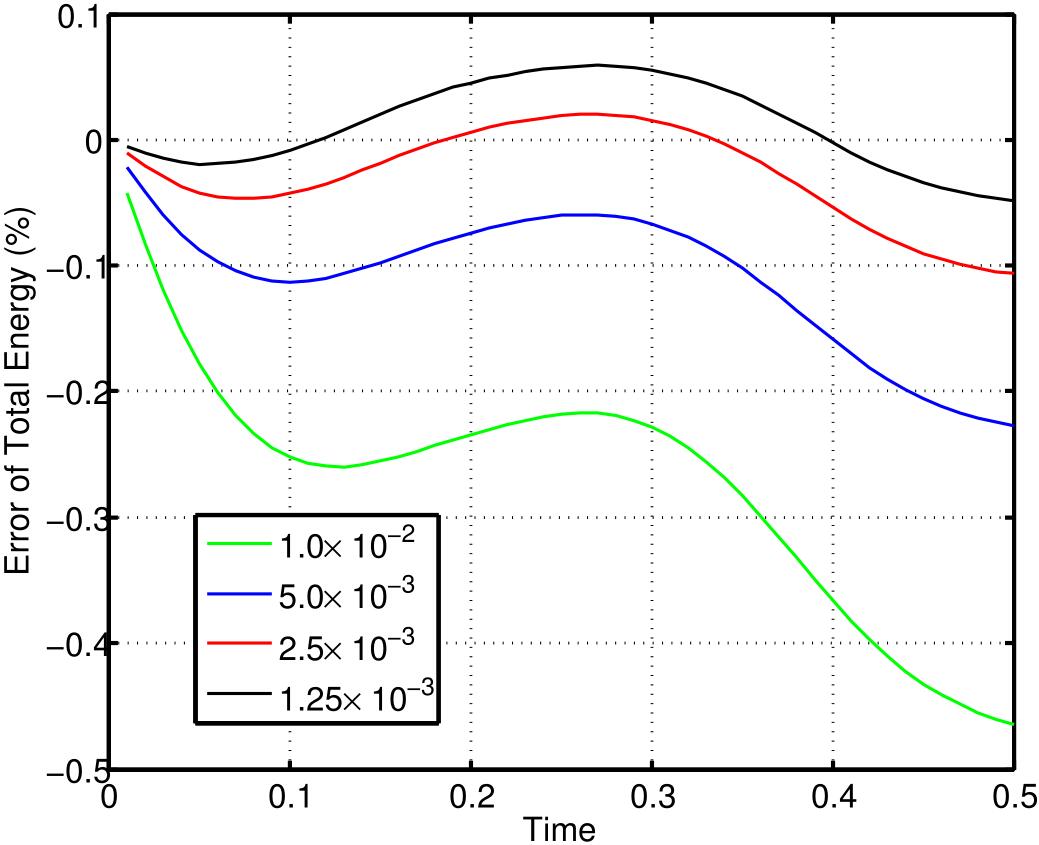}	
	\captionsetup{justification=centering}
	\caption{\scriptsize Error of total energy against time.}
	\label{Time_convergence_ball}		
	\end{minipage}
	\begin{minipage}[t]{0.5\linewidth}
	\centering  
	\includegraphics[width=2.5in,angle=0]{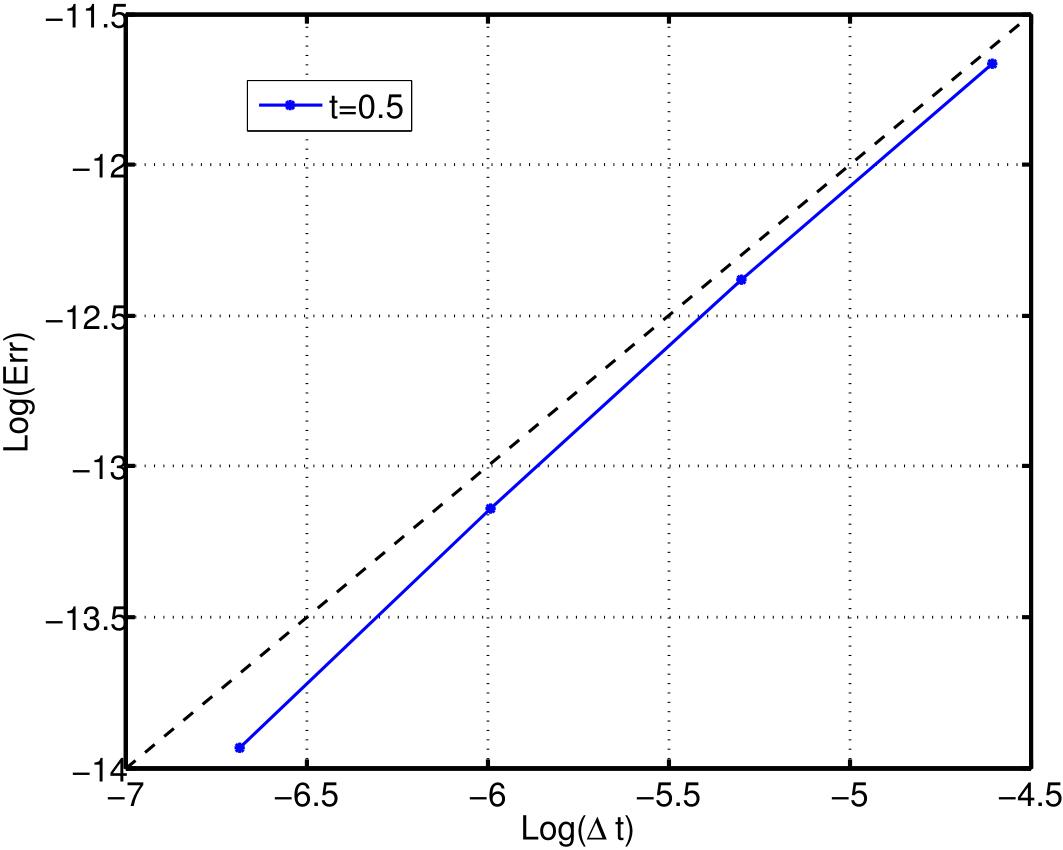}
	\caption{\scriptsize Order of time convergence.}
	\label{Order_time_convergence_ball}			
	\end{minipage}     		
\end{figure}

%%%%%%%%%%%%%%%%%%%%%%%%%%%%%%%%%%%%%%%%%%
\section{Conclusions}
\label{sec:conclusions}

In this article, the energy conservation of a new one-field fictitious domain method for fluid-structure interactions is proved after discretization in time. Furthermore, for a special case of the Stokes fluid equation and the neo-Hookean solid model, the well-posedness of the proposed scheme is analyzed and demonstrated.

We then present a selection of numerical tests that demonstrate the theoretical energy estimate in both $2$ and $3$ dimensions. Moreover, we show that the Crank-Nicolson scheme is more accurate than the backward Euler scheme in terms of mass and energy conservation, although both exhibit first order convergence in time (see \cref{reduced_order1} and \cref{reduced_order2} for the reason that Crank-Nicolson scheme is also first order).

\appendix
\section{Energy estimate of the backward Euler scheme} 
\label{appendix:backward-Euler}

\subsection{Weak form}
Using the backward Euler method, the discretized weak form corresponding to \cref{problem:continuous_weak_form} is:
\begin{problem}\label{problem_weak_after_time_discretization_backward_Euler}
For each time step, find ${\bf u}_{n+1}\in H_0^1(\Omega)^d$ and $p_{n+1} \in L_0^2(\Omega)$, such that
\begin{equation}\label{weak_form1_discretization_backward_Euler}
\begin{split}
&\rho^f\int_{\Omega}\frac{{\bf u}_{n+1}-{\bf u}_n}{\Delta t} \cdot{\bf v}d{\bf x}
+\rho^fc\left({\bf u}_{n+1},{\bf u}_{n+1},{\bf v}\right)
+\frac{\nu^f}{2}\int_{\Omega}{\rm D}{\bf u}_{n+1}:{\rm D}{\bf v}d{\bf x} \\
&-\int_{\Omega}p_{n+1}\nabla \cdot {\bf v}d{\bf x}
+\rho^{\delta}\int_{\Omega_{n+1}^s}\frac{{\bf u}_{n+1}-{\bf u}_n}{\Delta t} \cdot{\bf v}d{\bf x}
+\frac{\nu^{\delta}}{2}\int_{\Omega_{n+1}^s}{\rm D}{\bf u}_{n+1}:{\rm D}{\bf v}d{\bf x} \\
&+\int_{\Omega_{\bf X}^s}{\bf P}\left({\bf F}_{n+1}\right):\nabla_{\bf X}{\bf v}d{\bf X}
=O\left(\Delta t\right),
\end{split}
\end{equation}
$\forall{\bf v}\in H_0^1\left(\Omega\right)^d$ and 
\begin{equation}\label{weak_form2_discretization_backward_Euler}
-\int_{\Omega} q\nabla \cdot {\bf u}_{n+1}d{\bf x}=0,
\end{equation}	
$\forall q\in L^2(\Omega)$.
\end{problem}

\begin{remark}\label{update_of_solid_mesh_backward_Euler}
For backward Euler scheme, $\Omega_{n+1}^s$ is updated from $\Omega_n^s$ by
$\Omega_{n+1}^s=\left\{{\bf x}_{n+1}:{\bf x}_{n+1}={\bf x}_n+{\bf u}_{n+1}\Delta t\right\}$, for all ${\bf x}_n\in \Omega_n^s$. 
\end{remark}

\subsection{Energy conservation}
\begin{lemma}\label{energy_estimate1_lemma_backward_Euler}
Assume the solid energy function $\Psi(\cdot)\in C^1$ over the set of second order tensors, then
\begin{equation}\label{energy_estimate1_backward_Euler}
\Delta t{\bf P}\left({\bf F}_{n+1}\right):\nabla_{\bf X}{\bf u}_{n+1}
+O\left(\Delta t^2\right)
=\Psi({\bf F}_{n+1})-\Psi({\bf F}_n).
\end{equation}
\end{lemma}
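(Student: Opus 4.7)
The plan is to mimic the proof of Lemma \ref{energy_estimate1_lemma} (the Crank--Nicolson analogue), but with the Taylor expansion anchored at the endpoint $\xi=\Delta t$ instead of the midpoint $\xi=\Delta t/2$; this asymmetry is precisely what degrades the local truncation from $O(\Delta t^3)$ to $O(\Delta t^2)$.

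First I would exploit the update rule for the backward Euler solid mesh from Remark \ref{update_of_solid_mesh_backward_Euler}, namely ${\bf x}_{n+1}={\bf x}_n+{\bf u}_{n+1}\Delta t$, to write
\begin{equation*}
{\bf F}_{n+1}-{\bf F}_n
=\nabla_{\bf X}{\bf x}_{n+1}-\nabla_{\bf X}{\bf x}_n
=\Delta t\,\nabla_{\bf X}{\bf u}_{n+1}.
\end{equation*}
This is the analogue of the identity used in Lemma \ref{energy_estimate1_lemma}, but now the increment is expressed in terms of ${\bf u}_{n+1}$ rather than ${\bf u}_*$.

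Next I would introduce the scalar auxiliary function
$w(\xi)=\Psi\bigl({\bf F}_n+\xi\nabla_{\bf X}{\bf u}_{n+1}\bigr)$,
so that $w(0)=\Psi({\bf F}_n)$ and $w(\Delta t)=\Psi({\bf F}_{n+1})$ thanks to the identity above. Because $\Psi\in C^1$, a first-order Taylor expansion of $w$ around $\xi=\Delta t$ yields
\begin{equation*}
\Psi({\bf F}_{n+1})-\Psi({\bf F}_n)
=w(\Delta t)-w(0)
=\Delta t\,w^\prime(\Delta t)+O(\Delta t^2).
\end{equation*}
Applying the chain rule and using that $\frac{\partial\Psi}{\partial{\bf F}}({\bf F}_{n+1})={\bf P}({\bf F}_{n+1})$,
\begin{equation*}
w^\prime(\Delta t)
=\left.\frac{\partial\Psi}{\partial{\bf F}}\right|_{\xi=\Delta t}:\nabla_{\bf X}{\bf u}_{n+1}
={\bf P}({\bf F}_{n+1}):\nabla_{\bf X}{\bf u}_{n+1},
\end{equation*}
which on substitution delivers \cref{energy_estimate1_backward_Euler}.

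There is really no obstacle of substance here; the only delicate point worth highlighting is why the remainder is $O(\Delta t^2)$ rather than the $O(\Delta t^3)$ obtained for Crank--Nicolson. In the midpoint expansion used earlier the quadratic remainder vanishes by symmetry, whereas expanding about the endpoint $\xi=\Delta t$ leaves a genuine $\tfrac{1}{2}(\Delta t)^2 w^{\prime\prime}(\eta)$ term. One should therefore state the Taylor expansion with an explicit integral or Lagrange remainder so that the $O(\Delta t^2)$ bound is clearly uniform in $n$, which in turn is what limits the global energy error of the backward Euler scheme to first order (consistent with the discussion in Section \ref{sec:numerical_exs}).
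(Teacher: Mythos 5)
Your proof is correct and follows essentially the same route as the paper: the same auxiliary function $w(\xi)=\Psi\bigl({\bf F}_n+\xi\nabla_{\bf X}{\bf u}_{n+1}\bigr)$, the same identity ${\bf F}_{n+1}-{\bf F}_n=\Delta t\,\nabla_{\bf X}{\bf u}_{n+1}$, and the same endpoint Taylor expansion $w(\Delta t)-w(0)=\Delta t\,w^\prime(\Delta t)+O(\Delta t^2)$ followed by the chain rule. Your closing observation about the loss of the symmetric cancellation relative to the midpoint expansion is a useful clarification but does not change the argument.
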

\begin{proof}
Let $w(\xi)
=\Psi\left({\bf F}_n+\xi\nabla_{\bf X}{\bf u}_{n+1}\right)$, and notice that
\begin{equation*}
\Delta t\nabla_{\bf X}{\bf u}_{n+1}
=\nabla_{\bf X}{\bf x}_{n+1}-\nabla_{\bf X}{\bf x}_n
={\bf F}_{n+1}-{\bf F}_n,
\end{equation*}
then,
\begin{equation*}
\begin{split}
&\Psi({\bf F}_{n+1})-\Psi({\bf F}_n) 
=w\left(\Delta t\right)-w(0)
= \Delta t w^\prime\left(\Delta t\right)
+O\left(\Delta t^2\right).\\
\end{split}
\end{equation*}
Using the chain rule, \cref{energy_estimate1_backward_Euler} holds thanks to
\begin{equation*}
w^\prime\left(\Delta t\right)
=\left.\frac{\partial\Psi}{\partial{\bf F}}\right|_{\xi=\Delta t}:\nabla_{\bf X}{\bf u}_{n+1}
={\bf P}\left({\bf F}_{n+1}\right):\nabla_{\bf X}{\bf u}_{n+1}.
\end{equation*}
\end{proof}

Similarly to \cref{lemma_convection_zero}, we have:
\begin{lemma}\label{lemma_convection_zero_discretization_in_time_euler}
If $\left({\bf u}_{n+1}, p_{n+1}\right)$ is the solution pair of \cref{problem_weak_after_time_discretization_backward_Euler}, then
\begin{equation}\label{convection_zero_discretization_in_time_euler}
\int_{\Omega}\left({\bf u}_{n+1}\cdot\nabla\right){\bf u}_{n+1}\cdot{\bf u}_{n+1}d{\bf x}=0.
\end{equation}
\end{lemma}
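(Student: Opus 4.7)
The proof is designed to be a direct analog of \cref{lemma_convection_zero}, with ${\bf u}$ replaced throughout by ${\bf u}_{n+1}$. The underlying reason this transfers without modification is that the earlier continuous argument never touched time derivatives: it used only (i) the pointwise product rule identity
$({\bf u}\cdot\nabla){\bf u}\cdot{\bf u}=\nabla({\bf u}\otimes{\bf u})\cdot{\bf u}-|{\bf u}|^2\nabla\cdot{\bf u}$, (ii) integration by parts against a homogeneous Dirichlet field, and (iii) the incompressibility constraint. All three ingredients survive the backward Euler time-discretization, since ${\bf u}_{n+1}\in H_0^1(\Omega)^d$ still vanishes on $\Gamma$ by \cref{homogeneous_boundary}, and \cref{weak_form2_discretization_backward_Euler} still encodes $\nabla\cdot{\bf u}_{n+1}=0$ weakly against arbitrary $q\in L^2(\Omega)$.

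The plan proceeds in four short steps, in order. First, apply the product-rule identity with ${\bf u}={\bf u}_{n+1}$ to split the integrand into a divergence-of-tensor term plus a $|{\bf u}_{n+1}|^2\nabla\cdot{\bf u}_{n+1}$ term. Second, integrate the divergence-of-tensor term by parts over $\Omega$ to produce a boundary integral $\int_\Gamma|{\bf u}_{n+1}|^2{\bf u}_{n+1}\cdot{\bf n}$ minus a copy of the original convective integral. Third, kill the boundary contribution using \cref{homogeneous_boundary}, and kill the $|{\bf u}_{n+1}|^2\nabla\cdot{\bf u}_{n+1}$ contribution by selecting $q=|{\bf u}_{n+1}|^2$ as the test function in \cref{weak_form2_discretization_backward_Euler}. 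Fourth, solve the resulting identity $I=-I$ to conclude $I=0$.

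The only point requiring a moment's care, rather than a genuine obstacle, is verifying that $|{\bf u}_{n+1}|^2$ is an admissible test function for \cref{weak_form2_discretization_backward_Euler}. Since ${\bf u}_{n+1}\in H_0^1(\Omega)^d$ and the Sobolev embedding $H^1(\Omega)\hookrightarrow L^4(\Omega)$ holds for $d\le 3$, we have $|{\bf u}_{n+1}|^2\in L^2(\Omega)$, so the choice is legitimate. Everything else is a literal transcription of the argument already recorded for \cref{lemma_convection_zero}, and the conclusion \cref{convection_zero_discretization_in_time_euler} follows.
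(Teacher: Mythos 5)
Your proposal is correct and follows exactly the route the paper intends: the paper states this lemma with the remark ``Similarly to Lemma \ref{lemma_convection_zero}'' and gives no separate proof, and your argument is precisely that continuous-level proof transcribed with ${\bf u}_{n+1}$ in place of ${\bf u}$ (product-rule splitting, integration by parts, vanishing boundary term, and the test function $q=|{\bf u}_{n+1}|^2$ in the weak incompressibility constraint). Your added check that $|{\bf u}_{n+1}|^2\in L^2(\Omega)$ via the embedding $H^1(\Omega)\hookrightarrow L^4(\Omega)$ for $d\le 3$ is a detail the paper glosses over, and it is a welcome one.
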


\begin{proposition} [Local Energy Conservation]\label{lec_backward_Euler}
Let $\left({\bf u}_{n+1}, p_{n+1}\right)$ be the solution pair of \cref{problem_weak_after_time_discretization_backward_Euler}, then
\begin{equation}\label{energy estimate_after_time_discretization_backward_Euler}
\begin{split}
&\frac{\rho^f}{2}\int_{\Omega}\left|{\bf u}_{n+1}\right|^2d{\bf x}
+\frac{\rho^\delta}{2}\int_{\Omega_{n+1}^s}\left|{\bf u}_{n+1}\right|^2d{\bf x}
+\int_{\Omega_{\bf X}^s}\Psi\left({\bf F}_{n+1}\right)d{\bf X}  \\
&+\frac{\Delta t\nu^f}{2}\int_{\Omega}{\rm D}{\bf u}_{n+1}:{\rm D}{\bf u}_{n+1}d{\bf x}
+\frac{\Delta t\nu^\delta}{2}\int_{\Omega_{n+1}^s}{\rm D}{\bf u}_{n+1}:{\rm D}{\bf u}_{n+1}d{\bf x} \\
&\le \frac{\rho^f}{2}\int_{\Omega}\left|{\bf u}_n\right|^2d{\bf x}
+\frac{\rho^\delta}{2}\int_{\Omega_n^s}\left|{\bf u}_n\right|^2d{\bf x}
+\int_{\Omega_{\bf X}^s}\Psi\left({\bf F}_n\right)d{\bf X}+O\left(\Delta t^2\right).
\end{split}
\end{equation}
\end{proposition}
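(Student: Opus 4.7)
The plan is to mirror the proof of \cref{lec}, substituting the backward-Euler identities for the midpoint ones. I would first take ${\bf v} = {\bf u}_{n+1}$ in \cref{weak_form1_discretization_backward_Euler} and $q = p_{n+1}$ in \cref{weak_form2_discretization_backward_Euler}, then combine the two equations. The pressure term cancels, and the convection term vanishes by \cref{lemma_convection_zero_discretization_in_time_euler}. The two viscous contributions and the hyperelastic stress contribution already appear on the left-hand side with the required sign.

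For the inertia terms, the key algebraic observation is the pointwise identity
\[
\left({\bf u}_{n+1} - {\bf u}_n\right)\cdot{\bf u}_{n+1}
 = \frac{1}{2}\left(|{\bf u}_{n+1}|^2 - |{\bf u}_n|^2\right) + \frac{1}{2}|{\bf u}_{n+1} - {\bf u}_n|^2,
\]
which, unlike the Crank-Nicolson midpoint identity, leaves a nonnegative residual $\frac{1}{2}|{\bf u}_{n+1} - {\bf u}_n|^2$. Applied to both the fluid inertia term on $\Omega$ and the solid inertia term on $\Omega_{n+1}^s$, and then dropping the nonnegative residuals (using $\rho^f \ge 0$, and $\rho^\delta \ge 0$), this is exactly what turns the equality of \cref{lec} into the inequality $\le$ claimed here; it encodes the expected numerical dissipation of the backward Euler scheme. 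The $O(\Delta t^2)$ residual on the right-hand side then arises from applying \cref{energy_estimate1_lemma_backward_Euler} to rewrite the hyperelastic term $\Delta t\,{\bf P}({\bf F}_{n+1}):\nabla_{\bf X}{\bf u}_{n+1}$ as $\Psi({\bf F}_{n+1}) - \Psi({\bf F}_n) + O(\Delta t^2)$.

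The main technical point to reconcile is the domain of integration of the solid kinetic term: the identity above produces $\int_{\Omega_{n+1}^s}|{\bf u}_n|^2 d{\bf x}$, whereas the stated inequality features $\int_{\Omega_n^s}|{\bf u}_n|^2 d{\bf x}$. Here I would invoke incompressibility of the solid, i.e.\ $J_n = J_{n+1} \equiv 1$: pulling both integrals back to the reference configuration via the Lagrangian map of \cref{update_of_solid_mesh_backward_Euler} gives
\[
\int_{\Omega_{n+1}^s}|{\bf u}_n|^2 d{\bf x}
= \int_{\Omega_{\bf X}^s}|{\bf u}_n|^2 d{\bf X}
= \int_{\Omega_n^s}|{\bf u}_n|^2 d{\bf x},
\]
where ${\bf u}_n$ is transported with the solid as a material field. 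This change-of-variables step is really the only place where any care is needed; the remaining steps are a direct re-bookkeeping of the proof of \cref{lec}, with the midpoint quantities ${\bf u}_*$ and ${\bf F}_*$ replaced by their end-of-step counterparts ${\bf u}_{n+1}$ and ${\bf F}_{n+1}$, and with the equality relaxed to an inequality to account for the dropped kinetic residuals.
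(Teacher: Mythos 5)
Your proposal is correct and follows essentially the same route as the paper: test with ${\bf v}={\bf u}_{n+1}$ and $q=p_{n+1}$, kill the convection term via \cref{lemma_convection_zero_discretization_in_time_euler}, convert the hyperelastic term with \cref{energy_estimate1_lemma_backward_Euler}, and turn the inertia terms into a telescoping difference plus a dropped nonnegative residual; your exact pointwise identity $({\bf u}_{n+1}-{\bf u}_n)\cdot{\bf u}_{n+1}=\tfrac12(|{\bf u}_{n+1}|^2-|{\bf u}_n|^2)+\tfrac12|{\bf u}_{n+1}-{\bf u}_n|^2$ is just the sharp form of the paper's Cauchy--Schwarz/Young step and makes the backward-Euler dissipation explicit. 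Your explicit handling of the $\int_{\Omega_{n+1}^s}|{\bf u}_n|^2$ versus $\int_{\Omega_n^s}|{\bf u}_n|^2$ discrepancy via incompressibility and the Lagrangian transport of \cref{update_of_solid_mesh_backward_Euler} is a detail the paper passes over silently, and is a welcome addition rather than a deviation.
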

\begin{proof}
Let ${\bf v}={\bf u}_{n+1}$ in \cref{weak_form1_discretization_backward_Euler}, $q=p_{n+1}$ in \cref{weak_form2_discretization_backward_Euler} and use \cref{lemma_convection_zero_discretization_in_time_euler}, we have:
\begin{equation}\label{one_equation_backward_Euler}
\begin{split}
&\rho^f\int_{\Omega}\left({\bf u}_{n+1}-{\bf u}_n\right)\cdot{\bf u}_{n+1}d{\bf x}
+\frac{\Delta t\nu^f}{2}\int_{\Omega}{\rm D}{\bf u}_{n+1}:{\rm D}{\bf u}_{n+1}d{\bf x}\\
&+\rho^{\delta}\int_{\Omega_{n+1}^s}\left({\bf u}_{n+1}-{\bf u}_n\right)\cdot{\bf u}_{n+1}d{\bf x}
+\frac{\Delta t\nu^{\delta}}{2}\int_{\Omega_{n+1}^s}{\rm D}{\bf u}_{n+1}:{\rm D}{\bf u}_{n+1}d{\bf x}\\
&+\Delta t\int_{\Omega_{\bf X}^s}{\bf P}\left({\bf F}_{n+1}\right):\nabla_{\bf X}{\bf u}_{n+1}d{\bf X}
=O\left(\Delta t^2\right).
\end{split}
\end{equation}
Using Cauchy-Schwarz inequality and the fact $ab\le\frac{a^2+b^2}{2}$, we have:
\begin{equation*}
\int_{\omega}{\bf u}_n\cdot{\bf u}_{n+1}d{\bf x}
\le \left\|{\bf u}_n\right\|_{0,\omega} \left\|{\bf u}_{n+1}\right\|_{0,\omega}
\le \frac{\|{\bf u}_n\|^2+\|{\bf u}_{n+1}\|^2}{2},
\end{equation*}
where $\omega=\Omega$ or $\Omega_{n+1}^s$. Substituting the above equation into \cref{one_equation_backward_Euler}, we get \cref{lec_backward_Euler} due to \cref{energy_estimate1_lemma_backward_Euler}.
\end{proof}

\begin{corollary} [Total Energy Conservation]\label{cor:tec_backward_Euler}
Let $N+1=t/\Delta t$, where $t$ is the computational time, and denote the error of total energy as:
\begin{equation}\label{energy estimate_after_time_discretization_closed_backward_Euler}
\begin{split}
&Err=\frac{\rho^f}{2}\int_{\Omega}\left|{\bf u}_{N+1}\right|^2d{\bf x}
+\frac{\rho^\delta}{2}\int_{\Omega_{N+1}^s}\left|{\bf u}_{N+1}\right|^2d{\bf x} 
+\int_{\Omega_{\bf X}^s}\Psi\left({\bf F}_{N+1}\right)d{\bf X} \\
&+\frac{\Delta t\nu^f}{2}\sum_{n=0}^{N}\int_{\Omega}{\rm D}{\bf u}_{n+1}:{\rm D}{\bf u}_{n+1}d{\bf x}
+\frac{\Delta t\nu^\delta}{2}\sum_{n=0}^{N}\int_{\Omega_{n}^s}{\rm D}{\bf u}_{n+1}:{\rm D}{\bf u}_{n+1}d{\bf x} \\
&- \frac{\rho^f}{2}\int_{\Omega}\left|{\bf u}_0\right|^2d{\bf x}
-\frac{\rho^\delta}{2}\int_{\Omega_0^s}\left|{\bf u}_0\right|^2d{\bf x}
-\int_{\Omega_{\bf X}^s}\Psi\left(\frac{\partial{\bf x}_0}{\partial {\bf X}}\right)d{\bf X},
\end{split}
\end{equation}
then,
\begin{equation*}
Err\le O(\Delta t).
\end{equation*}
\end{corollary}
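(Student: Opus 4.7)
The plan is to mirror the proof of \cref{cor:tec}, substituting the per-step \emph{inequality} \cref{energy estimate_after_time_discretization_backward_Euler} for the per-step equality that was available in the Crank--Nicolson case, and telescoping in $n$. Concretely, I would rewrite each application of \cref{lec_backward_Euler} in the schematic form ``(energy at $t_{n+1}$) + (step-$(n{+}1)$ viscous dissipation) $\le$ (energy at $t_n$) + $r_{n+1}$'', where the ``energy'' groups the three contributions $\tfrac{\rho^f}{2}\int_\Omega|{\bf u}|^2$, $\tfrac{\rho^\delta}{2}\int_{\Omega^s}|{\bf u}|^2$ and $\int_{\Omega_{\bf X}^s}\Psi({\bf F})$, and $r_{n+1}=O(\Delta t^2)$ is the local remainder produced by \cref{energy_estimate1_lemma_backward_Euler}.

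Next, I would sum these inequalities from $n=0$ to $n=N$. The kinetic and potential energy terms telescope: the solid kinetic energy $\tfrac{\rho^\delta}{2}\int_{\Omega_n^s}|{\bf u}_n|^2$ appearing on the right of the step-$(n{+}1)$ inequality is the same expression as on the left of the step-$n$ inequality, so the telescoping respects the time-dependent solid domain introduced by \cref{update_of_solid_mesh_backward_Euler}. The summed viscous contributions combine into exactly the two Riemann-like sums $\tfrac{\Delta t\nu^f}{2}\sum_{n=0}^N\int_{\Omega}{\rm D}{\bf u}_{n+1}:{\rm D}{\bf u}_{n+1}$ and $\tfrac{\Delta t\nu^\delta}{2}\sum_{n=0}^N\int_{\Omega_n^s}{\rm D}{\bf u}_{n+1}:{\rm D}{\bf u}_{n+1}$ which appear in \cref{energy estimate_after_time_discretization_closed_backward_Euler}. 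Rearranging gives $Err \le \sum_{n=0}^{N} r_{n+1}$.

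To control the cumulative residual I would reuse the bookkeeping device already employed in the proof of \cref{cor:tec}: expand $r_{n+1}=\sum_{k\ge 2} C_{n+1}^k\,\Delta t^k$ with step-dependent constants $C_{n+1}^k$, and use $\sum_{n=0}^N |C_{n+1}^k| \le (N+1)\,C_{\max}^k = (t/\Delta t)\,C_{\max}^k$, where $C_{\max}^k = \max_{n}|C_{n+1}^k|$. Substituting yields
\[
\sum_{n=0}^{N} r_{n+1} \;\le\; \sum_{k\ge 2} C_{\max}^k\, t\, \Delta t^{k-1} \;=\; O(\Delta t),
\]
which is the claimed bound.

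The main obstacle is essentially a uniformity issue: one has to justify that the per-step $O(\Delta t^2)$ constants supplied by \cref{energy_estimate1_lemma_backward_Euler} (and by the Cauchy--Schwarz step in the proof of \cref{lec_backward_Euler}) admit a uniform-in-$n$ bound, so that the summation $\sum_{n=0}^N |C_{n+1}^k| \le (t/\Delta t)\,C_{\max}^k$ is legitimate. This ultimately reduces to a $C^2$/boundedness hypothesis on $\Psi$ together with a uniform bound on $\nabla_{\bf X}{\bf u}_{n+1}$, matching the assumptions implicit in \cref{cor:tec}. No sign is lost in the process because the inequality \cref{energy estimate_after_time_discretization_backward_Euler} points in the ``dissipative'' direction expected of backward Euler.
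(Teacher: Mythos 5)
Your proposal is correct and follows essentially the same route as the paper's own proof: sum the per-step inequality of \cref{lec_backward_Euler} telescopically from $n=0$ to $N$, expand the local residual as $\sum_{k\ge 2}C_{n+1}^k\Delta t^k$, and bound the accumulated remainder by $\sum_{k\ge 2}C_{\max}^k\,t\,\Delta t^{k-1}=O(\Delta t)$. Your explicit remark on the need for a uniform-in-$n$ bound on the constants is a point the paper leaves implicit, but it does not change the argument.
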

\begin{proof}
First let $O(\Delta t^2)=\sum_{k\ge 2}C_{n+1}^k\Delta t^k$ in \cref{energy estimate_after_time_discretization_backward_Euler}, where $C_{n+1}^k$ is a constant dependent on the specific time step. Then add equation \cref{energy estimate_after_time_discretization_backward_Euler} from $n=0$ to $n=N$, we have $Err\le \sum_{k\ge 2}\Delta t^k\sum_{n=0}^{N}C_{n+1}^k$. \cref{energy estimate_after_time_discretization_closed_backward_Euler} holds due to $\sum_{n=0}^{N}C_{n+1}^k \le C_{max}^k t/\Delta t$, where $C_{max}^k=max\{C_{n+1}^k, n=0,\cdots,N\}$.
\end{proof}

%\bibliographystyle{siamplain}
%\bibliography{references}

\end{document}